\theoremstyle{plain}
\newtheorem{definition}{Definition}
\newtheorem{lemma}{Lemma}
\newtheorem{theorem}{Theorem}
\newtheorem{corollary}{Corollary}
\theoremstyle{remark}
\begin{document}

\title{Finite Element Approximation of the Modified Maxwell's Stekloff Eigenvalues}
\author{Bo Gong$^1$, Jiguang Sun$^2$, and Xinming Wu$^{3}$\\
\small{$^1$ Beijing Computational Science Research Center, Beijing 100193, China.}\\
\small{$^2$ Department of Mathematical Sciences, Michigan Technological University, Houghton, MI 49931, USA.}\\
\small{$^3$ Shanghai Key Laboratory for Contemporary Applied Mathematics,}\\
\small{School of Mathematical Sciences, Fudan University, Shanghai 200433, China.}}
\date{}
\maketitle

\begin{abstract}
The modified Maxwell's Stekloff eigenvalue problem arises recently from the inverse electromagnetic scattering theory for inhomogeneous media. This paper contains a rigorous analysis of both the eigenvalue problem and the associated source problem on Lipschitz polyhedra.  A new finite element method is proposed to compute Stekloff eigenvalues. By applying the Babu\v{s}ka-Osborn theory, we prove an error estimate without additional regularity assumptions. Numerical results are presented for validation.
%We first consider the finite element approximation of the source problem and use the discrete compactness to prove the error estimate. 
\end{abstract}

\section{Introduction}
Target signature using transmission eigenvalues \cite{Harris2014} or Stekloff eigenvalues \cite{Cakoni2016} has attracted a lot of attention in the context of non-destructive testing. 
These eigenvalues can be obtained through the scattered field and used to reconstruct the properties of the scatterer. 
Recently, \cite{Camano2017} extended the concept of Stekloff eigenvalues to the Maxwell's equations and obtained a new eigenvalue problem. 
They show that the so-called modified Maxwell's Stekloff eigenvalues can be used to detect changes in the scatterer using remote measurements. 
In this paper, we shall focus on the numerical computation of the Stekloff eigenvalues. 
% As mentioned in \cite{Camano2017}, if we have built a dictionary containing Stekloff eigenvalues of a list of scatterers, then by comparing the dictionary with the eigenvalue derived from the remote measurements, we can determine which scatter is in the field. 
Our purpose is to analyze this eigenvalue problem and propose a convergent finite element method.

There are two relevant spaces to be approximated: the curl space of the domain and the $H^1$ space of the boundary. 
We use the curl conforming edge elements \cite{Nedelec1980} for the former and the Lagrange elements for the latter.
% as the standard continuous finite elements fail to approximate it for its low regularity. 
Among the techniques used to analyze the edge elements, the discrete compactness property is a powerful one, 
which was discussed in \cite{Kikuchi1989} for the lowest-order edge element. 
It was further analyzed in \cite{Monk2003} for the Maxwell's equations with impedance boundary conditions. 
The analysis holds when the mesh is quasi-uniform on the boundary (which condition was later removed by \cite{Gatica2012}).
While for our case, due to the surface-divergence-free boundary condition, we are able to follow the argument of \cite{Monk2003} 
without the quasi-uniform assumption.
The interpolation error is also indispensable in proving the convergence. 
The standard result concerning the edge element was provided by \cite{Alonso1999}, 
which requires that both the interpolated function and its curl belong to the Sobolev space with index greater than one half. 
However, since we demand certain uniformity of the interpolation error, it would be better if no additional regularity were assumed. 
Fortunately, \cite{Bermudez2005} pointed out that the regularity for the curl of the function can actually be weakened. 
Relying on this insight, we prove the error estimate when no regularity assumption is made. 
This interpolation result among others were collected in \cite{Ciarlet2016}.
We refer the readers to  \cite{Hiptmair2002finite} and \cite{Monk2003} for comprehensive surveys on the edge elements.

To prove the error estimate for the eigenvalue problem, we follow the classical approach \cite{Babuska1991, Boffi2010, Sun2016}. 
First, we show the discrete solution operator of the source problem converges in norm to the continuous one.
Second, we estimate the convergence order of the eigenvalues by the Babu\v{s}ka-Osborn theory. 
An interesting phenomenon appears when considering the convergence of the solution operator. 
We can prove, without much efforts, that the surface-divergence-free part of the boundary error holds the same order as the error in the curl norm. 
In contrast, the boundary error will normally miss a half order. 
This fact was observed in different circumstances, e.g., \cite{Liu2019, You2019}. 
In proving the convergence order of the eigenvalue, we propose a discrete eigenvalue problem which is different from the one in \cite{Camano2017} 
such that we can directly apply the Babu\v{s}ka-Osborn theory (see \cite{Halla2019approximation} for a different framework). 
Both formulations, ours and the  one in \cite{Camano2017}, stem from the same continuous eigenvalue problem
and provide similar numerical results. Unfortunately, we are not able to prove the method in \cite{Camano2017}.
% , which may due to the fact that their definition leads to a non-surface-divergence-free image of the discrete solution operator. 
% Note that \cite{Halla2019approximation} uses a different framework to give an abstract analysis of the Galerkin approximations for this eigenvalue problem.

The modified Maxwell's Stekloff eigenvalue problem is quite new. We provide a rigorous analysis for the source problem and the eigenvalue problem 
on Lipschitz polyhedra. Then a finite element method is proposed and the error estimates are proved.
The rest of the paper is arranged as follows. In Section 2 we introduce the definitions of various Sobolev spaces and 
show the well-posedness of the Maxwell's equation with surface-divergence-free Neumann data, which is the source problem 
associated to the modified Maxwell's Stekloff eigenvalue problem.
In Section 3, we prove the discrete compactness property and obtain the error estimate for the source problem.
%Following Chapters 4 and 7 of \cite{Monk2003}, we obtain the error estimate for the source problem. 
Section 4 contains the eigenvalue problem and its finite element approximation. Using the Babu\v{s}ka-Osborn theory, we obtain the convergence order for the eigenvalues. 
In Section 5, numerical examples are presented. We make some conclusions and discuss future work in Section 6.

\section{The Source Problem}
We first introduce some preliminaries and refer the readers to \cite{Buffa2000, Hiptmair2002finite, Monk2003} for details.
Let $\Omega \subset \mathbb{R}^3$ be a simply connected bounded Lipschitz polyhedron with boundary $\Gamma$. 
Let $\boldsymbol{\nu}$ be the unit outer normal vector of $\Gamma$. 
Denote by $H^s(\Omega)$ and $H^t(\Gamma)$ the standard complex valued Sobolev spaces on $\Omega$ and $\Gamma$ for $s\in \mathbb{R}$ and $t\in [-1,1]$, respectively. 
Define
\begin{align*}
&\boldsymbol{H}^s(\Omega) := \big(H^s(\Omega)\big)^3,\; \boldsymbol{H}^t(\Gamma) := \big(H^t(\Gamma)\big)^3,\; \boldsymbol{L}^2(\Omega) := \big(L^2(\Omega)\big)^3,\; \boldsymbol{L}^2(\Gamma) := \big(L^2(\Gamma)\big)^3,
\\
&\boldsymbol{H}(\textbf{curl};\Omega) := \{\boldsymbol{u}\in \boldsymbol{L}^2(\Omega)\,| \,\textbf{curl}\,\boldsymbol{u}\in \boldsymbol{L}^2(\Omega)\},
\\
&\boldsymbol{H}_0(\textbf{curl};\Omega) := \{\boldsymbol{u}\in \boldsymbol{H}(\textbf{curl};\Omega)\,| \,\boldsymbol{\nu}\times\boldsymbol{u} = \boldsymbol{0}\; \text{on}\; \Gamma\},
\\
&\boldsymbol{H}(\text{div};\Omega) := \{\boldsymbol{u}\in \boldsymbol{L}^2(\Omega)\,| \,\text{div}\,\boldsymbol{u}\in L^2(\Omega)\},\\
&\boldsymbol{H}_0(\text{div};\Omega) := \{\boldsymbol{u}\in \boldsymbol{H}(\text{div};\Omega)\,| \,\boldsymbol{\nu}\cdot\boldsymbol{u} = 0\; \text{on}\; \Gamma\},
\\
&\boldsymbol{L}^2_t(\Gamma) := \{\boldsymbol{\mu}\in \boldsymbol{L}^2(\Gamma)\,| \, \boldsymbol{\nu}\cdot\boldsymbol{\mu} = 0\;\; \text{a.e.\! on}\; \Gamma\}.
\end{align*}
We denote the norms of $\boldsymbol{H}^s(\Omega)$ and $\boldsymbol{H}^t(\Gamma)$ respectively by $\Vert\cdot\Vert_{s,\Omega}$ and $\Vert\cdot\Vert_{t,\Gamma}$, 
and equip $\boldsymbol{H}(\textbf{curl};\Omega)$ with the norm
$\Vert \boldsymbol{u}\Vert_{\textbf{curl},\Omega}^2 := \Vert\boldsymbol{u}\Vert_{0,\Omega}^2 + \Vert\textbf{curl}\,\boldsymbol{u}\Vert_{0,\Omega}^2$.

% As the regularity result in \cite{Buffa2002} will be used in the analysis, we adopt their definition of $H^t(\Gamma)$ for $t>1$. 
Denote by $\Gamma_j$, $j = 1,\dots,J$, the boundary faces of $\Omega$. 
For $\psi \in L^2(\Gamma)$, let $\psi_j = \psi |_{\Gamma_j}$. The space $H^t(\Gamma)$ for $t>1$ is defined as \cite{Buffa2002}
\begin{align*}
H^t(\Gamma) = \{\psi \in H^1(\Gamma)\,| \,\psi_j\in H^t(\Gamma_j)\}
\end{align*}
with $\Vert \psi \Vert_{t,\Gamma}^2 := \Vert \psi \Vert_{1,\Gamma}^2 + \sum_{j=1}^J\Vert \psi_j\Vert_{t,\Gamma_j}^2$. 

Let $\nabla_{\Gamma}$, $\text{div}_{\Gamma}$, $\textbf{curl}_{\Gamma}$ and $\text{curl}_{\Gamma}$ denote, respectively, the surface gradient, surface divergence, surface vector curl and surface scalar curl. 
See, for example, \cite{Buffa2000} for their definitions. The following adjoint relations hold for $\boldsymbol{\phi} \in \boldsymbol{L}^2_t(\Gamma)$ and $\psi\in H^1(\Gamma)$:
\begin{align*}
\langle \boldsymbol{\phi}, \nabla_{\Gamma}\, \psi\rangle = -\langle \text{div}_{\Gamma}\, \boldsymbol{\phi},\psi\rangle,\quad \langle \boldsymbol{\phi},\textbf{curl}_{\Gamma}\,\psi\rangle = \langle \text{curl}_{\Gamma}\,\boldsymbol{\phi},\psi\rangle.
\end{align*}
Define the surface-divergence-free space as
\begin{align*}
&\boldsymbol{H}(\text{div}_{\Gamma}^0;\Gamma) := \{\boldsymbol{\mu}\in \boldsymbol{L}_t^2(\Gamma)\,| \,\text{div}_{\Gamma}\,\boldsymbol{\mu} = 0\},
\end{align*}
and equip $\boldsymbol{H}(\text{div}_{\Gamma}^0;\Gamma)$ with the norm $\Vert\cdot\Vert_{0,\Gamma}$.
Let 
\[
\boldsymbol{\gamma}_t : \big(C^{\infty}(\overline{\Omega})\big)^3 \rightarrow \boldsymbol{L}^2_t(\Gamma) \text{ and } 
\boldsymbol{\gamma}_T : \big(C^{\infty}(\overline{\Omega})\big)^3 \rightarrow \boldsymbol{L}^2_t(\Gamma)
\]
be the tangential operators that map $\boldsymbol{v}$ to $\boldsymbol{\nu}\times\boldsymbol{v}|_{\Gamma}$ and 
$(\boldsymbol{\nu}\times\boldsymbol{v}|_{\Gamma})\times\boldsymbol{\nu}$, respectively. 
It is well-known that $\boldsymbol{\gamma}_t$ and $\boldsymbol{\gamma}_T$ can be continuously extended to $\boldsymbol{H}(\textbf{curl};\Omega)$. 
The images $\boldsymbol{\gamma}_t(\boldsymbol{H}(\textbf{curl};\Omega))$ and 
$\boldsymbol{\gamma}_T(\boldsymbol{H}(\textbf{curl};\Omega))$ are characterized in \cite{Buffa2000} as 
$\boldsymbol{H}^{-1/2}(\text{div}_{\Gamma};\Gamma)$ and $\boldsymbol{H}^{-1/2}(\text{curl}_{\Gamma};\Gamma)$:
\begin{align*}
&\boldsymbol{H}^{-1/2}(\text{div}_{\Gamma};\Gamma) := \{\boldsymbol{\mu}\in V^{\prime}_{T} \,| \, \text{div}_{\Gamma}\,\boldsymbol{\mu} \in H^{-1/2}(\Gamma)\},
\\
&\boldsymbol{H}^{-1/2}(\text{curl}_{\Gamma};\Gamma) := \{\boldsymbol{\mu}\in V^{\prime}_{t}\,| \, \text{curl}_{\Gamma}\, \boldsymbol{\mu}\in H^{-1/2}(\Gamma)\}.
\end{align*}
Here $V_t$ and $V_T$ denote the traces of $\boldsymbol{H}^1(\Omega)$ such that $V_t = \boldsymbol{\gamma}_t(\boldsymbol{H}^1(\Omega))$ and $V_T = \boldsymbol{\gamma}_T(\boldsymbol{H}^1(\Omega))$. 
The spaces $V_t^{\prime}$ and $V_T^{\prime}$ are, respectively, the duals of $V_t$ and $V_T$ with $\boldsymbol{L}^2_t(\Gamma)$ acting as the pivot space. 
%When the boundary $\Gamma$ is smooth, the spaces $V_t^{\prime}$ and $V_T^{\prime}$ coincide and are equivalent to the tangential space $\boldsymbol{H}^{-1/2}_t(\Gamma)$.  
For the characterization of these spaces on Lipschitz polyhedra, we refer the readers to \cite{Buffa2001a}. 
Denote by  $\boldsymbol{v}_T = \boldsymbol{\gamma}_T \boldsymbol{v}$ the tangential component of $\boldsymbol{v}$.
Two useful facts are $\boldsymbol{\gamma}_T(\nabla p) = \nabla_{\Gamma} p$ for $p\in H^{1}(\Omega)$ (see Proposition 3.6 of \cite{Buffa2000}) and $\text{curl}_{\Gamma}\,\boldsymbol{u}_T = \boldsymbol{\nu}\cdot \textbf{curl}\,\boldsymbol{u}$ for $\boldsymbol{u}\in \boldsymbol{H}(\textbf{curl};\Omega)$ (see (40) of \cite{Buffa2000}).

Consider the source problem associated with the modified Maxwell's Stekloff eigenvalue problem. Given $\boldsymbol{f}\in \boldsymbol{H}(\text{div}_{\Gamma}^0;\Gamma)$, find $\boldsymbol{u}\in \boldsymbol{H}(\textbf{curl};\Omega)$ such that
\begin{align}
(\textbf{curl}\,\boldsymbol{u},\textbf{curl}\,\boldsymbol{v}) - \kappa^2(\epsilon_r \boldsymbol{u},\boldsymbol{v}) = \langle\boldsymbol{f},\boldsymbol{v}_T\rangle,\qquad\forall \boldsymbol{v}\in \boldsymbol{H}(\textbf{curl};\Omega).
\label{E:u}
\end{align}
Here $\kappa$ is the wavenumber which is real and positive and $\epsilon_r$ is the relative permittivity. 
Assume that the media is isotropic and dialectic, i.e., $\epsilon_r$ is a positive scalar function. 
In addition, we require that $\epsilon_r$ is piecewise smooth and bounded below. 
More precisely, suppose that there is a partition $\{\Omega_m\}_{m=1}^M$ of $\Omega$ satisfying 
$\overline{\Omega} = \cup_{m=1}^M\overline{\Omega}_m$, $\Omega_m\cap \Omega_n = \emptyset$ when $m\neq n$, 
and each subdomain $\Omega_m$ is connected and has a Lipschitz boundary. 
There exists a constant $\alpha > 0$ such that $\epsilon_r \in C^1(\overline{\Omega}_m)$ and $\epsilon_r \geqslant \alpha$.

For $\boldsymbol{u}, \boldsymbol{v}\in \boldsymbol{L}^2(\Omega)$ and $\boldsymbol{f}, \boldsymbol{g}\in \boldsymbol{L}^2_t(\Gamma)$, define
\begin{align*}
(\boldsymbol{u},\boldsymbol{v}) = \int_{\Omega} \boldsymbol{u}(\boldsymbol{x})\cdot \overline{\boldsymbol{v}(\boldsymbol{x})} dV(\boldsymbol{x})
\qquad \text{and}
\qquad \langle \boldsymbol{f},\boldsymbol{g}\rangle = \int_{\Gamma} \boldsymbol{f}(\boldsymbol{x})\cdot\overline{\boldsymbol{g}(\boldsymbol{x})} dA(\boldsymbol{x}).
\end{align*}
We also use $\langle\cdot,\cdot\rangle$ to denote the duality on the boundary between $\boldsymbol{H}^{-1/2}(\text{div}_{\Gamma};\Gamma)$ 
and $\boldsymbol{H}^{-1/2}(\text{curl}_{\Gamma};\Gamma)$, which is the case of  $\langle \boldsymbol{f},\boldsymbol{v}_T\rangle$ in \eqref{E:u}. 
The inclusion $\boldsymbol{H}(\text{div}_{\Gamma}^0;\Gamma) \subset \boldsymbol{H}^{-1/2}(\text{div}_{\Gamma};\Gamma)$ holds and
thus the right-hand-side of \eqref{E:u} is well-defined. Note that $\langle \boldsymbol{\mu},\boldsymbol{\zeta}\rangle$ regarded as a duality between
$\boldsymbol{H}^{-1/2}(\text{div}_{\Gamma};\Gamma)$ and $\boldsymbol{H}^{-1/2}(\text{curl}_{\Gamma};\Gamma)$ and $\langle \boldsymbol{\mu},\boldsymbol{\zeta}\rangle$ 
regarded as an inner product in $\boldsymbol{L}^2_t(\Gamma)$ coincide when 
$\boldsymbol{\mu}\in \boldsymbol{L}^2_t(\Gamma)\cap \boldsymbol{H}^{-1/2}(\text{div}_{\Gamma},\Gamma)$ and $\boldsymbol{\zeta}\in \boldsymbol{L}^2_t(\Gamma)\cap\boldsymbol{H}^{-1/2}(\text{curl}_{\Gamma},\Gamma)$.

Define a subset $\boldsymbol{\mathcal{Z}}(\Omega)$ of $\boldsymbol{H}(\textbf{curl};\Omega)$ by
\begin{align*}
\boldsymbol{\mathcal{Z}}(\Omega) = \{\boldsymbol{u}\in \boldsymbol{H}(\textbf{curl};\Omega)\;|\; b(\boldsymbol{u},q) = 0,\quad \forall q\in H^1(\Omega)\},
\end{align*}
where $b(\cdot,\cdot) : \boldsymbol{H}(\textbf{curl};\Omega)\times H^1(\Omega) \rightarrow \mathbb{C}$ is defined by
\begin{align*}
b(\boldsymbol{u},q) = (\epsilon_r \boldsymbol{u},\nabla q).
\end{align*}
Equip $\boldsymbol{\mathcal{Z}}(\Omega)$ with the norm $\Vert \cdot\Vert_{\textbf{curl},\Omega}$.
The following two lemmas are on the decomposition of $\boldsymbol{H}(\textbf{curl};\Omega)$ and the regularity of functions in $\boldsymbol{\mathcal{Z}}(\Omega)$.
\begin{lemma}
\label{L:decomp}
The space $\boldsymbol{H}(\normalfont{\textbf{curl}};\Omega)$ can be decomposed as
\begin{align*}
\boldsymbol{H}(\normalfont{\textbf{curl}};\Omega) = \boldsymbol{\mathcal{Z}}(\Omega) \oplus \nabla (H^1(\Omega)/\mathbb{C}).
\end{align*}
\end{lemma}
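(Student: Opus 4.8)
The plan is to realize the claimed decomposition as an $\epsilon_r$-weighted Helmholtz decomposition produced by a single well-posed scalar elliptic problem. First I would check that $\nabla(H^1(\Omega)/\mathbb{C})$ is a well-defined closed subspace of $\boldsymbol{H}(\textbf{curl};\Omega)$: for $p\in H^1(\Omega)$ one has $\textbf{curl}\,\nabla p = \boldsymbol{0}$, so $\nabla p\in\boldsymbol{H}(\textbf{curl};\Omega)$; since the only gradients that vanish come from constants, $p\mapsto\nabla p$ descends to a well-defined injection on $H^1(\Omega)/\mathbb{C}$, and with the Poincar\'e--Friedrichs inequality it is a topological isomorphism onto its image.

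Next, given $\boldsymbol{u}\in\boldsymbol{H}(\textbf{curl};\Omega)$, I would seek $p\in H^1(\Omega)/\mathbb{C}$ such that
\begin{align*}
(\epsilon_r\nabla p,\nabla q) = (\epsilon_r\boldsymbol{u},\nabla q),\qquad\forall q\in H^1(\Omega)/\mathbb{C}.
\end{align*}
The bilinear form $(p,q)\mapsto(\epsilon_r\nabla p,\nabla q)$ is bounded because $\epsilon_r$ is bounded above, and coercive on $H^1(\Omega)/\mathbb{C}$ because $\epsilon_r\geqslant\alpha>0$ and, on the quotient, $\Vert q\Vert_{1,\Omega}$ is controlled by $\Vert\nabla q\Vert_{0,\Omega}$; the right-hand side is a bounded linear functional that annihilates constants, hence is well defined on the quotient. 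Lax--Milgram then yields a unique $p$, depending continuously on $\boldsymbol{u}$. Setting $\boldsymbol{z}=\boldsymbol{u}-\nabla p$, the equation above reads precisely $b(\boldsymbol{z},q)=(\epsilon_r\boldsymbol{z},\nabla q)=0$ for all $q\in H^1(\Omega)$, i.e.\ $\boldsymbol{z}\in\boldsymbol{\mathcal{Z}}(\Omega)$. This produces the splitting $\boldsymbol{u}=\boldsymbol{z}+\nabla p$, with both summands depending continuously on $\boldsymbol{u}$, so the sum is topological, not merely algebraic.

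Finally I would verify directness: if $\boldsymbol{w}\in\boldsymbol{\mathcal{Z}}(\Omega)\cap\nabla(H^1(\Omega)/\mathbb{C})$, write $\boldsymbol{w}=\nabla p$ and test $b(\boldsymbol{w},q)=0$ with $q=p$ to get $(\epsilon_r\nabla p,\nabla p)=0$; since $\epsilon_r\geqslant\alpha>0$ this forces $\nabla p=\boldsymbol{0}$, hence $\boldsymbol{w}=\boldsymbol{0}$. Thus $\boldsymbol{\mathcal{Z}}(\Omega)\cap\nabla(H^1(\Omega)/\mathbb{C})=\{\boldsymbol{0}\}$ and the decomposition follows.

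I do not anticipate a genuine obstacle here; the only points requiring care are passing to the quotient space $H^1(\Omega)/\mathbb{C}$ so that the elliptic form is coercive and the source functional is well defined, and the remark that boundedness of the Lax--Milgram solution map is exactly what upgrades the algebraic direct sum to a topological direct sum.
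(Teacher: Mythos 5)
Your proposal is correct and takes essentially the same route as the paper: the paper notes that $\nabla(H^1(\Omega)/\mathbb{C})$ is closed by Poincar\'e's inequality, that $b(\boldsymbol{u},p)=(\epsilon_r\boldsymbol{u},\nabla p)$ is the $\epsilon_r$-weighted $\boldsymbol{L}^2$ inner product pairing $\boldsymbol{u}$ with $\nabla p$, and then invokes the projection theorem, of which your Lax--Milgram problem is precisely the explicit construction of the orthogonal projection onto $\nabla(H^1(\Omega)/\mathbb{C})$. The only difference is presentational: you spell out coercivity, the trivial intersection, and the continuity of the splitting, which the paper leaves implicit.
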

\begin{proof} 
By the Poincar\'{e}'s inequality, $\nabla(H^1(\Omega)/\mathbb{C})$ is a closed subspace of $\boldsymbol{H}(\textbf{curl};\Omega)$.
%\footnote
%{
%Let $\{\nabla p_n\}_n \subset \nabla(H^1(\Omega)/\mathbb{C})$ be a convergent sequence in $\boldsymbol{H}(\textbf{curl};\Omega)$, then by Poincar\'{e}'s inequality
%$\Vert p_n - p_m\Vert_{H^1(\Omega)/\mathbb{C}} \leqslant \Vert p_n - p_m - \langle p_n - p_m,1\rangle\Vert_1 
%\leqslant C\Vert \nabla p_n - \nabla p_m\Vert_0 = C\Vert \nabla p_n - \nabla p_m\Vert_{\textbf{curl}} \longrightarrow 0$,
%$\{p_n\}_n$ is convergent in $H^1(\Omega)/\mathbb{C}$. Denoting the limit by $p\in H^1(\Omega)/\mathbb{C}$, we see that $\Vert \nabla p_n - \nabla p\Vert_{\textbf{curl}} = \Vert \nabla p_n - \nabla p\Vert_0 \leqslant \Vert p_n - p\Vert_{H^1(\Omega)/\mathbb{C}} \longrightarrow 0$, thus $\nabla p \in H^1(\Omega)/\mathbb{C}$ is the limit of $\{\nabla p_n\}_n$ in $\boldsymbol{H}(\textbf{curl};\Omega)$.
%}
In addition, it is easily seen that $b(\boldsymbol{u},p)$ is an inner product of $\boldsymbol{u}$ and $\nabla p$ for $\boldsymbol{u}\in \boldsymbol{H}(\textbf{curl};\Omega)$ and $p\in H^1(\Omega)/\mathbb{C}$.
%\footnote
%{
%As $b(\boldsymbol{u},p) = a_1(\boldsymbol{u},\nabla p)$ with $a_1(\boldsymbol{u},\boldsymbol{v}) = (\textbf{curl}\,\boldsymbol{u},\textbf{curl}\,\boldsymbol{v}) + (\epsilon_r \boldsymbol{u},\boldsymbol{v})$.
%}
Thus $\boldsymbol{\mathcal{Z}}(\Omega)$ is the orthogonal complement of $\nabla(H^1(\Omega)/\mathbb{C})$. Then the decomposition is a direct consequence of the projection theorem.
\end{proof}

%Throughout of the paper, we let $C$ be a generic constant which is independent of any function or the mesh size $h$.
\begin{lemma}
\label{L:regul}
For $\boldsymbol{u}\in \boldsymbol{\mathcal{Z}}(\Omega)$, there exists $s_1, 0 < s_1 \leqslant 1/2$, such that for $0 \leqslant s < s_1$, 
it holds that $\boldsymbol{u}\in \boldsymbol{H}^{1/2+s}(\Omega)$ and $\boldsymbol{u}_T \in \boldsymbol{L}^{2}_t(\Gamma)$. Furthermore,
\begin{align*}
\Vert\boldsymbol{u}\Vert_{1/2+s,\Omega} \leqslant C\Vert \boldsymbol{u}\Vert_{\normalfont{\textbf{curl}},\Omega}\qquad\text{and}\qquad \Vert \boldsymbol{u}_T\Vert_{0,\Gamma}\leqslant C\Vert \boldsymbol{u}\Vert_{\normalfont{\textbf{curl}},\Omega}.
\end{align*}
\end{lemma}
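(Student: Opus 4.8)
The plan is to convert the constraint defining $\boldsymbol{\mathcal{Z}}(\Omega)$ into pointwise differential relations for $\boldsymbol{u}$, then deduce the interior regularity either from a known regularity result for Maxwell interface problems or, more elementarily, through a Helmholtz decomposition, and finally read off the boundary statement from a trace theorem. First I would note that testing $b(\boldsymbol{u},q)=(\epsilon_r\boldsymbol{u},\nabla q)=0$ against $q\in C_c^\infty(\Omega)$ gives $\text{div}(\epsilon_r\boldsymbol{u})=0$ in $\Omega$, so that $\epsilon_r\boldsymbol{u}\in\boldsymbol{H}(\text{div};\Omega)$, and integrating by parts against general $q\in H^1(\Omega)$ gives $\epsilon_r\boldsymbol{u}\cdot\boldsymbol{\nu}=0$ on $\Gamma$ in $H^{-1/2}(\Gamma)$. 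On each subdomain $\Omega_m$, where $\epsilon_r\in C^1(\overline{\Omega}_m)$, expanding $\text{div}(\epsilon_r\boldsymbol{u})=\epsilon_r\,\text{div}\,\boldsymbol{u}+\nabla\epsilon_r\cdot\boldsymbol{u}=0$ shows $\text{div}\,\boldsymbol{u}=-\epsilon_r^{-1}\nabla\epsilon_r\cdot\boldsymbol{u}\in L^2(\Omega_m)$ with $\Vert\text{div}\,\boldsymbol{u}\Vert_{0,\Omega_m}\leqslant C\Vert\boldsymbol{u}\Vert_{0,\Omega}$; across the interfaces it is $\epsilon_r\boldsymbol{u}\cdot\boldsymbol{\nu}$, not $\boldsymbol{u}\cdot\boldsymbol{\nu}$, that is continuous, so $\boldsymbol{u}$ need not lie in $\boldsymbol{H}(\text{div};\Omega)$ globally and we are genuinely dealing with a Maxwell transmission problem.

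With these facts in hand, one route is to quote directly the regularity theory for $\boldsymbol{H}(\textbf{curl})$ fields with piecewise smooth coefficients on Lipschitz polyhedra, which provides some $s_1\in(0,1/2]$ with $\boldsymbol{u}\in\boldsymbol{H}^{1/2+s}(\Omega)$ and $\Vert\boldsymbol{u}\Vert_{1/2+s,\Omega}\leqslant C\Vert\boldsymbol{u}\Vert_{\textbf{curl},\Omega}$ for $0\leqslant s<s_1$. A more self-contained variant is to use the $\boldsymbol{L}^2$-orthogonal Helmholtz decomposition $\boldsymbol{u}=\nabla\varphi+\boldsymbol{v}$ with $\varphi\in H^1(\Omega)/\mathbb{C}$ and $\boldsymbol{v}\in\boldsymbol{H}_0(\text{div};\Omega)$, $\text{div}\,\boldsymbol{v}=0$: since $\textbf{curl}\,\nabla\varphi=\boldsymbol{0}$ we get $\textbf{curl}\,\boldsymbol{v}=\textbf{curl}\,\boldsymbol{u}\in\boldsymbol{L}^2(\Omega)$, so $\boldsymbol{v}$ lies in $\boldsymbol{H}(\textbf{curl};\Omega)\cap\boldsymbol{H}_0(\text{div};\Omega)$, which embeds into $\boldsymbol{H}^{1/2+s}(\Omega)$ for some $s>0$ on Lipschitz polyhedra (Costabel; Amrouche, Bernardi, Dauge, Girault) with the required bound; while Step~1 shows that $\varphi$ solves the scalar elliptic transmission problem $\text{div}(\epsilon_r\nabla\varphi)=-\nabla\epsilon_r\cdot\boldsymbol{v}\in L^2(\Omega_m)$ on each $\Omega_m$, with $[\varphi]=0$ and $[\epsilon_r\nabla\varphi\cdot\boldsymbol{\nu}]=-[\epsilon_r]\,\boldsymbol{v}\cdot\boldsymbol{\nu}$ on the interfaces and homogeneous (weighted) Neumann data $\epsilon_r\nabla\varphi\cdot\boldsymbol{\nu}=0$ on $\Gamma$, whose regularity theory on Lipschitz polyhedra with piecewise $C^1$ coefficients gives $\varphi\in H^{3/2+s'}(\Omega)$, i.e.\ $\nabla\varphi\in\boldsymbol{H}^{1/2+s'}(\Omega)$, for some $s'>0$. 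Taking $s_1$ to be the smaller of $1/2$ and the two exponents produced above and adding the two pieces yields $\boldsymbol{u}\in\boldsymbol{H}^{1/2+s}(\Omega)$ for $0\leqslant s<s_1$ with $\Vert\boldsymbol{u}\Vert_{1/2+s,\Omega}\leqslant C\Vert\boldsymbol{u}\Vert_{\textbf{curl},\Omega}$. For the last assertion, since $1/2<1/2+s\leqslant1$ the trace operator maps $\boldsymbol{H}^{1/2+s}(\Omega)$ boundedly into $\boldsymbol{H}^{s}(\Gamma)\subset\boldsymbol{L}^2(\Gamma)$, and applying the bounded pointwise map $\boldsymbol{w}\mapsto(\boldsymbol{\nu}\times\boldsymbol{w})\times\boldsymbol{\nu}$ to $\boldsymbol{u}|_\Gamma$ gives $\boldsymbol{u}_T\in\boldsymbol{L}^2_t(\Gamma)$ with $\Vert\boldsymbol{u}_T\Vert_{0,\Gamma}\leqslant C\Vert\boldsymbol{u}\Vert_{1/2+s,\Omega}\leqslant C\Vert\boldsymbol{u}\Vert_{\textbf{curl},\Omega}$.

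The main obstacle is the sharp fractional regularity exponent: it is the one ingredient that cannot be produced by elementary means, since on a nonconvex Lipschitz polyhedron the edges and corners --- and, in the transmission formulation, the geometry of the interfaces together with the jump of $\epsilon_r$ --- reduce the regularity from $H^2$ (or from $H^1$ for the curl-div space) to merely $H^{1/2+s_1}$ with $s_1$ a generally unquantified positive number, capped at $1/2$; this is what must be imported from the interface-singularity literature. The only points requiring care in the elementary route are checking that the Helmholtz component $\boldsymbol{v}$ inherits the full $\textbf{curl}\,\boldsymbol{u}$ and the boundary condition $\boldsymbol{\nu}\cdot\boldsymbol{v}=0$ (so the cited vector embedding applies verbatim) and that the interface data $-[\epsilon_r]\,\boldsymbol{v}\cdot\boldsymbol{\nu}$ are regular enough --- they lie in $H^{s}$ of the interfaces because $\boldsymbol{v}\in\boldsymbol{H}^{1/2+s}(\Omega)$ --- for the scalar transmission result to apply.
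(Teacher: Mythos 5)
Your first steps coincide with the paper's: from $b(\boldsymbol{u},q)=0$ you get $\mathrm{div}(\epsilon_r\boldsymbol{u})=0$ and, via Green's formula, $\boldsymbol{\nu}\cdot(\epsilon_r\boldsymbol{u})=0$ on $\Gamma$. The paper then finishes in one stroke: it writes $\mathrm{div}\,\boldsymbol{u}=-\epsilon_r^{-1}\nabla\epsilon_r\cdot\boldsymbol{u}\in L^2(\Omega)$ and $\boldsymbol{\nu}\cdot\boldsymbol{u}=0$, so that $\boldsymbol{u}\in\boldsymbol{H}(\textbf{curl};\Omega)\cap\boldsymbol{H}_0(\mathrm{div};\Omega)$, and invokes Proposition 3.7 of Amrouche--Bernardi--Dauge--Girault (Theorem 3.50 of Monk) for the $\boldsymbol{H}^{1/2+s}$ embedding, followed by the trace theorem for $\boldsymbol{u}_T$. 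Your observation that this identity presupposes continuity of $\boldsymbol{u}\cdot\boldsymbol{\nu}$ across the subdomain interfaces (i.e.\ that $\epsilon_r$ does not actually jump) is a fair reading of a point the paper glosses over; in the no-jump case your Helmholtz detour is simply an unnecessary re-derivation of the same embedding, since $\boldsymbol{u}$ itself already lies in $\boldsymbol{H}(\textbf{curl};\Omega)\cap\boldsymbol{H}_0(\mathrm{div};\Omega)$ with $L^2$ divergence.

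The genuine gap is in the branch you added precisely to handle jumps. The claim that the scalar transmission problem for $\varphi$, with piecewise $C^1$ but discontinuous $\epsilon_r$ on a Lipschitz polyhedron, enjoys $H^{3/2+s'}$ regularity is false in general: the singular exponents at interface edges and corners are determined by the coefficient contrast and geometry, not by the smoothness of the data, and they can be arbitrarily close to $0$ (checkerboard-type configurations, high contrast), so one can only guarantee $\varphi\in H^{1+\tau}$ with $\tau>0$ possibly well below $1/2$, hence $\nabla\varphi\in\boldsymbol{H}^{\tau}$ only. The same objection applies to your route (a): the heterogeneous-coefficient embedding results (Jochmann; Bonito--Guermond--Luddens; Costabel--Dauge--Nicaise) give $\boldsymbol{u}\in\boldsymbol{H}^{\sigma}(\Omega)$ for some $\sigma>0$ that need not exceed $1/2$, so the exponent $1/2+s$ with $s\geqslant 0$ cannot be ``quoted directly'' in the discontinuous case. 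Consequently, when $\epsilon_r$ genuinely jumps your argument does not deliver the stated $\boldsymbol{H}^{1/2+s}$ bound (indeed the lemma as stated needs the no-jump reading), and when it does not jump the transmission machinery is superfluous and the paper's direct application of the Amrouche embedding plus the trace theorem is the whole proof.
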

\begin{proof}
Since $\boldsymbol{u}\in \boldsymbol{\mathcal{Z}}(\Omega)$, we have
$(\epsilon_r\boldsymbol{u},\nabla p) = 0$ for all $p \in H^1(\Omega)$. Hence $\text{div}(\epsilon_r\boldsymbol{u}) = 0$. 
Therefore $\text{div}\,\boldsymbol{u} = -\epsilon_r^{-1}\nabla\epsilon_r\cdot \boldsymbol{u}$,
%\footnote
%{
%Since $(\epsilon_r\boldsymbol{u},\nabla p) = 0$, 
%we have $(\boldsymbol{u},\nabla(\epsilon_r p)) = (\boldsymbol{u},\nabla \epsilon_r p) = (\nabla \epsilon_r \cdot\boldsymbol{u},p)$ for all $p\in H_0^1(\Omega)$. 
%Hence for all $q\in H_0^1(\Omega)$, there is $p = \epsilon_r^{-1}q\in H_0^1(\Omega)$ such that 
%$(\boldsymbol{u},\nabla(\epsilon_r p)) = (\nabla \epsilon_r \cdot\boldsymbol{u},p)$, 
%that is, $(\boldsymbol{u},\nabla q) = (\epsilon_r^{-1}\nabla\epsilon_r\cdot \boldsymbol{u},q)$. 
%Therefore, $\text{div}\,\boldsymbol{u} = -\epsilon_r^{-1}\nabla\epsilon_r\cdot \boldsymbol{u}$. 
%The assumption for $\epsilon_r$ means $\nabla \epsilon_r$ belongs to $\boldsymbol{\mathcal{C}}(\overline{\Omega})$ 
%hence at least to $\boldsymbol{L}^2(\Omega)$? \textcolor{red}{Yes.}
%} 
which yields $\Vert \text{div}\,\boldsymbol{u}\Vert_{0,\Omega} \leqslant C\Vert \boldsymbol{u}\Vert_{0,\Omega}$. On the other hand, due to the Green's formula
\begin{align*}
(\epsilon_r\boldsymbol{u},\nabla p) + (\text{div}(\epsilon_r\boldsymbol{u}),p) = \langle \boldsymbol{\nu}\cdot(\epsilon_r\boldsymbol{u}),p\rangle,\qquad \forall p\in H^1(\Omega),
\end{align*}
it holds that $\boldsymbol{\nu}\cdot(\epsilon_r\boldsymbol{u}) = 0$. Thus $\boldsymbol{\nu}\cdot\boldsymbol{u} = 0$
and $\boldsymbol{u}\in \boldsymbol{H}(\textbf{curl};\Omega) \cap \boldsymbol{H}_0(\text{div};\Omega)$. 
By Proposition 3.7 of \cite{Amrouche1998} (see also Theorem 3.50 of \cite{Monk2003}), there exists $s_{\Omega} > 0$ 
such that for all $0 \leqslant s < s_1 := \min(s_{\Omega},1/2)$, $\boldsymbol{u}\in \boldsymbol{H}^{1/2+s}(\Omega)$ and
\begin{align*}
\Vert\boldsymbol{u}\Vert_{1/2+s,\Omega} \leqslant C\Big(\Vert \textbf{curl}\,\boldsymbol{u}\Vert_{0,\Omega} + \Vert \text{div}\,\boldsymbol{u}\Vert_{0,\Omega} + \Vert \boldsymbol{u}\Vert_{0,\Omega}\Big) \leqslant C\Vert \boldsymbol{u}\Vert_{\textbf{curl},\Omega}.
\end{align*}
Due to the fact that $\boldsymbol{\nu}\cdot\boldsymbol{u} = 0$ and the trace theorem, 
$\boldsymbol{u}_T \in \boldsymbol{L}^2_t(\Gamma)$ and $\Vert\boldsymbol{u}_T\Vert_{0,\Gamma} \leqslant C\Vert\boldsymbol{u}\Vert_{1/2+s,\Omega}$. The proof is complete.
\end{proof}

Now we study the well-posedness of the source problem \eqref{E:u}. Define the sesquilinear forms 
$a : \boldsymbol{H}(\textbf{curl};\Omega)\times\boldsymbol{H}(\textbf{curl};\Omega) \rightarrow \mathbb{C}$:
\[
a(\boldsymbol{u},\boldsymbol{v}) = (\textbf{curl}\,\boldsymbol{u},\textbf{curl}\,\boldsymbol{v}) - \kappa^2(\epsilon_r\boldsymbol{u},\boldsymbol{v}),
\]
and $a_+ : \boldsymbol{H}(\textbf{curl};\Omega)\times\boldsymbol{H}(\textbf{curl};\Omega) \rightarrow \mathbb{C}$:
\[
a_+(\boldsymbol{u},\boldsymbol{v}) = (\textbf{curl}\,\boldsymbol{u},\textbf{curl}\,\boldsymbol{v}) + (\epsilon_r\boldsymbol{u},\boldsymbol{v}).
\]
We shall relate \eqref{E:u} to the problem of 
finding $\boldsymbol{u}\in \boldsymbol{L}^2(\Omega)$ such that
\begin{align}
(I+K)\boldsymbol{u} = \boldsymbol{z},
\label{E:u_K}
\end{align}
where $\boldsymbol{z}\in \boldsymbol{\mathcal{Z}}(\Omega)$ satisfies
\begin{align}
a_+(\boldsymbol{z},\boldsymbol{v}) = \langle\boldsymbol{f},\boldsymbol{v}_T\rangle,\qquad \forall \boldsymbol{v}\in \boldsymbol{\mathcal{Z}}(\Omega),
\label{E:z_Z}
\end{align}
and 
$K : \boldsymbol{L}^2(\Omega)\rightarrow \boldsymbol{\mathcal{Z}}(\Omega) \subset\subset \boldsymbol{L}^2(\Omega)$ is such that
\begin{align}
a_+(K\boldsymbol{g},\boldsymbol{v}) = -(\kappa^2+1)(\epsilon_r\boldsymbol{g},\boldsymbol{v}),\qquad \forall \boldsymbol{v}\in \boldsymbol{\mathcal{Z}}(\Omega).
\label{E:Kg_Z}
\end{align}
The following lemma states that \eqref{E:z_Z} and \eqref{E:Kg_Z} are well-posed.
\begin{lemma}
\label{L:z,Ku}
For $\boldsymbol{f}\in \boldsymbol{H}(\normalfont{\text{div}}_{\Gamma}^0;\Gamma)$ and $\boldsymbol{g}\in \boldsymbol{L}^2(\Omega)$,
there exist a unique solution $\boldsymbol{z}\in \boldsymbol{\mathcal{Z}}(\Omega)$ of \eqref{E:z_Z} and 
a unique solution $K\boldsymbol{g}\in \boldsymbol{\mathcal{Z}}(\Omega)$ of \eqref{E:Kg_Z}. Furthermore,
\begin{align*}
\Vert\boldsymbol{z}\Vert_{\normalfont{\textbf{curl}},\Omega} \leqslant C\Vert \boldsymbol{f}\Vert_{0,\Gamma}, \qquad \Vert K\boldsymbol{g}\Vert_{\normalfont{\textbf{curl}},\Omega}\leqslant C\Vert \boldsymbol{g}\Vert_{0,\Omega}.
\end{align*} 
\end{lemma}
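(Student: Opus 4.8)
The plan is to solve both \eqref{E:z_Z} and \eqref{E:Kg_Z} by the Lax--Milgram lemma applied on the Hilbert space $\boldsymbol{\mathcal{Z}}(\Omega)$ with norm $\Vert\cdot\Vert_{\textbf{curl},\Omega}$. Note first that $\boldsymbol{\mathcal{Z}}(\Omega)$ is complete: by Lemma \ref{L:decomp} it is a closed subspace of $\boldsymbol{H}(\textbf{curl};\Omega)$. For each problem I would then check the three standard hypotheses: boundedness of $a_+$, coercivity of $a_+$ on $\boldsymbol{\mathcal{Z}}(\Omega)$, and boundedness of the right-hand side functional.

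Coercivity is immediate and in fact holds on all of $\boldsymbol{H}(\textbf{curl};\Omega)$: since $\epsilon_r\geqslant\alpha>0$ a.e.,
\[
a_+(\boldsymbol{v},\boldsymbol{v}) = \Vert\textbf{curl}\,\boldsymbol{v}\Vert_{0,\Omega}^2 + (\epsilon_r\boldsymbol{v},\boldsymbol{v}) \geqslant \min(1,\alpha)\,\Vert\boldsymbol{v}\Vert_{\textbf{curl},\Omega}^2 .
\]
Boundedness of $a_+$ follows from the Cauchy--Schwarz inequality together with the fact that $\epsilon_r$ is bounded above, being $C^1$ on each of the finitely many closed subdomains $\overline{\Omega}_m$. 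For \eqref{E:Kg_Z}, the functional $\boldsymbol{v}\mapsto-(\kappa^2+1)(\epsilon_r\boldsymbol{g},\boldsymbol{v})$ is plainly bounded on $\boldsymbol{\mathcal{Z}}(\Omega)$, with norm $\leqslant C\Vert\boldsymbol{g}\Vert_{0,\Omega}$ ($C$ depending on $\kappa$ and $\sup\epsilon_r$), so Lax--Milgram gives a unique $K\boldsymbol{g}\in\boldsymbol{\mathcal{Z}}(\Omega)$ with $\Vert K\boldsymbol{g}\Vert_{\textbf{curl},\Omega}\leqslant C\Vert\boldsymbol{g}\Vert_{0,\Omega}$.

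The only step requiring care is the boundedness of $\boldsymbol{v}\mapsto\langle\boldsymbol{f},\boldsymbol{v}_T\rangle$ on $\boldsymbol{\mathcal{Z}}(\Omega)$, and here Lemma \ref{L:regul} is the crucial ingredient: for $\boldsymbol{v}\in\boldsymbol{\mathcal{Z}}(\Omega)$ the tangential trace satisfies $\boldsymbol{v}_T\in\boldsymbol{L}^2_t(\Gamma)$ with $\Vert\boldsymbol{v}_T\Vert_{0,\Gamma}\leqslant C\Vert\boldsymbol{v}\Vert_{\textbf{curl},\Omega}$. Since $\boldsymbol{f}\in\boldsymbol{H}(\text{div}_{\Gamma}^0;\Gamma)\subset\boldsymbol{L}^2_t(\Gamma)\cap\boldsymbol{H}^{-1/2}(\text{div}_{\Gamma};\Gamma)$ and $\boldsymbol{v}_T\in\boldsymbol{L}^2_t(\Gamma)\cap\boldsymbol{H}^{-1/2}(\text{curl}_{\Gamma};\Gamma)$, the duality pairing $\langle\boldsymbol{f},\boldsymbol{v}_T\rangle$ coincides with the $\boldsymbol{L}^2_t(\Gamma)$ inner product (as observed in the remark following \eqref{E:u}), whence
\[
|\langle\boldsymbol{f},\boldsymbol{v}_T\rangle| \leqslant \Vert\boldsymbol{f}\Vert_{0,\Gamma}\,\Vert\boldsymbol{v}_T\Vert_{0,\Gamma} \leqslant C\,\Vert\boldsymbol{f}\Vert_{0,\Gamma}\,\Vert\boldsymbol{v}\Vert_{\textbf{curl},\Omega}.
\]
Lax--Milgram then yields a unique $\boldsymbol{z}\in\boldsymbol{\mathcal{Z}}(\Omega)$ solving \eqref{E:z_Z} with $\Vert\boldsymbol{z}\Vert_{\textbf{curl},\Omega}\leqslant C\Vert\boldsymbol{f}\Vert_{0,\Gamma}$.

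The main obstacle, then, is precisely the treatment of the boundary term: it is essential that the test space be $\boldsymbol{\mathcal{Z}}(\Omega)$ rather than the full $\boldsymbol{H}(\textbf{curl};\Omega)$, since only there does the tangential trace gain enough regularity to be estimated in the $\boldsymbol{L}^2(\Gamma)$ norm — on the full space $\boldsymbol{v}_T$ lies merely in $\boldsymbol{H}^{-1/2}(\text{curl}_{\Gamma};\Gamma)$ and the $\boldsymbol{L}^2(\Gamma)$ norm of $\boldsymbol{f}$ would not control the pairing. Everything else is routine, and the stated stability constants follow directly from the coercivity and continuity constants above.
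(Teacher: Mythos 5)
Your proposal is correct and follows essentially the same route as the paper: coercivity and boundedness of $a_+(\cdot,\cdot)$ on $\boldsymbol{\mathcal{Z}}(\Omega)$ together with the bound $\vert\langle\boldsymbol{f},\boldsymbol{v}_T\rangle\vert\leqslant \Vert\boldsymbol{f}\Vert_{0,\Gamma}\Vert\boldsymbol{v}_T\Vert_{0,\Gamma}\leqslant C\Vert\boldsymbol{f}\Vert_{0,\Gamma}\Vert\boldsymbol{v}\Vert_{\textbf{curl},\Omega}$ from Lemma \ref{L:regul}, then Lax--Milgram. Your write-up simply spells out the details (closedness of $\boldsymbol{\mathcal{Z}}(\Omega)$, the explicit coercivity constant, and the identification of the duality pairing with the $\boldsymbol{L}^2_t(\Gamma)$ inner product) that the paper leaves implicit.
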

\begin{proof}
It is clear that $a_+(\cdot,\cdot)$ is coercive on $\boldsymbol{\mathcal{Z}}(\Omega)$ and bounded on $\boldsymbol{\mathcal{Z}}(\Omega)\times\boldsymbol{\mathcal{Z}}(\Omega)$.
Moreover, the right-hand-side of \eqref{E:z_Z}  and \eqref{E:Kg_Z} are bounded with respect to $\boldsymbol{v}$, i.e.,
\begin{align*}
\vert \langle\boldsymbol{f},\boldsymbol{v}_T\rangle\vert \leqslant \Vert\boldsymbol{f}\Vert_{0,\Gamma} \Vert\boldsymbol{v}_T\Vert_{0,\Gamma}\leqslant C\Vert \boldsymbol{f}\Vert_{0,\Gamma}\Vert \boldsymbol{v}\Vert_{\textbf{curl},\Omega}\quad\text{and}\quad \vert\! -\!(\kappa^2+1)(\epsilon_r\boldsymbol{g},\boldsymbol{v})\vert\leqslant C\Vert\boldsymbol{g}\Vert_{0,\Omega}\Vert\boldsymbol{v}\Vert_{\textbf{curl},\Omega}.
\end{align*}
Therefore the uniqueness, existence and the continuous dependence hold for $\boldsymbol{z}$ and $K\boldsymbol{g}$.
%The continuous dependence of solution on data can be derived as follows. 
%For \eqref{E:z_Z}, 
%%where Lemma \ref{L:regul} may be used to obtain $\vert\langle \boldsymbol{f},\boldsymbol{v}_T\rangle\vert \leqslant C \Vert\boldsymbol{f}\Vert_{0,\Gamma} \Vert \boldsymbol{v}\Vert_{\text{curl},\Omega}$ for $\boldsymbol{v}\in \boldsymbol{\mathcal{Z}}(\Omega)$.\footnote
%using Lemma \ref{L:regul}, we have
%\[
%\Vert \boldsymbol{z}\Vert_{\textbf{curl},\Omega}^2 \leqslant C a_+(\boldsymbol{z},\boldsymbol{z}) = C\langle \boldsymbol{f},\boldsymbol{z}_T\rangle \leqslant C\Vert \boldsymbol{f}\Vert_{0,\Gamma}\Vert \boldsymbol{z}_T\Vert_{0,\Gamma}
%\leqslant C\Vert \boldsymbol{f}\Vert_{0,\Gamma}\Vert \boldsymbol{z}_T\Vert_{s,\Gamma}\leqslant C\Vert\boldsymbol{f}\Vert_{0,\Gamma}\Vert\boldsymbol{z}\Vert_{\textbf{curl},\Omega}.
%\]
%Therefore, $\Vert\boldsymbol{z}\Vert_{\normalfont{\textbf{curl}},\Omega} \leqslant C\Vert \boldsymbol{f}\Vert_{0,\Gamma}$. 
%Similarly, for \eqref{E:Kg_Z},
%\[
%\Vert K\boldsymbol{g}\Vert_{\textbf{curl},\Omega}^2\leqslant C a_+(K\boldsymbol{g},K\boldsymbol{g}) = -C(\kappa^2+1)(\epsilon_r\boldsymbol{g},K\boldsymbol{g})\leqslant C\Vert \boldsymbol{g}\Vert_{0,\Omega}\Vert K\boldsymbol{g}\Vert_{\textbf{curl},\Omega},
%\]
%which implies $\Vert K\boldsymbol{g}\Vert_{\normalfont{\textbf{curl}},\Omega}\leqslant C\Vert \boldsymbol{g}\Vert_{0,\Omega}$.
\end{proof}

The equivalence of \eqref{E:u_K} and \eqref{E:u} is shown in the following lemma.
\begin{lemma}
\label{L:equiv}
Given $\boldsymbol{f}\in \boldsymbol{H}(\normalfont{\text{div}}_{\Gamma}^0;\Gamma)$, $\boldsymbol{u}\in \boldsymbol{H}(\normalfont{\textbf{curl}};\Omega)$ is a solution of \eqref{E:u} if and only if $\boldsymbol{u}\in \boldsymbol{L}^2(\Omega)$ is a solution of \eqref{E:u_K}.
\end{lemma}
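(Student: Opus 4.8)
The plan is to route everything through the space $\boldsymbol{\mathcal{Z}}(\Omega)$ and the orthogonal decomposition of Lemma~\ref{L:decomp}. The first step I would take is to observe that \emph{any} solution of \eqref{E:u} automatically lies in $\boldsymbol{\mathcal{Z}}(\Omega)$: testing \eqref{E:u} with $\boldsymbol{v}=\nabla q$ for $q\in H^1(\Omega)$, the curl term drops since $\textbf{curl}\,\nabla q=\boldsymbol{0}$, while on the right-hand side $\boldsymbol{\gamma}_T(\nabla q)=\nabla_{\Gamma} q$ together with the adjoint relation and $\text{div}_{\Gamma}\boldsymbol{f}=0$ give $\langle\boldsymbol{f},\nabla_{\Gamma} q\rangle=-\langle\text{div}_{\Gamma}\boldsymbol{f},q|_{\Gamma}\rangle=0$. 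Hence $-\kappa^2(\epsilon_r\boldsymbol{u},\nabla q)=0$ for all $q$, and since $\kappa>0$ this is precisely $b(\boldsymbol{u},q)=0$, i.e. $\boldsymbol{u}\in\boldsymbol{\mathcal{Z}}(\Omega)$. The same computation run in reverse shows that, once $\boldsymbol{u}\in\boldsymbol{\mathcal{Z}}(\Omega)$, the identity in \eqref{E:u} holds for all $\boldsymbol{v}\in\boldsymbol{H}(\textbf{curl};\Omega)$ as soon as it holds for all $\boldsymbol{v}\in\boldsymbol{\mathcal{Z}}(\Omega)$: split $\boldsymbol{v}=\boldsymbol{v}_0+\nabla q$ via Lemma~\ref{L:decomp} and note both sides vanish on the gradient part. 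So \eqref{E:u} is equivalent to: $\boldsymbol{u}\in\boldsymbol{\mathcal{Z}}(\Omega)$ and $a(\boldsymbol{u},\boldsymbol{v})=\langle\boldsymbol{f},\boldsymbol{v}_T\rangle$ for all $\boldsymbol{v}\in\boldsymbol{\mathcal{Z}}(\Omega)$.

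For the forward implication I would start from a solution $\boldsymbol{u}$ of \eqref{E:u}, so $\boldsymbol{u}\in\boldsymbol{\mathcal{Z}}(\Omega)$ and the reduced identity above holds. Writing $a(\boldsymbol{u},\boldsymbol{v})=a_+(\boldsymbol{u},\boldsymbol{v})-(\kappa^2+1)(\epsilon_r\boldsymbol{u},\boldsymbol{v})$ and substituting the defining relations \eqref{E:z_Z} of $\boldsymbol{z}$ and \eqref{E:Kg_Z} of $K\boldsymbol{u}$, the reduced identity becomes $a_+(\boldsymbol{u}-\boldsymbol{z}+K\boldsymbol{u},\boldsymbol{v})=0$ for all $\boldsymbol{v}\in\boldsymbol{\mathcal{Z}}(\Omega)$. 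Since $\boldsymbol{u},\boldsymbol{z},K\boldsymbol{u}\in\boldsymbol{\mathcal{Z}}(\Omega)$, the element $\boldsymbol{u}-\boldsymbol{z}+K\boldsymbol{u}$ is itself an admissible test function, so the coercivity of $a_+$ on $\boldsymbol{\mathcal{Z}}(\Omega)$ (established in the proof of Lemma~\ref{L:z,Ku}) forces $\boldsymbol{u}-\boldsymbol{z}+K\boldsymbol{u}=\boldsymbol{0}$, that is $(I+K)\boldsymbol{u}=\boldsymbol{z}$, which is \eqref{E:u_K}.

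For the reverse implication, let $\boldsymbol{u}\in\boldsymbol{L}^2(\Omega)$ satisfy \eqref{E:u_K}, so $\boldsymbol{u}=\boldsymbol{z}-K\boldsymbol{u}$. Because $\boldsymbol{z}\in\boldsymbol{\mathcal{Z}}(\Omega)$ and $K$ maps into $\boldsymbol{\mathcal{Z}}(\Omega)$, one gets for free that $\boldsymbol{u}\in\boldsymbol{\mathcal{Z}}(\Omega)\subset\boldsymbol{H}(\textbf{curl};\Omega)$, hence $\boldsymbol{u}$ is admissible in \eqref{E:u}. For $\boldsymbol{v}\in\boldsymbol{\mathcal{Z}}(\Omega)$, subtracting \eqref{E:Kg_Z} (with $\boldsymbol{g}=\boldsymbol{u}$) from \eqref{E:z_Z} and using $a_+(\boldsymbol{z}-K\boldsymbol{u},\boldsymbol{v})=a_+(\boldsymbol{u},\boldsymbol{v})$ yields $a_+(\boldsymbol{u},\boldsymbol{v})-(\kappa^2+1)(\epsilon_r\boldsymbol{u},\boldsymbol{v})=\langle\boldsymbol{f},\boldsymbol{v}_T\rangle$, i.e. $a(\boldsymbol{u},\boldsymbol{v})=\langle\boldsymbol{f},\boldsymbol{v}_T\rangle$ on $\boldsymbol{\mathcal{Z}}(\Omega)$. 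By the equivalence recorded in the first paragraph (together with $\boldsymbol{u}\in\boldsymbol{\mathcal{Z}}(\Omega)$), this extends to all $\boldsymbol{v}\in\boldsymbol{H}(\textbf{curl};\Omega)$, so $\boldsymbol{u}$ solves \eqref{E:u}.

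The only genuinely delicate point I anticipate is the boundary term rather than the algebra. For a general $\boldsymbol{v}\in\boldsymbol{H}(\textbf{curl};\Omega)$ the tangential trace $\boldsymbol{v}_T$ only lies in $\boldsymbol{H}^{-1/2}(\text{curl}_{\Gamma};\Gamma)$, so $\langle\boldsymbol{f},\boldsymbol{v}_T\rangle$ must be read as the $\boldsymbol{H}^{-1/2}(\text{div}_{\Gamma};\Gamma)$--$\boldsymbol{H}^{-1/2}(\text{curl}_{\Gamma};\Gamma)$ duality; to kill it against $\boldsymbol{v}=\nabla q$ one needs the integration-by-parts identity $\langle\boldsymbol{f},\nabla_{\Gamma} q\rangle=-\langle\text{div}_{\Gamma}\boldsymbol{f},q|_{\Gamma}\rangle$ valid on Lipschitz polyhedra, together with the stated compatibility of this duality with the $\boldsymbol{L}^2_t(\Gamma)$ pairing when $\boldsymbol{f}\in\boldsymbol{L}^2_t(\Gamma)\cap\boldsymbol{H}^{-1/2}(\text{div}_{\Gamma};\Gamma)$ — which is exactly our situation since $\boldsymbol{f}\in\boldsymbol{H}(\text{div}_{\Gamma}^0;\Gamma)$. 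Once this consistency is invoked, the remainder of the argument is linear bookkeeping plus the coercivity of $a_+$ on $\boldsymbol{\mathcal{Z}}(\Omega)$.
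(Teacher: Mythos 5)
Your proposal is correct and follows essentially the same route as the paper: test \eqref{E:u} with gradients (using $\boldsymbol{\gamma}_T(\nabla q)=\nabla_\Gamma q$ and $\text{div}_\Gamma\boldsymbol{f}=0$) to place $\boldsymbol{u}$ in $\boldsymbol{\mathcal{Z}}(\Omega)$, then use the defining relations \eqref{E:z_Z}, \eqref{E:Kg_Z} and the coercivity of $a_+$ on $\boldsymbol{\mathcal{Z}}(\Omega)$ for the forward direction, and the decomposition of Lemma~\ref{L:decomp} to extend the test functions back to all of $\boldsymbol{H}(\textbf{curl};\Omega)$ for the converse. Your explicit handling of the boundary duality pairing only spells out what the paper leaves implicit via its remark on the coincidence of the duality and the $\boldsymbol{L}^2_t(\Gamma)$ pairing.
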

\begin{proof}
Let $\boldsymbol{f}\in \boldsymbol{H}(\text{div}^0_{\Gamma};\Gamma)$.
If $\boldsymbol{u}\in \boldsymbol{H}(\textbf{curl};\Omega)$ is a solution of \eqref{E:u}, 
letting $\boldsymbol{v} = \nabla p$ in \eqref{E:u}, we have $(\epsilon_r\boldsymbol{u},\nabla p) = 0$ for $p\in H^1(\Omega)$. 
%\footnote
%{
%Since $C^{\infty}(\overline{\Omega})$ is dense in $H^1(\Omega)$, let $\{p^n\}\subset C^{\infty}(\overline{\Omega})$ 
%be a sequence that converges to $p$ in $H^1(\Omega)$. Then we have
%$
%|_t\!\langle \boldsymbol{f},(\nabla p)_T\rangle_T| \leqslant |\langle \boldsymbol{f},\nabla_{\Gamma} p^n\rangle_0| + |_t\!\langle\boldsymbol{f},(\nabla p)_T - (\nabla p^n)_T\rangle_T| \leqslant \Vert \boldsymbol{f}\Vert_{\boldsymbol{H}^{-1/2}(\text{div}_{\Gamma};\Gamma)}\Vert \nabla p - \nabla p^n\Vert_{\textbf{curl},\Omega} \longrightarrow 0,
%$
%which implies $\langle\boldsymbol{f},(\nabla p)_T\rangle = 0$.
%}
Thus $\boldsymbol{u}\in \boldsymbol{\mathcal{Z}}(\Omega)$.
Letting $\boldsymbol{v}\in \boldsymbol{\mathcal{Z}}(\Omega)$ in \eqref{E:u}, we obtain that
\begin{align*}
a_+(\boldsymbol{u},\boldsymbol{v}) + a_+(K\boldsymbol{u},\boldsymbol{v}) = a_+(\boldsymbol{z},\boldsymbol{v}),\qquad \forall \boldsymbol{v}\in \boldsymbol{\mathcal{Z}}(\Omega).
\end{align*}
By the coercivity of $a_+(\cdot,\cdot)$, $\boldsymbol{u}$ satisfies \eqref{E:u_K}. 

Conversely, if $\boldsymbol{u}\in \boldsymbol{L}^2(\Omega)$ is a solution of \eqref{E:u_K}, then it holds that $\boldsymbol{u} = \boldsymbol{z} - K\boldsymbol{u}$. 
This implies that $\boldsymbol{u}$ actually belongs to $\boldsymbol{\mathcal{Z}}(\Omega)$. 
For all $\boldsymbol{v}\in \boldsymbol{H}(\textbf{curl};\Omega)$, due to Lemma \ref{L:decomp}, 
there exist $\boldsymbol{v}_0\in \boldsymbol{\mathcal{Z}}(\Omega)$ and $p\in H^1(\Omega)$ such that $\boldsymbol{v} = \boldsymbol{v}_0 + \nabla p$. 
Since $\boldsymbol{u}$ solves \eqref{E:u_K}, we have that
\begin{align*}
a(\boldsymbol{u},\boldsymbol{v}) = a(\boldsymbol{u},\boldsymbol{v}_0) = a_+((I+K)\boldsymbol{u},\boldsymbol{v}_0) = a_+(\boldsymbol{z},\boldsymbol{v}_0) = \langle\boldsymbol{f},(\boldsymbol{v}_0)_T\rangle = \langle \boldsymbol{f},\boldsymbol{v}_T\rangle,\quad \forall \boldsymbol{v}\in \boldsymbol{H}(\textbf{curl};\Omega),
\end{align*}
i.e., $\boldsymbol{u}$ is a solution of \eqref{E:u}.
\end{proof}  
%Assuming the uniqueness of the problem,
%we can claim that there exists a solution of \eqref{E:u_K}, thus of \eqref{E:u} as well.
%\begin{assumption}
%\label{A:kappa}
%$\kappa^2$ is not an Maxwell eigenvalue with homogeneous Neumann boundary condition,\footnote
%{
%Do these eigenvalues exist? Is ``Neumann boundary condition'' the correct term?
%} 
%that is, the following equation only has trivial solution
%\begin{align*}
%{\normalfont\textbf{curl}}\,{\normalfont\textbf{curl}}\,\boldsymbol{u} - \kappa^2\epsilon_r\boldsymbol{u} &= \boldsymbol{0},\qquad\quad \text{in}\; \Omega,
%\\
%\boldsymbol{\nu}\times{\normalfont\textbf{curl}}\,\boldsymbol{u} &= \boldsymbol{0},\qquad\quad \text{on}\;\Gamma.
%\end{align*}
%\end{assumption}
\begin{definition} We call $\kappa^2$ a Neumann eigenvalue of the Maxwell's equation if there exists a non-trivial function $\boldsymbol{u}$ such that
\begin{align*}
{\normalfont\textbf{curl}}\,{\normalfont\textbf{curl}}\,\boldsymbol{u} - \kappa^2\epsilon_r\boldsymbol{u} &= \boldsymbol{0},\qquad\quad \text{in}\; \Omega,
\\
\boldsymbol{\nu}\times{\normalfont\textbf{curl}}\,\boldsymbol{u} &= \boldsymbol{0},\qquad\quad \text{on}\;\Gamma.
\end{align*}
\end{definition}
In the rest of the paper, we assume that $\kappa^2$ is not a Neumann eigenvalue.
The following lemma shows the well-posedness for the source problem.
\begin{lemma}
\label{L:K}
The operator $K : \boldsymbol{L}^2(\Omega)\rightarrow \boldsymbol{L}^2(\Omega)$ is compact. 
Given $\boldsymbol{z}\in \boldsymbol{L}^2(\Omega)$, there exists a unique solution $\boldsymbol{u}\in \boldsymbol{L}^2(\Omega)$ of \eqref{E:u_K}, 
which depends continuously on $\boldsymbol{z}$. Furthermore, \eqref{E:u} has a unique solution $\boldsymbol{u}\in \boldsymbol{\mathcal{Z}}(\Omega)$ satisfying
\begin{align}\label{ucurlf0}
\Vert\boldsymbol{u}\Vert_{\normalfont{\textbf{curl}},\Omega} \leqslant C\Vert\boldsymbol{f}\Vert_{0,\Gamma}.
\end{align}
\end{lemma}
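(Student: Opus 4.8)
The plan is to read \eqref{E:u_K} as a compact perturbation of the identity on $\boldsymbol{L}^2(\Omega)$, apply the Fredholm alternative, use the hypothesis that $\kappa^2$ is not a Neumann eigenvalue to rule out a nontrivial kernel, and then transfer the conclusions back to \eqref{E:u} through Lemmas \ref{L:z,Ku} and \ref{L:equiv}.

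First I would establish that $K$ is compact. By Lemma \ref{L:z,Ku}, $K$ is bounded from $\boldsymbol{L}^2(\Omega)$ into $\boldsymbol{\mathcal{Z}}(\Omega)$ equipped with $\Vert\cdot\Vert_{\textbf{curl},\Omega}$. By Lemma \ref{L:regul}, fixing any $s\in(0,s_1)$, every $\boldsymbol{w}\in\boldsymbol{\mathcal{Z}}(\Omega)$ lies in $\boldsymbol{H}^{1/2+s}(\Omega)$ with $\Vert\boldsymbol{w}\Vert_{1/2+s,\Omega}\leqslant C\Vert\boldsymbol{w}\Vert_{\textbf{curl},\Omega}$, and $\boldsymbol{H}^{1/2+s}(\Omega)\hookrightarrow\boldsymbol{L}^2(\Omega)$ is compact by Rellich--Kondrachov. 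Hence $\boldsymbol{\mathcal{Z}}(\Omega)\subset\subset\boldsymbol{L}^2(\Omega)$, and $K:\boldsymbol{L}^2(\Omega)\to\boldsymbol{L}^2(\Omega)$, being the composition of a bounded map with a compact embedding, is compact.

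Next I would invoke the Fredholm alternative, which reduces the bounded invertibility of $I+K$ on $\boldsymbol{L}^2(\Omega)$ to its injectivity. Suppose $(I+K)\boldsymbol{u}=\boldsymbol{0}$; then $\boldsymbol{u}=-K\boldsymbol{u}\in\boldsymbol{\mathcal{Z}}(\Omega)$. Taking $\boldsymbol{f}=\boldsymbol{0}$ in \eqref{E:z_Z}, coercivity of $a_+$ on $\boldsymbol{\mathcal{Z}}(\Omega)$ forces the associated $\boldsymbol{z}=\boldsymbol{0}$, so $\boldsymbol{u}$ solves \eqref{E:u_K} with right-hand side $\boldsymbol{0}$; by Lemma \ref{L:equiv}, $a(\boldsymbol{u},\boldsymbol{v})=0$ for all $\boldsymbol{v}\in\boldsymbol{H}(\textbf{curl};\Omega)$. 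Testing with $\boldsymbol{v}\in\big(C_0^\infty(\Omega)\big)^3$ gives $\textbf{curl}\,\textbf{curl}\,\boldsymbol{u}=\kappa^2\epsilon_r\boldsymbol{u}$ in the distributional sense, so $\textbf{curl}\,\textbf{curl}\,\boldsymbol{u}\in\boldsymbol{L}^2(\Omega)$ and $\textbf{curl}\,\boldsymbol{u}\in\boldsymbol{H}(\textbf{curl};\Omega)$; then Green's formula for the curl in $a(\boldsymbol{u},\boldsymbol{v})=0$, with $\boldsymbol{v}_T$ ranging over the trace space, yields $\boldsymbol{\nu}\times\textbf{curl}\,\boldsymbol{u}=\boldsymbol{0}$ on $\Gamma$. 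Thus $\boldsymbol{u}$ would be a Neumann eigenfunction for $\kappa^2$, excluded by hypothesis, so $\boldsymbol{u}=\boldsymbol{0}$. Consequently $(I+K)^{-1}$ exists and is bounded, giving a unique $\boldsymbol{u}=(I+K)^{-1}\boldsymbol{z}$ for each $\boldsymbol{z}$ with $\Vert\boldsymbol{u}\Vert_{0,\Omega}\leqslant C\Vert\boldsymbol{z}\Vert_{0,\Omega}$.

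Finally I would return to \eqref{E:u}: given $\boldsymbol{f}\in\boldsymbol{H}(\text{div}_{\Gamma}^0;\Gamma)$, Lemma \ref{L:z,Ku} supplies $\boldsymbol{z}\in\boldsymbol{\mathcal{Z}}(\Omega)$ with $\Vert\boldsymbol{z}\Vert_{\textbf{curl},\Omega}\leqslant C\Vert\boldsymbol{f}\Vert_{0,\Gamma}$, then $\boldsymbol{u}:=(I+K)^{-1}\boldsymbol{z}$ solves \eqref{E:u_K}, hence by Lemma \ref{L:equiv} is the unique solution of \eqref{E:u}; and since $\boldsymbol{u}=\boldsymbol{z}-K\boldsymbol{u}$ with $\boldsymbol{z},K\boldsymbol{u}\in\boldsymbol{\mathcal{Z}}(\Omega)$, also $\boldsymbol{u}\in\boldsymbol{\mathcal{Z}}(\Omega)$. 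For \eqref{ucurlf0}, combine $\Vert\boldsymbol{u}\Vert_{0,\Omega}\leqslant C\Vert\boldsymbol{z}\Vert_{0,\Omega}\leqslant C\Vert\boldsymbol{f}\Vert_{0,\Gamma}$ with $\Vert\boldsymbol{u}\Vert_{\textbf{curl},\Omega}\leqslant\Vert\boldsymbol{z}\Vert_{\textbf{curl},\Omega}+\Vert K\boldsymbol{u}\Vert_{\textbf{curl},\Omega}\leqslant C\Vert\boldsymbol{f}\Vert_{0,\Gamma}+C\Vert\boldsymbol{u}\Vert_{0,\Omega}\leqslant C\Vert\boldsymbol{f}\Vert_{0,\Gamma}$, using Lemma \ref{L:z,Ku}. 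The step I expect to be the main obstacle is the injectivity argument, specifically passing from the variational identity $a(\boldsymbol{u},\boldsymbol{v})=0$ on $\boldsymbol{H}(\textbf{curl};\Omega)$ to the classical Neumann system of the Definition, i.e.\ justifying the interior distributional identity and interpreting $\boldsymbol{\nu}\times\textbf{curl}\,\boldsymbol{u}=\boldsymbol{0}$ correctly in $\boldsymbol{H}^{-1/2}(\text{div}_{\Gamma};\Gamma)$; the compactness claim and the closing estimates are routine once Lemmas \ref{L:regul} and \ref{L:z,Ku} are in hand.
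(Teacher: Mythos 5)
Your proposal is correct and follows essentially the same route as the paper: compactness of $K$ from Lemma \ref{L:regul} plus the compact embedding of $\boldsymbol{H}^{1/2+s}(\Omega)$ into $\boldsymbol{L}^2(\Omega)$, injectivity of $I+K$ from the assumption that $\kappa^2$ is not a Neumann eigenvalue transferred through Lemma \ref{L:equiv}, and then the Fredholm alternative. The only cosmetic difference is at the end: you get \eqref{ucurlf0} from $\boldsymbol{u}=\boldsymbol{z}-K\boldsymbol{u}$ together with the bounds of Lemma \ref{L:z,Ku}, whereas the paper tests \eqref{E:u} with $\boldsymbol{v}=\boldsymbol{u}$ and uses a G\aa rding-type inequality combined with $\Vert\boldsymbol{u}\Vert_{0,\Omega}\leqslant C\Vert\boldsymbol{z}\Vert_{0,\Omega}$; both are fine, and your spelled-out derivation of the boundary condition $\boldsymbol{\nu}\times\textbf{curl}\,\boldsymbol{u}=\boldsymbol{0}$ simply makes explicit a step the paper leaves implicit.
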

\begin{proof}
By Lemma \ref{L:regul}, $K$ is a continuous operator from $\boldsymbol{L}^2(\Omega)$ to $\boldsymbol{H}^{1/2+s}(\Omega)$, 
which is compactly embedded in $\boldsymbol{L}^2(\Omega)$. Hence $K$ is compact. 
Since $\kappa^2$ is not a Neumann eigenvalue, we have the uniqueness for \eqref{E:u}.
%\footnote
%{
%Due to the equivalence of the weak formulation and the strong one.
%}
By Lemma \ref{L:equiv}, the uniqueness also holds for \eqref{E:u_K}.
%\footnote
%{
%If otherwise then assuming $\boldsymbol{f} = \boldsymbol{0}$ which implies $\boldsymbol{z} = \boldsymbol{0}$, we get a non-trivial solution of \eqref{E:u_K}, thus by Lemma \ref{L:equiv} there is a non-trivial solution of \eqref{E:u}, which contradicts the assumption of $\kappa^2$.
%}
Then the Fredholm alternative ensures the existence of a unique solution $\boldsymbol{u}\in \boldsymbol{L}^2(\Omega)$ of \eqref{E:u_K} and
\begin{align}
\Vert \boldsymbol{u}\Vert_{0,\Omega}\leqslant C\Vert \boldsymbol{z}\Vert_{0,\Omega}.
\label{IE:u_0}
\end{align}
By Lemma \ref{L:equiv}, $\boldsymbol{u}$ is also the solution of \eqref{E:u}. 
Taking
%\footnote
%{
%Here is another proof.
%Since the solution $\boldsymbol{u}$ of \eqref{E:u_K} belongs to $\boldsymbol{\mathcal{Z}}(\Omega)$ by Lemma \ref{L:equiv}, taking curl operator on \eqref{E:u_K} yields
%$
%\textbf{curl}\,\boldsymbol{u} = \textbf{curl}\,\boldsymbol{z} - \textbf{curl}\,K\boldsymbol{u}.
%$
%Using the continuous dependence results of Lemma \ref{L:z,Ku} together with \eqref{IE:u_0} we obtain
%\begin{align*}
%\Vert \boldsymbol{u}\Vert_{\textbf{curl},\Omega} \leqslant C\Big(\Vert\boldsymbol{z}\Vert_{0,\Omega} + \Vert \boldsymbol{f}\Vert_{0,\Gamma} + \Vert \boldsymbol{u}\Vert_{0,\Omega}\Big) \leqslant C \Vert \boldsymbol{f}\Vert_{0,\Gamma}.
%\end{align*}
%By Lemma \ref{L:equiv} this inequality also holds for the solution of \eqref{E:u}.
%}
$\boldsymbol{v} = \boldsymbol{u}$ in \eqref{E:u} and recalling that $\boldsymbol{u}\in \boldsymbol{\mathcal{Z}}(\Omega)$, we have that
\begin{align*}
\Vert\boldsymbol{u}\Vert_{\textbf{curl},\Omega}^2 - C\Vert\boldsymbol{u}\Vert_{0,\Omega}^2 \leqslant a(\boldsymbol{u},\boldsymbol{u}) = \langle\boldsymbol{f},\boldsymbol{u}_T\rangle \leqslant \Vert\boldsymbol{f}\Vert_{0,\Gamma}\Vert\boldsymbol{u}\Vert_{\textbf{curl},\Omega}.
\end{align*}
Using the above inequality and \eqref{IE:u_0}, we obtain \eqref{ucurlf0}.
\end{proof}

% The following regularity results are in need for later usage.
\begin{lemma}
\label{L:regul_Ku,z}
Let $\boldsymbol{z}\in \boldsymbol{\mathcal{Z}}(\Omega)$ be the solution of \eqref{E:z_Z} for $\boldsymbol{f}\in \boldsymbol{H}(\normalfont{\text{div}}_{\Gamma}^0;\Gamma)$
and $\boldsymbol{u}\in \boldsymbol{\mathcal{Z}}(\Omega)$ be the solution of \eqref{E:u_K}. There exists $s_2$, $0<s_2\leqslant 1/2$, such that, for $0\leqslant s < \min\{s_1,s_2\}$,
\begin{align*}
\Vert K\boldsymbol{u}\Vert_{1/2+s,\Omega} + \Vert \normalfont{\textbf{curl}}\,K\boldsymbol{u}\Vert_{1/2+s,\Omega}  \leqslant C \Vert \boldsymbol{f}\Vert_{0,\Gamma}.
\end{align*}
Furthermore, $\normalfont{\textbf{curl}}\,\boldsymbol{u}\in \boldsymbol{H}^{1/2}(\Omega)$ with $\Vert \normalfont{\textbf{curl}}\,\boldsymbol{u}\Vert_{1/2,\Omega}\leqslant C\Vert \boldsymbol{f}\Vert_{0,\Gamma}$ and $\normalfont{\textbf{curl}}\,\boldsymbol{z}\in \boldsymbol{H}^{1/2}(\Omega)$ with
\begin{align*}
\Vert \boldsymbol{z}\Vert_{1/2+s,\Omega} + \Vert \normalfont{\textbf{curl}}\,\boldsymbol{z}\Vert_{1/2,\Omega}\leqslant C\Vert \boldsymbol{f}\Vert_{0,\Gamma},
\end{align*}
where $0\leqslant s< s_1$.
\end{lemma}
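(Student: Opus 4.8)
The plan is to pass from each weak equation to its strong (second-order) form, identify $\textbf{curl}\,\textbf{curl}$ of the unknown as an $\boldsymbol{L}^2$ function, read off the tangential trace of its $\textbf{curl}$, and then invoke the Sobolev embeddings for vector fields with controlled $\textbf{curl}$, $\text{div}$ and (tangential) trace on the Lipschitz polyhedron. The gain of half an order for $K\boldsymbol{u}$, versus its loss for $\boldsymbol{u}$ and $\boldsymbol{z}$, is traced to the fact that the inhomogeneous datum $\boldsymbol{f}\in\boldsymbol{L}^2_t(\Gamma)$ can be lifted into $\Omega$ only at regularity $\boldsymbol{H}^{1/2}(\Omega)$.

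\emph{The term $K\boldsymbol{u}$.} Since $K\boldsymbol{u}\in\boldsymbol{\mathcal{Z}}(\Omega)$ and $\boldsymbol{u}\in\boldsymbol{\mathcal{Z}}(\Omega)$, both sides of \eqref{E:Kg_Z} vanish on $\nabla(H^1(\Omega)/\mathbb{C})$, so by Lemma \ref{L:decomp} the identity \eqref{E:Kg_Z} holds for all $\boldsymbol{v}\in\boldsymbol{H}(\textbf{curl};\Omega)$. Testing with $\boldsymbol{v}\in(C_0^\infty(\Omega))^3$ gives $\textbf{curl}\,\textbf{curl}\,K\boldsymbol{u}=-\epsilon_r K\boldsymbol{u}-(\kappa^2+1)\epsilon_r\boldsymbol{u}\in\boldsymbol{L}^2(\Omega)$, and a Green's identity then forces $\boldsymbol{\nu}\times\textbf{curl}\,K\boldsymbol{u}=\boldsymbol{0}$ on $\Gamma$. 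As $\text{div}\,\textbf{curl}\,K\boldsymbol{u}=0$, we conclude $\textbf{curl}\,K\boldsymbol{u}\in\boldsymbol{H}_0(\textbf{curl};\Omega)\cap\boldsymbol{H}(\text{div};\Omega)$, which (by the vanishing-tangential-trace analogue of Proposition 3.7 of \cite{Amrouche1998}, see also \cite{Monk2003}) embeds continuously into $\boldsymbol{H}^{1/2+s}(\Omega)$ for $0\leqslant s<s_2:=\min(s'_\Omega,1/2)$, with $s'_\Omega>0$ depending only on $\Omega$; the bounds of Lemmas \ref{L:z,Ku} and \ref{L:K} yield $\|\textbf{curl}\,K\boldsymbol{u}\|_{1/2+s,\Omega}\leqslant C\|\boldsymbol{f}\|_{0,\Gamma}$. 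Moreover $K\boldsymbol{u}\in\boldsymbol{\mathcal{Z}}(\Omega)$, so Lemma \ref{L:regul} gives $\|K\boldsymbol{u}\|_{1/2+s,\Omega}\leqslant C\|K\boldsymbol{u}\|_{\textbf{curl},\Omega}\leqslant C\|\boldsymbol{u}\|_{0,\Omega}\leqslant C\|\boldsymbol{f}\|_{0,\Gamma}$ for $0\leqslant s<s_1$. Intersecting the two ranges of $s$ proves the first inequality.

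\emph{The term $\textbf{curl}\,\boldsymbol{u}$.} From \eqref{E:u} (valid for $\boldsymbol{u}$ by Lemmas \ref{L:equiv} and \ref{L:K}) we get $\textbf{curl}\,\textbf{curl}\,\boldsymbol{u}=\kappa^2\epsilon_r\boldsymbol{u}\in\boldsymbol{L}^2(\Omega)$ and, via Green's formula, $\boldsymbol{\nu}\times\textbf{curl}\,\boldsymbol{u}=-\boldsymbol{f}$ in $\boldsymbol{H}^{-1/2}(\text{div}_\Gamma;\Gamma)$, which by the compatibility recalled after \eqref{E:u} lies in $\boldsymbol{L}^2_t(\Gamma)$ because $\boldsymbol{f}$ does. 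Since $\text{div}_\Gamma\,\boldsymbol{f}=0$ and $\Gamma$ has no nontrivial harmonic tangential fields (by the topology of $\Omega$), the surface Hodge decomposition (see \cite{Buffa2001a}) yields a potential $\varphi\in H^1(\Gamma)$ with $\boldsymbol{f}=\textbf{curl}_\Gamma\varphi$ and $\|\varphi\|_{1,\Gamma}\leqslant C\|\boldsymbol{f}\|_{0,\Gamma}$. Let $\Phi\in H^{3/2}(\Omega)$ solve $\Delta\Phi=0$ in $\Omega$, $\Phi=\varphi$ on $\Gamma$ (the Dirichlet problem on the Lipschitz domain $\Omega$ maps $H^1(\Gamma)$ continuously into $H^{3/2}(\Omega)$). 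Then $\nabla\Phi\in\boldsymbol{H}^{1/2}(\Omega)$, $\textbf{curl}\,\nabla\Phi=\boldsymbol{0}$, $\text{div}\,\nabla\Phi=0$, and $\boldsymbol{\nu}\times\nabla\Phi|_\Gamma=\boldsymbol{\nu}\times\nabla_\Gamma\varphi=\pm\boldsymbol{f}$; choosing the sign so that $\textbf{curl}\,\boldsymbol{u}\mp\nabla\Phi$ has vanishing tangential trace, that difference lies in $\boldsymbol{H}_0(\textbf{curl};\Omega)\cap\boldsymbol{H}(\text{div};\Omega)\hookrightarrow\boldsymbol{H}^{1/2}(\Omega)$ with norm bounded by $\|\textbf{curl}\,\textbf{curl}\,\boldsymbol{u}\|_{0,\Omega}+\|\textbf{curl}\,\boldsymbol{u}\|_{0,\Omega}\leqslant C\|\boldsymbol{f}\|_{0,\Gamma}$. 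Adding back $\nabla\Phi$ (bounded in $\boldsymbol{H}^{1/2}(\Omega)$ by $C\|\Phi\|_{3/2,\Omega}\leqslant C\|\varphi\|_{1,\Gamma}\leqslant C\|\boldsymbol{f}\|_{0,\Gamma}$) gives $\textbf{curl}\,\boldsymbol{u}\in\boldsymbol{H}^{1/2}(\Omega)$ with the stated bound; exactly half an order is lost here because $\nabla\Phi$ is merely $\boldsymbol{H}^{1/2}(\Omega)$.

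\emph{The terms $\boldsymbol{z}$ and $\textbf{curl}\,\boldsymbol{z}$.} Since $\boldsymbol{z}\in\boldsymbol{\mathcal{Z}}(\Omega)$, Lemma \ref{L:regul} and Lemma \ref{L:z,Ku} give $\|\boldsymbol{z}\|_{1/2+s,\Omega}\leqslant C\|\boldsymbol{z}\|_{\textbf{curl},\Omega}\leqslant C\|\boldsymbol{f}\|_{0,\Gamma}$ for $0\leqslant s<s_1$. For its curl, use $\boldsymbol{z}=(I+K)\boldsymbol{u}$, so that $\textbf{curl}\,\boldsymbol{z}=\textbf{curl}\,\boldsymbol{u}+\textbf{curl}\,K\boldsymbol{u}\in\boldsymbol{H}^{1/2}(\Omega)+\boldsymbol{H}^{1/2+s}(\Omega)\subset\boldsymbol{H}^{1/2}(\Omega)$, each summand bounded by $C\|\boldsymbol{f}\|_{0,\Gamma}$ from the previous steps, whence $\|\textbf{curl}\,\boldsymbol{z}\|_{1/2,\Omega}\leqslant C\|\boldsymbol{f}\|_{0,\Gamma}$. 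The main obstacle is the middle step: identifying the tangential trace of $\textbf{curl}\,\boldsymbol{u}$ with $-\boldsymbol{f}$ through the duality/$\boldsymbol{L}^2_t(\Gamma)$ pairing, producing the surface potential $\varphi$ for the surface-divergence-free datum $\boldsymbol{f}$, and combining the $H^1(\Gamma)\to H^{3/2}(\Omega)$ harmonic lifting with the regularity embedding for fields of vanishing tangential trace; the remainder is bookkeeping of constants via Lemmas \ref{L:z,Ku}, \ref{L:K} and \ref{L:regul}.
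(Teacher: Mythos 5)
Your proposal is correct, and for two of the three assertions it coincides with the paper's argument: the bound for $K\boldsymbol{u}$ and $\textbf{curl}\,K\boldsymbol{u}$ (extend \eqref{E:Kg_Z} to all of $\boldsymbol{H}(\textbf{curl};\Omega)$ via Lemma \ref{L:decomp}, read off $\textbf{curl}\,\textbf{curl}\,K\boldsymbol{u}\in\boldsymbol{L}^2(\Omega)$ and $\boldsymbol{\nu}\times\textbf{curl}\,K\boldsymbol{u}=\boldsymbol{0}$, then apply Proposition 3.7 of Amrouche et al.\ to $\textbf{curl}\,K\boldsymbol{u}\in\boldsymbol{H}_0(\textbf{curl};\Omega)\cap\boldsymbol{H}(\text{div};\Omega)$ and Lemma \ref{L:regul} to $K\boldsymbol{u}$ itself) and the bound for $\boldsymbol{z}$, $\textbf{curl}\,\boldsymbol{z}$ via $\textbf{curl}\,\boldsymbol{z}=\textbf{curl}\,\boldsymbol{u}+\textbf{curl}\,K\boldsymbol{u}$ are exactly the paper's steps. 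Where you genuinely diverge is the estimate for $\textbf{curl}\,\boldsymbol{u}$: the paper disposes of it in one line by invoking Costabel's regularity theorem, which directly gives $\boldsymbol{w}\in\boldsymbol{H}^{1/2}(\Omega)$ with $\Vert\boldsymbol{w}\Vert_{1/2,\Omega}\leqslant C(\Vert\boldsymbol{w}\Vert_{0,\Omega}+\Vert\textbf{curl}\,\boldsymbol{w}\Vert_{0,\Omega}+\Vert\text{div}\,\boldsymbol{w}\Vert_{0,\Omega}+\Vert\boldsymbol{\nu}\times\boldsymbol{w}\Vert_{0,\Gamma})$ for $\boldsymbol{w}=\textbf{curl}\,\boldsymbol{u}$, whose tangential trace is $\pm\boldsymbol{f}\in\boldsymbol{L}^2_t(\Gamma)$. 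You instead exploit $\text{div}_{\Gamma}\boldsymbol{f}=0$: write $\boldsymbol{f}=\textbf{curl}_{\Gamma}\varphi$ (using that the genus-zero Lipschitz boundary carries no nontrivial harmonic tangential fields), lift $\varphi$ harmonically with the Jerison--Kenig endpoint regularity $H^1(\Gamma)\to H^{3/2}(\Omega)$, and reduce to the zero-tangential-trace embedding; modulo the sign convention (which you handle) and the trivial omission of the $\boldsymbol{L}^2$ term of the difference in the quoted bound, this is a valid re-derivation of Costabel's estimate in this special case. The paper's route is shorter and needs only the citation; yours is more self-contained in the Maxwell-regularity sense but trades that for two other nontrivial inputs (the surface Hodge decomposition with $L^2$ control on a Lipschitz polyhedron and the $H^{3/2}$ harmonic lifting), and it makes transparent exactly where the half order is lost, namely in the lifting of the $\boldsymbol{L}^2_t(\Gamma)$ datum.
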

\begin{proof}
Since $K\boldsymbol{u}\in \boldsymbol{\mathcal{Z}}(\Omega)$, by Lemmas \ref{L:regul}, \ref{L:z,Ku} and \ref{L:K}, we have for $0\leqslant s< s_1$ that
\begin{align*}
\Vert K\boldsymbol{u}\Vert_{1/2+s,\Omega}\leqslant C\Vert K\boldsymbol{u}\Vert_{\textbf{curl},\Omega} \leqslant C\Vert \boldsymbol{u}\Vert_{0,\Omega}\leqslant C\Vert \boldsymbol{f}\Vert_{0,\Gamma}.
\end{align*}
Given $\boldsymbol{v}\in \boldsymbol{H}(\textbf{curl};\Omega)$, let $\boldsymbol{v}_0\in \boldsymbol{\mathcal{Z}}(\Omega)$ and $p\in H^1(\Omega)/\mathbb{C}$ 
be such that $\boldsymbol{v} = \boldsymbol{v}_0 + \nabla p$ due to Lemma~\ref{L:decomp}. From \eqref{E:Kg_Z}, $K\boldsymbol{u}$ satisfies
\begin{align}
a_+(K\boldsymbol{u},\boldsymbol{v}) = -(\kappa^2+1)(\epsilon_r\boldsymbol{u},\boldsymbol{v}),\qquad \forall \boldsymbol{v}\in \boldsymbol{H}(\textbf{curl};\Omega).
\label{E:Ku}
\end{align}
%Letting $\boldsymbol{v}\in C_0^{\infty}(\Omega) \subset \boldsymbol{H}(\textbf{curl};\Omega)$ in \eqref{E:Ku}, we have
%\begin{align*}
%(\textbf{curl}\,K\boldsymbol{u},\textbf{curl}\,\boldsymbol{v}) = -\left((\kappa^2+1)\epsilon_r \boldsymbol{u} + \epsilon_r K\boldsymbol{u},\boldsymbol{v}\right),\qquad \forall \boldsymbol{v}\in C_0^{\infty}(\Omega),
%\end{align*}
%Then $\textbf{curl}\,\textbf{curl}\,K\boldsymbol{u} = -(\kappa^2+1)\epsilon_r \boldsymbol{u} - \epsilon_r K\boldsymbol{u}$ in the distributional sense. 
%Using the Green's formula, we obtain $\langle \boldsymbol{\nu}\times\textbf{curl}\,K\boldsymbol{u},\boldsymbol{v}\rangle = 0$ for all $\boldsymbol{v}\in \boldsymbol{H}^1(\Omega)$. 
By the Green's formula, $\boldsymbol{\nu}\times\textbf{curl}\,K\boldsymbol{u} = \boldsymbol{0}$.
Consequently, $\textbf{curl}\,K\boldsymbol{u}\in \boldsymbol{H}_0(\textbf{curl};\Omega)\cap\boldsymbol{H}(\text{div};\Omega)$. 
By Proposition 3.7 of \cite{Amrouche1998} there is $s_{\Omega}^0 > 0$ such that $\textbf{curl}\,K\boldsymbol{u}\in \boldsymbol{H}^{1/2+s}(\Omega)$ 
for $0\leqslant s < s_2 := \min(s_{\Omega}^0,1/2)$. Furthermore,
\begin{align*}
\Vert \textbf{curl}\, K\boldsymbol{u}\Vert_{1/2+s,\Omega} & \leqslant C\Big(\Vert \textbf{curl}\,K\boldsymbol{u}\Vert_{0,\Omega} + \Vert \textbf{curl}\,\textbf{curl}\,K\boldsymbol{u}\Vert_{0,\Omega}\Big)\\
&\leqslant C\Big(\Vert K\boldsymbol{u}\Vert_{\textbf{curl},\Omega} + \Vert \boldsymbol{u}\Vert_{0,\Omega}\Big) 
\leqslant C\Vert\boldsymbol{f}\Vert_{0,\Gamma}.
\end{align*}
For the term $\textbf{curl}\,\boldsymbol{u}$, we apply the regularity result in \cite{Costabel1990} to obtain that
\begin{align*}
\Vert \textbf{curl}\,\boldsymbol{u}\Vert_{1/2,\Omega}\leqslant C\Big(\Vert \textbf{curl}\,\boldsymbol{u}\Vert_{0,\Omega} + \Vert\textbf{curl}\,\textbf{curl}\,\boldsymbol{u}\Vert_{0,\Omega} + \Vert\boldsymbol{\nu}\times\textbf{curl}\,\boldsymbol{u}\Vert_{0,\Gamma}\Big)\leqslant C\Vert\boldsymbol{f}\Vert_{0,\Gamma}.
\end{align*}
Using the previous results, it holds for $0\leqslant s < s_1$ that
\begin{align*}
\Vert \boldsymbol{z}\Vert_{1/2+s,\Omega} + \Vert \normalfont{\textbf{curl}}\,\boldsymbol{z}\Vert_{1/2,\Omega}&\leqslant C\Big(\Vert\boldsymbol{z}\Vert_{\textbf{curl},\Omega} + \Vert \textbf{curl}\,\boldsymbol{u}\Vert_{1/2,\Omega} + \Vert\textbf{curl}\, K\boldsymbol{u}\Vert_{1/2,\Omega}\Big)\leqslant C\Vert \boldsymbol{f}\Vert_{0,\Gamma},
\end{align*}
where we have used $\textbf{curl}\,\boldsymbol{z} = \textbf{curl}\,\boldsymbol{u} + \textbf{curl}\,K\boldsymbol{u}$.
\end{proof}
In fact, given $\boldsymbol{f}\in \boldsymbol{H}(\text{div}^0_{\Gamma};\Gamma)$, \eqref{E:z_Z} and \eqref{E:Kg_Z} 
can also be defined on $\boldsymbol{H}(\textbf{curl};\Omega)$.
\begin{lemma}
\label{L:equiv2}
Given $\boldsymbol{f}\in \boldsymbol{H}(\normalfont{\text{div}}^0_{\Gamma};\Gamma)$ and $\boldsymbol{g}\in \boldsymbol{L}^2(\Omega)$, 
there exist a unique solution $\boldsymbol{z}\in \boldsymbol{\mathcal{Z}}(\Omega)$ of
\begin{align}
a_+(\boldsymbol{z},\boldsymbol{v}) = \langle \boldsymbol{f},\boldsymbol{v}_T\rangle,\qquad \forall \boldsymbol{v}\in \boldsymbol{H}(\normalfont{\textbf{curl}};\Omega),
\label{E:z}
\end{align}
and a unique solution $(K\boldsymbol{g},\phi)\in \boldsymbol{\mathcal{Z}}(\Omega)\times H^1(\Omega)/\mathbb{C}$ of
\begin{align}
a_+(K\boldsymbol{g},\boldsymbol{v}) + b(\boldsymbol{v},\phi) &= -(\kappa^2+1)(\epsilon_r\boldsymbol{g},\boldsymbol{v}),\qquad \forall \boldsymbol{v}\in \boldsymbol{H}(\normalfont{\textbf{curl}};\Omega),
\label{E:Kg1}
\\
b(K\boldsymbol{g},q) &= 0,\qquad\qquad\qquad\qquad\quad \forall q\in H^1(\Omega)/\mathbb{C}.
\label{E:Kg2}
\end{align}
Furthermore, $\boldsymbol{z}$ is the solution of \eqref{E:z_Z} and $K\boldsymbol{g}$ is the solution of \eqref{E:Kg_Z}.
\end{lemma}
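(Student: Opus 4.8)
The plan is to handle the two problems separately and, in each case, use the orthogonal decomposition $\boldsymbol{H}(\textbf{curl};\Omega) = \boldsymbol{\mathcal{Z}}(\Omega)\oplus\nabla(H^1(\Omega)/\mathbb{C})$ of Lemma~\ref{L:decomp} to reduce the formulation posed on $\boldsymbol{H}(\textbf{curl};\Omega)$ to the one posed on $\boldsymbol{\mathcal{Z}}(\Omega)$, whose well-posedness is already granted by Lemma~\ref{L:z,Ku}.

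For \eqref{E:z}, I would first test with $\boldsymbol{v}=\nabla p$, $p\in H^1(\Omega)/\mathbb{C}$. Since $\textbf{curl}\,\nabla p=\boldsymbol{0}$, the left-hand side reduces to $(\epsilon_r\boldsymbol{z},\nabla p)=b(\boldsymbol{z},p)$, while on the right-hand side $(\nabla p)_T=\nabla_{\Gamma}p$ by the fact $\boldsymbol{\gamma}_T(\nabla p)=\nabla_{\Gamma}p$, and $\langle\boldsymbol{f},\nabla_{\Gamma}p\rangle=0$ because $\text{div}_{\Gamma}\boldsymbol{f}=0$. Hence every solution of \eqref{E:z} lies in $\boldsymbol{\mathcal{Z}}(\Omega)$, and restricting \eqref{E:z} to test functions in $\boldsymbol{\mathcal{Z}}(\Omega)$ shows it solves \eqref{E:z_Z}; conversely, taking the unique solution $\boldsymbol{z}$ of \eqref{E:z_Z} from Lemma~\ref{L:z,Ku}, splitting an arbitrary $\boldsymbol{v}=\boldsymbol{v}_0+\nabla p$ and using $a_+(\boldsymbol{z},\nabla p)=b(\boldsymbol{z},p)=0$ together with $\langle\boldsymbol{f},(\nabla p)_T\rangle=0$ gives $a_+(\boldsymbol{z},\boldsymbol{v})=\langle\boldsymbol{f},(\boldsymbol{v}_0)_T\rangle=\langle\boldsymbol{f},\boldsymbol{v}_T\rangle$. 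This establishes existence, uniqueness (in all of $\boldsymbol{H}(\textbf{curl};\Omega)$), and the identification with \eqref{E:z_Z}.

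For \eqref{E:Kg1}--\eqref{E:Kg2}, I would apply the Brezzi theory to the saddle-point form with $a_+(\cdot,\cdot)$ on $\boldsymbol{H}(\textbf{curl};\Omega)$ and $b(\cdot,\cdot)$ on $\boldsymbol{H}(\textbf{curl};\Omega)\times H^1(\Omega)/\mathbb{C}$. The kernel of $b(\cdot,q)$ is exactly $\boldsymbol{\mathcal{Z}}(\Omega)$, on which $a_+$ is coercive (as used in the proof of Lemma~\ref{L:z,Ku}), and the inf-sup condition for $b$ follows by choosing $\boldsymbol{v}=\nabla q$, so that $b(\nabla q,q)=(\epsilon_r\nabla q,\nabla q)\geqslant\alpha\Vert\nabla q\Vert_{0,\Omega}^2$ while $\Vert\nabla q\Vert_{\textbf{curl},\Omega}=\Vert\nabla q\Vert_{0,\Omega}$, and then invoking the Poincar\'e inequality. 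Boundedness of $a_+$ and $b$ and of the right-hand side of \eqref{E:Kg1} is immediate, so Brezzi's theorem yields a unique $(K\boldsymbol{g},\phi)$. Equation \eqref{E:Kg2} says $K\boldsymbol{g}\in\boldsymbol{\mathcal{Z}}(\Omega)$, and restricting \eqref{E:Kg1} to $\boldsymbol{v}\in\boldsymbol{\mathcal{Z}}(\Omega)$, where $b(\boldsymbol{v},\phi)=0$, recovers \eqref{E:Kg_Z}; conversely, the solution $K\boldsymbol{g}$ of \eqref{E:Kg_Z} can be completed to a solution of \eqref{E:Kg1}--\eqref{E:Kg2} by noting that the residual functional $\boldsymbol{v}\mapsto a_+(K\boldsymbol{g},\boldsymbol{v})+(\kappa^2+1)(\epsilon_r\boldsymbol{g},\boldsymbol{v})$ annihilates $\boldsymbol{\mathcal{Z}}(\Omega)$, hence, by Lemma~\ref{L:decomp} and the Riesz representation theorem applied to the $\epsilon_r$-inner product $(p,q)\mapsto(\epsilon_r\nabla p,\nabla q)$ on $H^1(\Omega)/\mathbb{C}$, equals $-b(\cdot,\phi)$ for a unique $\phi$.

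I expect the only delicate point to be the vanishing of $\langle\boldsymbol{f},(\nabla p)_T\rangle$: the adjoint relation $\langle\boldsymbol{\phi},\nabla_{\Gamma}\psi\rangle=-\langle\text{div}_{\Gamma}\boldsymbol{\phi},\psi\rangle$ is stated for $\psi\in H^1(\Gamma)$, whereas the trace $p|_{\Gamma}$ only lies in $H^{1/2}(\Gamma)$, so I would argue that $\psi\mapsto\langle\boldsymbol{f},\nabla_{\Gamma}\psi\rangle$ is continuous on $H^{1/2}(\Gamma)$ (using $\boldsymbol{f}\in\boldsymbol{H}^{-1/2}(\text{div}_{\Gamma};\Gamma)$ and the boundedness of $\nabla_{\Gamma}:H^{1/2}(\Gamma)\to\boldsymbol{H}^{-1/2}(\text{curl}_{\Gamma};\Gamma)$) and vanishes on the dense subspace $H^1(\Gamma)$, hence vanishes. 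Everything else is a routine application of the decomposition lemma and standard mixed-formulation theory.
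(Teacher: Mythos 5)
Your proposal is correct and follows essentially the same route as the paper: test \eqref{E:z} with gradients and use $\operatorname{div}_{\Gamma}\boldsymbol{f}=0$ to land in $\boldsymbol{\mathcal{Z}}(\Omega)$, and treat \eqref{E:Kg1}--\eqref{E:Kg2} by the Babu\v{s}ka--Brezzi theory with the same inf-sup choice $\boldsymbol{v}=\nabla q$, then restrict to $\boldsymbol{\mathcal{Z}}(\Omega)$ to recover \eqref{E:z_Z} and \eqref{E:Kg_Z}. The only (harmless) deviation is that the paper obtains existence for \eqref{E:z} directly by Lax--Milgram, since $a_+(\cdot,\cdot)$ is coercive on all of $\boldsymbol{H}(\textbf{curl};\Omega)$, rather than reconstructing the solution from \eqref{E:z_Z} via the decomposition of Lemma \ref{L:decomp}; your density argument for $\langle\boldsymbol{f},(\nabla p)_T\rangle=0$ is a legitimate way to fill a detail the paper leaves implicit.
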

\begin{proof}
The right-hand-side of \eqref{E:z_Z} is a continuous linear functional of $\boldsymbol{v}\in \boldsymbol{H}(\textbf{curl};\Omega)$. 
%In fact, for any $\boldsymbol{v}\in \boldsymbol{H}(\textbf{curl};\Omega)$ with its decomposition $\boldsymbol{v} = \boldsymbol{v}_0 + \nabla p$ given by Lemma \ref{L:decomp}, we can show that
%\begin{align*}
%|\langle\boldsymbol{f},\boldsymbol{v}_T\rangle| = |\langle\boldsymbol{f},(\boldsymbol{v}_0)_T\rangle| \leqslant C\Vert\boldsymbol{f}\Vert_{0,\Gamma}\Vert\boldsymbol{v}_0\Vert_{\textbf{curl},\Omega} \leqslant C\Vert\boldsymbol{f}\Vert_{0,\Gamma}\Vert\boldsymbol{v}\Vert_{\textbf{curl},\Omega}.
%\end{align*}
%\footnote
%{ 
%Indeed, for any $\boldsymbol{v}\in \boldsymbol{H}(\textbf{curl};\Omega)$ with its decomposition $\boldsymbol{v} = \boldsymbol{v}_0 + \nabla p$ given by Lemma \ref{L:decomp}, we can further show that
%\begin{align*}
%|\langle\boldsymbol{f},\boldsymbol{v}_T\rangle| = |\langle\boldsymbol{f},(\boldsymbol{v}_0)_T\rangle| \leqslant C\Vert\boldsymbol{f}\Vert_{0,\Gamma}\Vert\boldsymbol{v}_0\Vert_{\textbf{curl},\Omega} \leqslant C\Vert\boldsymbol{f}\Vert_{0,\Gamma}\Vert\boldsymbol{v}\Vert_{\textbf{curl},\Omega}.
%\end{align*}
%Notice that $\boldsymbol{v}_0$ is the projection of $\boldsymbol{v}$ in $\boldsymbol{H}(\textbf{curl};\Omega)$.
%}
Then by the coercivity and boundedness of $a_+(\cdot,\cdot)$, there exists a unique solution of \eqref{E:z}. 
For any $p\in H^1(\Omega)$, taking $\boldsymbol{v} = \nabla p$ in \eqref{E:z} yields $\boldsymbol{z}\in \boldsymbol{\mathcal{Z}}(\Omega)$. 
Thus $\boldsymbol{z}$ is also the solution of \eqref{E:z_Z}. 

It is clear that $a_+(\cdot,\cdot)$ is $\boldsymbol{\mathcal{Z}}(\Omega)$-coercive and the Babu\v{s}ka-Brezzi condition holds due to
\begin{align*}
\sup_{\boldsymbol{v}\in \boldsymbol{H}(\textbf{curl};\Omega)}\frac{|b(\boldsymbol{v},q)|}{\Vert\boldsymbol{v}\Vert_{\textbf{curl},\Omega}} \geqslant \frac{|(\epsilon_r\nabla q,\nabla q)|}{\Vert\nabla q\Vert_{\textbf{curl},\Omega}} \geqslant C\Vert\nabla q\Vert_{0,\Omega} \geqslant C\Vert q\Vert_{H^1(\Omega)/\mathbb{C}},\qquad \forall q\in H^1(\Omega)/\mathbb{C}.
\end{align*}
Therefore, there is a unique solution $(K\boldsymbol{g},\phi)\in \boldsymbol{H}(\textbf{curl};\Omega)\times H^1(\Omega)/\mathbb{C}$ of \eqref{E:Kg1}-\eqref{E:Kg2}. 
Due to \eqref{E:Kg2}, $K\boldsymbol{g}\in \boldsymbol{\mathcal{Z}}(\Omega)$. 
Taking $\boldsymbol{v}\in \boldsymbol{\mathcal{Z}}(\Omega)$ in \eqref{E:Kg1}, $K\boldsymbol{g}$ satisfies \eqref{E:Kg_Z}.
\end{proof}

%%%%%%%%%%%%%%%%%%%%%%%%%%%%%%%%%%%%%%%%%%%%%%%%%%%%%%%%%%%%%%%%%%%%%%
\section{Finite Element Method for the Source Problem}
In this section, we propose a finite element method for \eqref{E:u} and prove its convergence. 
Let $\tau_h$ be a regular tetrahedral mesh for $\Omega$ with size $h$. 
Since $\Omega$ is a polyhedron, the faces of $\tau_h$ on $\Gamma$ induce a triangular mesh for $\Gamma$.
%We make the following assumptions for the mesh.
%\begin{assumption}
%\label{A:mesh}
%The mesh $\{\tau_h\}_h$ is regular. More precisely, for an element $K\in \tau_h$, denoting by $h_K$ the diameter of the smallest sphere containing $\overline{K}$, by $\rho_K$ the diameter of the largest sphere contained in $\overline{K}$, we assume there are constants $C, h_0 > 0$ such that
%\begin{align*}
%\max_{K\in \tau_h} \frac{h_K}{\rho_K} \leqslant C,\qquad \forall \tau_h,\; 0< h \leqslant h_0.
%\end{align*}
%\end{assumption}
We use the notations in Chapter 5 of \cite{Monk2003} to denote by $W_h\subset \boldsymbol{H}(\text{div},\Omega)$ 
the divergence-conforming finite element space of degree $k$, by $V_h\subset \boldsymbol{H}(\textbf{curl};\Omega)$ the curl-conforming finite element space of degree $k$, 
and by $U_h\subset H^1(\Omega)$ the $H^1$-conforming finite element space of degree $k$. 
We shall mainly discuss the case when $k = 1$, though the analysis extends to $k > 1$ if higher regularity of the solution is assumed. 

Denote by $\pi_h^1 : \boldsymbol{H}(\textbf{curl};\Omega)\supset \mathcal{V} \rightarrow V_h$ and 
$\pi_h^2 : \boldsymbol{H}(\text{div};\Omega) \supset \mathcal{W} \rightarrow W_h$ the interpolation operators. 
Here $\mathcal{V}$ and $\mathcal{W}$ are suitable subspaces such that the interpolations are well-defined and bounded (see, e.g., Lemma 5.38 of \cite{Monk2003}). 
The finite element spaces $W_h$, $V_h$ and $U_h$ satisfy the de Rham complex (see, e.g., (5.59) of  \cite{Monk2003}), which implies
\begin{align}
\textbf{curl}\,V_h \subset W_h,\qquad \nabla U_h \subset V_h, \qquad \text{and}\quad \textbf{curl}\,\pi_h^1\boldsymbol{v} = \pi_h^2\textbf{curl}\,\boldsymbol{v} 
\quad \text{ for } \boldsymbol{v}\in \mathcal{V}.
\label{E:de Rham}
\end{align} 
Following the definition of $\boldsymbol{\mathcal{Z}}(\Omega)$, define
\begin{align*}
\boldsymbol{\mathcal{Z}}_h = \{\boldsymbol{u}_h\in V_h\;|\; b(\boldsymbol{u}_h,p_h) = 0,\quad\forall p_h\in U_h\}.
\end{align*}
The discrete problem for \eqref{E:u} is to find $\boldsymbol{u}_h\in V_h$ such that
\begin{align}
a(\boldsymbol{u}_h,\boldsymbol{v}_h) = \langle \boldsymbol{f}_h,\boldsymbol{v}_{h,T}\rangle,\qquad \forall \boldsymbol{v}_h\in V_h,
\label{E:uh}
\end{align}
where $\boldsymbol{f}_h\in \boldsymbol{H}(\text{div}_{\Gamma}^0;\Gamma)$. 
We note that $\boldsymbol{f}_h$ could be taken as $\boldsymbol{f}$ in \eqref{E:u} or some approximation of $\boldsymbol{f}$.
Similar to the continuous counterpart, we transfer \eqref{E:uh} into finding $\boldsymbol{u}_h\in \boldsymbol{L}^2(\Omega)$ such that
\begin{align}
(I+K_h)\boldsymbol{u}_h = \boldsymbol{z}_h,
\label{E:uh_Kh}
\end{align}
where $\boldsymbol{z}_h\in \boldsymbol{\mathcal{Z}}_h$ satisfies
\begin{align}
a_+(\boldsymbol{z}_h,\boldsymbol{v}_h) = \langle\boldsymbol{f}_h,\boldsymbol{v}_{h,T}\rangle,\qquad \forall\boldsymbol{v}_h\in \boldsymbol{\mathcal{Z}}_h,
\label{E:zh_Zh}
\end{align}
and $K_h : \boldsymbol{L}^2(\Omega) \rightarrow \boldsymbol{\mathcal{Z}}_h\subset \boldsymbol{L}^2(\Omega)$ is such that
\begin{align}
a_+(K_h\boldsymbol{g},\boldsymbol{v}_h) = -(\kappa^2+1)(\epsilon_r\boldsymbol{g},\boldsymbol{v}_h),\qquad \forall \boldsymbol{v}_h\in \boldsymbol{\mathcal{Z}}_h.
\label{E:Kh g_Zh}
\end{align}
Using the same arguments as the proofs of Lemmas \ref{L:z,Ku} and \ref{L:equiv}, 
we can show the well-posedness of \eqref{E:zh_Zh} and \eqref{E:Kh g_Zh} as well as the equivalence of \eqref{E:uh} and \eqref{E:uh_Kh}.
\begin{lemma}
\label{L:equiv_h}
Given $\boldsymbol{f}_h\in \boldsymbol{H}(\normalfont{\text{div}}_{\Gamma}^0;\Gamma)$ and $\boldsymbol{g}\in \boldsymbol{L}^2(\Omega)$, 
there exist unique solutions $\boldsymbol{z}_h$ and $K_h\boldsymbol{g}$ of \eqref{E:zh_Zh} and \eqref{E:Kh g_Zh}, respectively. 
Furthermore, $\boldsymbol{u}_h\in V_h$ is a solution of \eqref{E:uh} if and only if $\boldsymbol{u}_h\in \boldsymbol{L}^2(\Omega)$ is a solution of \eqref{E:uh_Kh}.
\end{lemma}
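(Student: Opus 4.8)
The plan is to transcribe the proofs of Lemmas \ref{L:z,Ku} and \ref{L:equiv} almost verbatim to the discrete setting, using that $W_h$, $V_h$, $U_h$ form the de Rham complex \eqref{E:de Rham}. For the well-posedness of \eqref{E:zh_Zh} and \eqref{E:Kh g_Zh}, I would first note that $a_+(\cdot,\cdot)$ is coercive and bounded on all of $\boldsymbol{H}(\textbf{curl};\Omega)$ (indeed $a_+(\boldsymbol{v},\boldsymbol{v})\geqslant\min\{1,\alpha\}\Vert\boldsymbol{v}\Vert_{\textbf{curl},\Omega}^2$), hence a fortiori on the finite-dimensional subspace $\boldsymbol{\mathcal{Z}}_h$. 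The right-hand sides are bounded conjugate-linear functionals on $\boldsymbol{\mathcal{Z}}_h$: for \eqref{E:zh_Zh} because $\boldsymbol{f}_h\in\boldsymbol{L}^2_t(\Gamma)$ and the piecewise-polynomial tangential trace $\boldsymbol{v}_{h,T}$ again lies in $\boldsymbol{L}^2_t(\Gamma)$, so that $\langle\boldsymbol{f}_h,\boldsymbol{v}_{h,T}\rangle$ is the $\boldsymbol{L}^2_t(\Gamma)$ inner product, controlled by $\Vert\boldsymbol{v}_h\Vert_{\textbf{curl},\Omega}$ on the finite-dimensional space $\boldsymbol{\mathcal{Z}}_h$; for \eqref{E:Kh g_Zh} it is immediate from Cauchy–Schwarz. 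The Lax–Milgram lemma then produces the unique $\boldsymbol{z}_h$ and $K_h\boldsymbol{g}$, exactly as in Lemma \ref{L:z,Ku}, and $K_h$ maps into $\boldsymbol{\mathcal{Z}}_h$ by construction.

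Before the equivalence I would record the discrete analogue of Lemma \ref{L:decomp}, namely $V_h = \boldsymbol{\mathcal{Z}}_h\oplus\nabla(U_h/\mathbb{C})$: by \eqref{E:de Rham} one has $\nabla U_h\subset V_h$, and since $(\epsilon_r\nabla\cdot,\nabla\cdot)$ is coercive on $U_h/\mathbb{C}$ (Poincaré together with $\epsilon_r\geqslant\alpha$), every $\boldsymbol{v}_h\in V_h$ admits a unique $p_h\in U_h/\mathbb{C}$ with $(\epsilon_r\nabla p_h,\nabla q_h) = b(\boldsymbol{v}_h,q_h)$ for all $q_h\in U_h$, so that $\boldsymbol{v}_h-\nabla p_h\in\boldsymbol{\mathcal{Z}}_h$. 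The equivalence of \eqref{E:uh} and \eqref{E:uh_Kh} then repeats the proof of Lemma \ref{L:equiv}. If $\boldsymbol{u}_h$ solves \eqref{E:uh}, taking $\boldsymbol{v}_h=\nabla p_h$ with $p_h\in U_h$ and using $\textbf{curl}\,\nabla p_h=\boldsymbol{0}$, $(\nabla p_h)_T=\nabla_\Gamma p_h$, and $\langle\boldsymbol{f}_h,\nabla_\Gamma p_h\rangle=-\langle\text{div}_\Gamma\boldsymbol{f}_h,p_h\rangle=0$ (since $\boldsymbol{f}_h\in\boldsymbol{H}(\text{div}_\Gamma^0;\Gamma)$) gives $\boldsymbol{u}_h\in\boldsymbol{\mathcal{Z}}_h$; restricting \eqref{E:uh} to $\boldsymbol{v}_h\in\boldsymbol{\mathcal{Z}}_h$ and inserting \eqref{E:zh_Zh} and \eqref{E:Kh g_Zh} yields $a_+((I+K_h)\boldsymbol{u}_h-\boldsymbol{z}_h,\boldsymbol{v}_h)=0$ on $\boldsymbol{\mathcal{Z}}_h$, and since $(I+K_h)\boldsymbol{u}_h-\boldsymbol{z}_h\in\boldsymbol{\mathcal{Z}}_h$, coercivity forces \eqref{E:uh_Kh}. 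Conversely, a solution $\boldsymbol{u}_h$ of \eqref{E:uh_Kh} equals $\boldsymbol{z}_h-K_h\boldsymbol{u}_h\in\boldsymbol{\mathcal{Z}}_h\subset V_h$; decomposing an arbitrary $\boldsymbol{v}_h=\boldsymbol{v}_{h,0}+\nabla p_h$ as above, and using $a(\boldsymbol{u}_h,\nabla p_h)=-\kappa^2 b(\boldsymbol{u}_h,p_h)=0$, the defining relation of $K_h$, \eqref{E:uh_Kh}, \eqref{E:zh_Zh}, and again $\langle\boldsymbol{f}_h,\nabla_\Gamma p_h\rangle=0$, one recovers $a(\boldsymbol{u}_h,\boldsymbol{v}_h)=\langle\boldsymbol{f}_h,\boldsymbol{v}_{h,T}\rangle$, i.e. \eqref{E:uh}.

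The whole argument is bookkeeping and I do not expect a genuine obstacle; the two points worth stressing are that $a_+$ is truly coercive, so the finite-dimensional problems are uniquely solvable with \emph{no} smallness condition on $h$ and no discrete compactness property, and that the discrete Helmholtz splitting is available precisely because $\nabla U_h\subset V_h$ in \eqref{E:de Rham}, which is what makes the discrete proof parallel the continuous one.
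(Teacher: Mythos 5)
Your proposal is correct and follows essentially the same route as the paper: coercivity and boundedness of $a_+$ give unique solvability of the discrete problems, the inclusion $\nabla U_h\subset V_h$ from \eqref{E:de Rham} yields the discrete splitting $V_h=\boldsymbol{\mathcal{Z}}_h\oplus\nabla(U_h/\mathbb{C})$, and the equivalence of \eqref{E:uh} and \eqref{E:uh_Kh} is obtained by repeating the argument of Lemma \ref{L:equiv} (your explicit use of $\langle\boldsymbol{f}_h,\nabla_\Gamma p_h\rangle=-\langle\text{div}_\Gamma\boldsymbol{f}_h,p_h\rangle=0$ is exactly the step the paper leaves implicit). You simply spell out details the paper compresses, including the $h$-independent bound that the paper records for later use.
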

\begin{proof}
The well-posedness of \eqref{E:zh_Zh} and \eqref{E:Kh g_Zh} follows the coercivity and boundedness of $a_+(\cdot,\cdot)$. 
In addition, we have
\begin{align*}
\Vert K_h\boldsymbol{g}\Vert_{\textbf{curl},\Omega}\leqslant C\Vert\boldsymbol{g}\Vert_{0,\Omega}
\end{align*}
with $C$ independent of $h$.
By \eqref{E:de Rham}, the finite dimensional space $\nabla(U_h/\mathbb{C})$ is a closed subspace of $V_h$. Therefore, we have the decomposition
\begin{align*}
V_h = \boldsymbol{\mathcal{Z}}_h \oplus \nabla(U_h/\mathbb{C}).
\end{align*}
Due to Lemma \ref{L:equiv}, \eqref{E:uh} is equivalent to  \eqref{E:uh_Kh}.
\end{proof}

Next we prove the well-posedness of \eqref{E:uh_Kh}. 
We first show that the finite element solutions of \eqref{E:zh_Zh} and \eqref{E:Kh g_Zh} approximate the solutions of \eqref{E:z_Z} and \eqref{E:Kg_Z}, respectively.
Similar to the equations \eqref{E:z} and \eqref{E:Kg1}-\eqref{E:Kg2}, consider the problems of finding $\boldsymbol{z}_h\in V_h$ such that
\begin{align}
a_+(\boldsymbol{z}_h,\boldsymbol{v}_h) = \langle \boldsymbol{f}_h,\boldsymbol{v}_{h,T}\rangle,\qquad \forall \boldsymbol{v}_h\in V_h,
\label{E:zh}
\end{align}
and $(K_h\boldsymbol{g},\phi_h)\in V_h \times U_h/\mathbb{C}$ such that
\begin{align}
a_+(K_h\boldsymbol{g},\boldsymbol{v}_h) + b(\boldsymbol{v}_h,\phi_h) &= -(\kappa^2+1)(\epsilon_r \boldsymbol{g},\boldsymbol{v}_h),\qquad \forall\boldsymbol{v}_h\in V_h,
\label{E:Kh g1}
\\
b(K_h\boldsymbol{g},q_h) &= 0,\qquad\qquad\qquad\qquad\quad \forall q_h \in U_h.
\label{E:Kh g2}
\end{align}
The next lemma claims the well-posedness of \eqref{E:zh} and \eqref{E:Kh g1}-\eqref{E:Kh g2}, 
the equivalence of \eqref{E:zh_Zh} and \eqref{E:zh}, and the equivalence of \eqref{E:Kh g_Zh} and \eqref{E:Kh g1}-\eqref{E:Kh g2}. 
In addition, the quasi-optimal error estimates of the finite element solution $\boldsymbol{z}_h$ and $K_h\boldsymbol{g}$ are obtained.
\begin{lemma}
\label{L:zh}
Given $\boldsymbol{f}_h\in \boldsymbol{H}(\normalfont{\text{div}}_{\Gamma}^0;\Gamma)$ and $\boldsymbol{g}\in \boldsymbol{L}^2(\Omega)$, 
there exist, respectively, a unique solution $\boldsymbol{z}_h\in V_h$ of \eqref{E:zh} and a unique solution 
$(K_h\boldsymbol{g},\phi_h)\in V_h\times U_h/\mathbb{C}$ of \eqref{E:Kh g1}-\eqref{E:Kh g2}. 
Furthermore, $\boldsymbol{z}_h\in \boldsymbol{\mathcal{Z}}_h$ and $K_h\boldsymbol{g}\in \boldsymbol{\mathcal{Z}}_h$ 
are the solutions of \eqref{E:zh_Zh} and \eqref{E:Kh g_Zh},  respectively. 
Given $\boldsymbol{f}\in \boldsymbol{H}(\normalfont{\text{div}}_{\Gamma}^0;\Gamma)$, 
if $\boldsymbol{z}\in \boldsymbol{\mathcal{Z}}(\Omega)$ and $K\boldsymbol{g}\in \boldsymbol{\mathcal{Z}}(\Omega)$ solve \eqref{E:z_Z} and \eqref{E:Kg_Z}, respectively,
then the following error estimates hold
\begin{align*}
\Vert \boldsymbol{z} - \boldsymbol{z}_h\Vert_{\normalfont{\textbf{curl}},\Omega} &\leqslant C\inf_{\boldsymbol{v}_h\in V_h}\Vert \boldsymbol{z} - \boldsymbol{v}_h\Vert_{\normalfont{\textbf{curl}},\Omega} + C\Vert \boldsymbol{f} - \boldsymbol{f}_h\Vert_{0,\Gamma},
\\
\Vert (K - K_h)\boldsymbol{g}\Vert_{\normalfont{\textbf{curl}},\Omega} &\leqslant C\inf_{\boldsymbol{v}_h\in V_h}\Vert K\boldsymbol{g} - \boldsymbol{v}_h\Vert_{\normalfont{\textbf{curl}},\Omega} + C\inf_{\psi_h\in U_h}\Vert \nabla \phi - \nabla \psi_h\Vert_{0,\Omega}.
\end{align*}
\end{lemma}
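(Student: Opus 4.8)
The plan is to treat the pair of mixed problems \eqref{E:zh}--\eqref{E:Kh g2} as standard conforming discretizations of well-posed continuous problems and invoke the classical theory. First I would establish the well-posedness of \eqref{E:zh} and \eqref{E:Kh g1}--\eqref{E:Kh g2}. For \eqref{E:zh}, since $a_+(\cdot,\cdot)$ is coercive and bounded on all of $\boldsymbol{H}(\textbf{curl};\Omega)$ (not merely on $\boldsymbol{\mathcal{Z}}(\Omega)$) and $V_h\subset\boldsymbol{H}(\textbf{curl};\Omega)$, the Lax--Milgram lemma on $V_h$ gives a unique $\boldsymbol{z}_h\in V_h$; taking $\boldsymbol{v}_h=\nabla p_h$ for $p_h\in U_h$ and using $\nabla U_h\subset V_h$ from \eqref{E:de Rham} together with $\langle\boldsymbol{f}_h,\nabla_\Gamma p_h\rangle = -\langle\text{div}_\Gamma\boldsymbol{f}_h,p_h\rangle = 0$ (because $\boldsymbol{f}_h\in\boldsymbol{H}(\text{div}_\Gamma^0;\Gamma)$ and $(\nabla p_h)_T=\nabla_\Gamma p_h$) shows $b(\boldsymbol{z}_h,p_h)=0$, hence $\boldsymbol{z}_h\in\boldsymbol{\mathcal{Z}}_h$; restricting the test functions then shows $\boldsymbol{z}_h$ solves \eqref{E:zh_Zh}, and uniqueness in the smaller problem follows from coercivity. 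For \eqref{E:Kh g1}--\eqref{E:Kh g2}, I would check the two Babu\v{s}ka--Brezzi conditions at the discrete level: $\boldsymbol{\mathcal{Z}}_h$-coercivity of $a_+$ is inherited from coercivity on the whole space, and the discrete inf--sup for $b$ follows exactly as in the proof of Lemma~\ref{L:equiv2}, namely by testing with $\boldsymbol{v}_h=\nabla q_h\in V_h$ and using $b(\nabla q_h,q_h)=(\epsilon_r\nabla q_h,\nabla q_h)\geqslant\alpha\|\nabla q_h\|_{0,\Omega}^2$, which gives an $h$-independent inf--sup constant. This yields a unique $(K_h\boldsymbol{g},\phi_h)$; equation \eqref{E:Kh g2} forces $K_h\boldsymbol{g}\in\boldsymbol{\mathcal{Z}}_h$, and restricting \eqref{E:Kh g1} to $\boldsymbol{v}_h\in\boldsymbol{\mathcal{Z}}_h$ (where $b(\boldsymbol{v}_h,\phi_h)=0$) recovers \eqref{E:Kh g_Zh}.

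For the error estimates I would apply C\'ea's lemma and the Babu\v{s}ka--Brezzi error estimate, being careful about the perturbation of the data. For $\boldsymbol{z}$: let $\boldsymbol{z}\in\boldsymbol{\mathcal{Z}}(\Omega)$ solve \eqref{E:z}, which by Lemma~\ref{L:equiv2} is the same as \eqref{E:z_Z}, and observe that for $\boldsymbol{v}_h\in V_h$,
\begin{align*}
a_+(\boldsymbol{z}-\boldsymbol{z}_h,\boldsymbol{v}_h) = \langle\boldsymbol{f}-\boldsymbol{f}_h,\boldsymbol{v}_{h,T}\rangle.
\end{align*}
Writing $\boldsymbol{z}-\boldsymbol{z}_h = (\boldsymbol{z}-\boldsymbol{v}_h) + (\boldsymbol{v}_h-\boldsymbol{z}_h)$ for an arbitrary $\boldsymbol{v}_h\in V_h$, testing with $\boldsymbol{v}_h-\boldsymbol{z}_h$, and using coercivity, boundedness, and the trace bound $\|\boldsymbol{w}_{h,T}\|_{0,\Gamma}\leqslant C\|\boldsymbol{w}_h\|_{\textbf{curl},\Omega}$ on the data term, one obtains
\begin{align*}
\|\boldsymbol{z}-\boldsymbol{z}_h\|_{\textbf{curl},\Omega}\leqslant C\|\boldsymbol{z}-\boldsymbol{v}_h\|_{\textbf{curl},\Omega} + C\|\boldsymbol{f}-\boldsymbol{f}_h\|_{0,\Gamma},
\end{align*}
and taking the infimum over $\boldsymbol{v}_h$ gives the first estimate. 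For $K\boldsymbol{g}$: here the data $-(\kappa^2+1)(\epsilon_r\boldsymbol{g},\cdot)$ is the same in \eqref{E:Kg1} and \eqref{E:Kh g1}, so this is a genuine (unperturbed) mixed finite element problem; the standard Babu\v{s}ka--Brezzi estimate bounds $\|K\boldsymbol{g}-K_h\boldsymbol{g}\|_{\textbf{curl},\Omega}$ by $C\inf_{\boldsymbol{v}_h}\|K\boldsymbol{g}-\boldsymbol{v}_h\|_{\textbf{curl},\Omega} + C\inf_{\psi_h}\|\nabla\phi-\nabla\psi_h\|_{0,\Omega}$, the second term coming from the Lagrange multiplier. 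I would slightly simplify by noting that $K\boldsymbol{g}\in\boldsymbol{\mathcal{Z}}(\Omega)$ and $K_h\boldsymbol{g}\in\boldsymbol{\mathcal{Z}}_h$, but since $\boldsymbol{\mathcal{Z}}_h\not\subset\boldsymbol{\mathcal{Z}}(\Omega)$ the multiplier term cannot be dropped entirely, which is why it appears in the statement.

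I expect the main subtlety — not a deep obstacle but the point needing care — to be the bookkeeping in the mixed estimate for $K\boldsymbol{g}$, specifically handling the non-conformity $\boldsymbol{\mathcal{Z}}_h\not\subset\boldsymbol{\mathcal{Z}}(\Omega)$. The clean way is: pick any $\boldsymbol{v}_h\in V_h$; split $K\boldsymbol{g}-K_h\boldsymbol{g} = (K\boldsymbol{g}-\boldsymbol{v}_h)+(\boldsymbol{v}_h-K_h\boldsymbol{g})$ and also allow $\boldsymbol{v}_h$ to be chosen in $\boldsymbol{\mathcal{Z}}_h$ using a Fortin-type argument (the discrete inf--sup for $b$ guarantees a bounded projection onto $\boldsymbol{\mathcal{Z}}_h$); then for $\boldsymbol{w}_h := \boldsymbol{v}_h - K_h\boldsymbol{g}\in\boldsymbol{\mathcal{Z}}_h$, coercivity gives $\|\boldsymbol{w}_h\|_{\textbf{curl},\Omega}^2\leqslant C\,a_+(\boldsymbol{w}_h,\boldsymbol{w}_h)$, and $a_+(\boldsymbol{w}_h,\boldsymbol{w}_h) = a_+(\boldsymbol{v}_h-K\boldsymbol{g},\boldsymbol{w}_h) + a_+(K\boldsymbol{g}-K_h\boldsymbol{g},\boldsymbol{w}_h)$; the last term equals $-b(\boldsymbol{w}_h,\phi) = -b(\boldsymbol{w}_h,\phi-\psi_h)$ for any $\psi_h\in U_h$ (using $b(\boldsymbol{w}_h,\psi_h)=0$ since $\boldsymbol{w}_h\in\boldsymbol{\mathcal{Z}}_h$), subtracting \eqref{E:Kh g1} tested against $\boldsymbol{w}_h$ and using $b(\boldsymbol{w}_h,\phi_h)=0$. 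Bounding everything by boundedness of $a_+$ and $b$ and combining with the triangle inequality yields the claimed estimate. The remaining ingredients — coercivity/boundedness of $a_+$, the trace inequality, and the de Rham relations — are all either explicitly available from the earlier lemmas or standard, so the proof reduces to assembling these pieces carefully.
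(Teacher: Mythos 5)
Your overall route coincides with the paper's for most of the lemma: the well-posedness of \eqref{E:zh} and \eqref{E:Kh g1}--\eqref{E:Kh g2} and their identification with \eqref{E:zh_Zh} and \eqref{E:Kh g_Zh} are argued exactly as in Lemma \ref{L:equiv2} (which is all the paper does), and your coercivity-on-$\boldsymbol{\mathcal{Z}}_h$/Fortin derivation of the second estimate is a correct re-derivation of the standard mixed-method bound that the paper obtains by simply citing Theorem 2.45 of \cite{Monk2003}. Where you genuinely differ is the first estimate: you run a Strang-type perturbation argument directly on the error equation $a_+(\boldsymbol{z}-\boldsymbol{z}_h,\boldsymbol{v}_h)=\langle\boldsymbol{f}-\boldsymbol{f}_h,\boldsymbol{v}_{h,T}\rangle$, whereas the paper introduces the auxiliary continuous solution $\boldsymbol{z}^h\in\boldsymbol{\mathcal{Z}}(\Omega)$ of $a_+(\boldsymbol{z}^h,\boldsymbol{v})=\langle\boldsymbol{f}_h,\boldsymbol{v}_T\rangle$, uses the continuous stability bound $\Vert\boldsymbol{z}-\boldsymbol{z}^h\Vert_{\textbf{curl},\Omega}\leqslant C\Vert\boldsymbol{f}-\boldsymbol{f}_h\Vert_{0,\Gamma}$, C\'ea's lemma for $\boldsymbol{z}^h-\boldsymbol{z}_h$, and the triangle inequality. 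The two routes are equivalent in outcome; the paper's has the advantage that the data perturbation is absorbed entirely at the continuous level.

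That difference exposes the one step in your write-up that fails as stated: you control $|\langle\boldsymbol{f}-\boldsymbol{f}_h,\boldsymbol{w}_{h,T}\rangle|$, $\boldsymbol{w}_h=\boldsymbol{v}_h-\boldsymbol{z}_h\in V_h$, by invoking ``the trace bound $\Vert\boldsymbol{w}_{h,T}\Vert_{0,\Gamma}\leqslant C\Vert\boldsymbol{w}_h\Vert_{\textbf{curl},\Omega}$''. No such uniform $L^2$ tangential trace estimate holds on $\boldsymbol{H}(\textbf{curl};\Omega)$; the paper proves it only on $\boldsymbol{\mathcal{Z}}(\Omega)$ (Lemma \ref{L:regul}), and $\boldsymbol{v}_h-\boldsymbol{z}_h$ lies neither in $\boldsymbol{\mathcal{Z}}(\Omega)$ nor, for arbitrary $\boldsymbol{v}_h$, in $\boldsymbol{\mathcal{Z}}_h$; a discrete trace inequality would cost a negative power of $h$. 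The bound you need is nevertheless true, but for a different reason: since $\boldsymbol{f}-\boldsymbol{f}_h\in\boldsymbol{H}(\text{div}_{\Gamma}^0;\Gamma)\subset\boldsymbol{L}^2_t(\Gamma)$ is surface-divergence-free, its $\boldsymbol{H}^{-1/2}(\text{div}_{\Gamma};\Gamma)$ norm is bounded by $C\Vert\boldsymbol{f}-\boldsymbol{f}_h\Vert_{0,\Gamma}$, and pairing against $\Vert\boldsymbol{w}_{h,T}\Vert_{\boldsymbol{H}^{-1/2}(\text{curl}_{\Gamma};\Gamma)}\leqslant C\Vert\boldsymbol{w}_h\Vert_{\textbf{curl},\Omega}$ gives $|\langle\boldsymbol{f}-\boldsymbol{f}_h,\boldsymbol{w}_{h,T}\rangle|\leqslant C\Vert\boldsymbol{f}-\boldsymbol{f}_h\Vert_{0,\Gamma}\Vert\boldsymbol{w}_h\Vert_{\textbf{curl},\Omega}$ --- the same continuity fact underlying the well-posedness of \eqref{E:z} in Lemma \ref{L:equiv2}. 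With that replacement your argument closes; alternatively, adopting the paper's detour through $\boldsymbol{z}^h$ avoids discrete traces altogether.
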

\begin{proof}
Noticing that $V_h$ and $U_h$ are conforming finite element spaces that satisfy \eqref{E:de Rham}, 
we can show the well-posedness and the equivalence similarly to Lemma \ref{L:equiv2}. 
The second error estimate follows Theorem 2.45 of \cite{Monk2003}. To show the estimate for $\boldsymbol{z} - \boldsymbol{z}_h$, 
let $\boldsymbol{z}^h\in \boldsymbol{\mathcal{Z}}$ be the solution of 
\begin{align*}
a_+(\boldsymbol{z}^h,\boldsymbol{v}) = \langle\boldsymbol{f}_h,\boldsymbol{v}_T\rangle,\qquad  \forall \boldsymbol{v}\in \boldsymbol{H}(\textbf{curl};\Omega).
\end{align*}
By the well-posedness of \eqref{E:z}, it holds that
\begin{align*}
\Vert \boldsymbol{z} - \boldsymbol{z}^h\Vert_{\textbf{curl},\Omega} \leqslant C \Vert \boldsymbol{f} - \boldsymbol{f}_h\Vert_{0,\Gamma}.
\end{align*} 
On the other hand, using the Cea's lemma, we have that
\begin{align*}
\Vert \boldsymbol{z}^h - \boldsymbol{z}_h\Vert_{\textbf{curl},\Omega}\leqslant C \inf_{\boldsymbol{v}_h\in V_h}\Vert \boldsymbol{z}^h - \boldsymbol{v}_h\Vert_{\textbf{curl},\Omega}.
\end{align*}
Then the estimate for $\boldsymbol{z} - \boldsymbol{z}_h$ is obtained using the triangular inequality.
\end{proof}

Let $\Lambda = \{h_n\}_{n=1}^{\infty}$ be such that $h_n \rightarrow 0$ as $n\rightarrow \infty$. 
Unlike the compact embedding of $\boldsymbol{\mathcal{Z}}(\Omega)$ into $\boldsymbol{L}^2(\Omega)$, 
$\boldsymbol{\mathcal{Z}}_h$ is not a subset of $\boldsymbol{\mathcal{Z}}(\Omega)$. 
Thus $\boldsymbol{\mathcal{Z}}_h$ does not have the same compactness property. Yet what holds for $\boldsymbol{\mathcal{Z}}_h$ is the so-called discrete compactness. 
\begin{definition}
We say that $\{\boldsymbol{\mathcal{Z}}_h\}_{h\in \Lambda}$ has the discrete compactness property 
if for each $\{\boldsymbol{v}_h\}_{h\in \Lambda}$ such that $\boldsymbol{v}_h\in \boldsymbol{\mathcal{Z}}_h$ 
and $\Vert\boldsymbol{v}_h\Vert_{\normalfont{\textbf{curl}},\Omega}\leqslant C$ for all $h\in \Lambda$, there exists $\boldsymbol{v}\in \boldsymbol{\mathcal{Z}}$ and a subsequence, 
still denoted as $\{\boldsymbol{v}_h\}$, such that $\Vert\boldsymbol{v}_h - \boldsymbol{v}\Vert_{0,\Omega}\rightarrow 0$ as $h\rightarrow 0$ in $\Lambda$. 
\end{definition}
In the following, we give a proof for the discrete compactness of $\{\boldsymbol{\mathcal{Z}}_h\}_{h\in \Lambda}$.
\begin{lemma}
\label{L:Zh}
The collection of spaces $\{\boldsymbol{\mathcal{Z}}_h\}_{h\in \Lambda}$ has the discrete compactness property.
\end{lemma}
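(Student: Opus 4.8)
The plan is to follow the strategy of \cite{Monk2003} for the impedance problem, exploiting the fact that the surface-divergence-free Neumann data lets us avoid any quasi-uniformity assumption on the boundary mesh. Let $\{\boldsymbol{v}_h\}_{h\in\Lambda}$ with $\boldsymbol{v}_h\in\boldsymbol{\mathcal{Z}}_h$ and $\Vert\boldsymbol{v}_h\Vert_{\textbf{curl},\Omega}\leqslant C$. Since $\boldsymbol{L}^2(\Omega)$ is reflexive, after passing to a subsequence we may assume $\boldsymbol{v}_h\rightharpoonup\boldsymbol{v}$ weakly in $\boldsymbol{H}(\textbf{curl};\Omega)$ for some $\boldsymbol{v}$; the first task is to show $\boldsymbol{v}\in\boldsymbol{\mathcal{Z}}(\Omega)$ and then upgrade the weak $\boldsymbol{L}^2$ convergence to strong convergence. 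For the membership $\boldsymbol{v}\in\boldsymbol{\mathcal{Z}}(\Omega)$: given $q\in H^1(\Omega)$, use the interpolant (or a quasi-interpolant, e.g.\ Cl\'ement/Scott--Zhang, since $q$ need not be continuous) $q_h\in U_h$ with $q_h\to q$ in $H^1(\Omega)$; then $b(\boldsymbol{v}_h,q_h)=0$ by definition of $\boldsymbol{\mathcal{Z}}_h$, and passing to the limit (weak convergence of $\boldsymbol{v}_h$ against the strongly convergent $\epsilon_r\nabla q_h$) gives $b(\boldsymbol{v},q)=0$, so $\boldsymbol{v}\in\boldsymbol{\mathcal{Z}}(\Omega)$.

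For the strong convergence, the key is a Helmholtz-type splitting of $\boldsymbol{v}_h$. Write $\boldsymbol{v}_h=\nabla p_h+\boldsymbol{w}_h$ where $p_h\in U_h/\mathbb{C}$ solves $(\epsilon_r\nabla p_h,\nabla\xi_h)=(\epsilon_r\boldsymbol{v}_h,\nabla\xi_h)$ for all $\xi_h\in U_h$; since $\boldsymbol{v}_h\in\boldsymbol{\mathcal{Z}}_h$ the right-hand side vanishes, so in fact $p_h=0$ and $\boldsymbol{v}_h=\boldsymbol{w}_h$ already — this is precisely why the surface-divergence-free condition simplifies matters. Instead the useful splitting is at the continuous level: let $\boldsymbol{v}_h=\nabla\varphi_h+\boldsymbol{r}_h$ be the (continuous) $\epsilon_r$-orthogonal Helmholtz decomposition in $\boldsymbol{H}(\textbf{curl};\Omega)$, i.e.\ $\varphi_h\in H^1(\Omega)/\mathbb{C}$ with $\text{div}(\epsilon_r(\boldsymbol{v}_h-\nabla\varphi_h))=0$ and $\boldsymbol{\nu}\cdot\epsilon_r(\boldsymbol{v}_h-\nabla\varphi_h)=0$, so $\boldsymbol{r}_h\in\boldsymbol{\mathcal{Z}}(\Omega)$. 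By Lemma \ref{L:regul}, $\boldsymbol{r}_h\in\boldsymbol{H}^{1/2+s}(\Omega)$ with $\Vert\boldsymbol{r}_h\Vert_{1/2+s,\Omega}\leqslant C\Vert\boldsymbol{r}_h\Vert_{\textbf{curl},\Omega}\leqslant C$ (note $\textbf{curl}\,\boldsymbol{r}_h=\textbf{curl}\,\boldsymbol{v}_h$ and the $\boldsymbol{L}^2$ part is controlled by the $\boldsymbol{H}(\textbf{curl})$ bound), hence by the compact embedding $\boldsymbol{H}^{1/2+s}(\Omega)\hookrightarrow\hookrightarrow\boldsymbol{L}^2(\Omega)$ a subsequence of $\{\boldsymbol{r}_h\}$ converges strongly in $\boldsymbol{L}^2(\Omega)$. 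It then remains to show that the gradient part $\nabla\varphi_h$ tends to zero in $\boldsymbol{L}^2(\Omega)$.

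To control $\nabla\varphi_h$, test the identity $\boldsymbol{v}_h=\nabla\varphi_h+\boldsymbol{r}_h$ with $\epsilon_r\nabla\psi_h$ for $\psi_h\in U_h$: since $\boldsymbol{v}_h\in\boldsymbol{\mathcal{Z}}_h$ we get $(\epsilon_r\nabla\varphi_h,\nabla\psi_h)=-(\epsilon_r\boldsymbol{r}_h,\nabla\psi_h)$. Now $(\epsilon_r\nabla\varphi_h,\nabla\psi_h)=(\epsilon_r\nabla\varphi_h,\nabla(\psi_h-\varphi_h))+ (\epsilon_r\nabla\varphi_h,\nabla\varphi_h)$; choosing $\psi_h$ to be a quasi-interpolant of $\varphi_h$ and using $\Vert\varphi_h-\psi_h\Vert_{1,\Omega}\to 0$ (here is where \emph{no} quasi-uniformity is needed, only that $\varphi_h$ is uniformly bounded in $H^1$ and we can pass to a weakly convergent subsequence $\varphi_h\rightharpoonup\varphi$ with $\psi_h-\varphi\to 0$ strongly so that the cross terms vanish in the limit), we deduce $\Vert\nabla\varphi_h\Vert_{0,\Omega}^2\leqslant C\big(\Vert\varphi_h-\psi_h\Vert_{1,\Omega}+|(\epsilon_r\boldsymbol{r}_h,\nabla\psi_h)|\big)$. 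Finally, since $b(\boldsymbol{v},q)=0$ for all $q\in H^1(\Omega)$ and $b$ is an inner product pairing on gradients (Lemma \ref{L:decomp}), the weak limit forces $\nabla\varphi=0$, and combined with $\boldsymbol{r}_h\to\boldsymbol{r}$ strongly and $\boldsymbol{v}=\boldsymbol{r}$ we obtain $\Vert\boldsymbol{v}_h-\boldsymbol{v}\Vert_{0,\Omega}\to 0$.

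The main obstacle is the last paragraph: showing that the spurious gradient component $\nabla\varphi_h$ actually vanishes in the limit. This requires the commuting-diagram property \eqref{E:de Rham} together with the discrete Helmholtz decomposition $V_h=\boldsymbol{\mathcal{Z}}_h\oplus\nabla(U_h/\mathbb{C})$ to relate the continuous gradient part to something orthogonal to $\boldsymbol{\mathcal{Z}}_h$, and one must be careful that the interpolation of $\varphi_h$ into $U_h$ is $h$-stable without a quasi-uniform boundary mesh — which is exactly the point where the surface-divergence-free structure of the data, propagated through $\boldsymbol{r}_h\in\boldsymbol{H}_0(\text{div};\Omega)$, lets us replace the quasi-uniformity hypothesis of \cite{Monk2003} with the regularity estimate of Lemma \ref{L:regul}. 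All remaining steps (weak compactness, passing to limits in the bilinear forms, the compact Sobolev embedding) are routine.
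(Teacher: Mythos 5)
Your set-up matches the paper's: you use the continuous decomposition of Lemma \ref{L:decomp} to write $\boldsymbol{v}_h=\nabla\varphi_h+\boldsymbol{r}_h$ with $\boldsymbol{r}_h\in\boldsymbol{\mathcal{Z}}(\Omega)$, invoke Lemma \ref{L:regul} and the compact embedding $\boldsymbol{H}^{1/2+s}(\Omega)\hookrightarrow\boldsymbol{L}^2(\Omega)$ to extract a strongly convergent subsequence of $\{\boldsymbol{r}_h\}$, and correctly identify that everything hinges on showing $\Vert\nabla\varphi_h\Vert_{0,\Omega}\to 0$. But that crucial step is not actually proved in your argument, and the two devices you offer for it do not work. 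First, choosing $\psi_h\in U_h$ as a quasi-interpolant of $\varphi_h$ with $\Vert\varphi_h-\psi_h\Vert_{1,\Omega}\to 0$ is not available: $\varphi_h$ is a mesh-dependent function that is only uniformly bounded in $H^1(\Omega)$, so Cl\'ement/Scott--Zhang interpolation gives stability $\Vert\varphi_h-\psi_h\Vert_{1,\Omega}\leqslant C\Vert\varphi_h\Vert_{1,\Omega}$ but no decay, absent uniform extra regularity of $\varphi_h$ (which you do not have and which is exactly what fails for gradients here). Second, the closing argument ``the weak limit forces $\nabla\varphi=0$, hence $\Vert\boldsymbol{v}_h-\boldsymbol{v}\Vert_{0,\Omega}\to 0$'' only yields weak convergence $\nabla\varphi_h\rightharpoonup 0$ in $\boldsymbol{L}^2(\Omega)$, which does not imply $\Vert\nabla\varphi_h\Vert_{0,\Omega}\to 0$; strong vanishing of the gradient part is precisely the content of discrete compactness and cannot be obtained from weak limits alone.

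The paper closes this gap differently: it never interpolates the scalar $\varphi_h$ (there called $p^n$), but instead interpolates the regular part $\boldsymbol{v}_{n,0}=\boldsymbol{r}_h$ with the edge interpolant $\pi^1_{h}$. Because $\boldsymbol{v}_{n,0}\in\boldsymbol{H}^{1/2+s}(\Omega)$ uniformly (Lemma \ref{L:regul}) and $\textbf{curl}\,\boldsymbol{v}_{n,0}=\textbf{curl}\,\boldsymbol{v}_n$ is already a piecewise polynomial in $W_{h}$, the interpolation estimate of \cite{Hsiao2002, Bermudez2005} applies and gives $\Vert(I-\pi^1_{h})\boldsymbol{v}_{n,0}\Vert_{0,\Omega}\leqslant Ch^{1/2+s}$ with a constant independent of $n$. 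The commuting property \eqref{E:de Rham} then shows $\pi^1_{h}\nabla p^n$ is curl-free, hence equals $\nabla\phi_n$ for some $\phi_n\in U_{h}$, and testing the discrete constraint and the definition of $p^n$ against this discrete gradient yields
\begin{align*}
c\Vert\nabla p^n\Vert_{0,\Omega}^2\leqslant(\epsilon_r\nabla p^n,\nabla p^n)=(\epsilon_r\nabla p^n,(I-\pi^1_{h})\nabla p^n)=-(\epsilon_r\nabla p^n,(I-\pi^1_{h})\boldsymbol{v}_{n,0}),
\end{align*}
so $\Vert\nabla p^n\Vert_{0,\Omega}\leqslant Ch^{1/2+s}\to 0$. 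You would need to replace your quasi-interpolation step with this commuting-diagram plus low-regularity interpolation argument (or an equivalent) for the proof to go through.
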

\begin{proof}
Let $\{\boldsymbol{v}_n\}_{n=1}^{\infty}$ be such that $\boldsymbol{v}_n \in \boldsymbol{\mathcal{Z}}_{h_n} \subset V_{h_n}$ 
and $\Vert \boldsymbol{v}_n\Vert_{\textbf{curl},\Omega} \leqslant C$ for all $n$. By definition,
\begin{align}
(\epsilon_r \boldsymbol{v}_n,\nabla q_n) = 0,\qquad \forall q_n\in U_{h_n}.
\label{E:vn}
\end{align}
Let $\boldsymbol{v}_n = \boldsymbol{v}_{n,0} + \nabla p^n$ be the decomposition of $\boldsymbol{v}_n$ by Lemma \ref{L:decomp}, 
i.e., $p^n\in H^1(\Omega)/\mathbb{C}$ is such that
\begin{align}
(\epsilon_r \boldsymbol{v}_n,\nabla q) = (\epsilon_r \nabla p^n,\nabla q),\qquad \forall q\in H^1(\Omega)/\mathbb{C}.
\label{E:pn}
\end{align}
Taking $q = p^n$ in \eqref{E:pn}, we have that $\Vert \nabla p^n\Vert_{0,\Omega} \leqslant C\Vert \boldsymbol{v}_n\Vert_{0,\Omega} \leqslant C$. 
Therefore, $\{\boldsymbol{v}_{n,0}\} \subset \boldsymbol{\mathcal{Z}}(\Omega)$ with $\Vert \boldsymbol{v}_{n,0}\Vert_{\textbf{curl},\Omega}\leqslant C$ for all $n$. 
Due to the compact embedding of $\boldsymbol{\mathcal{Z}}(\Omega)$ into $\boldsymbol{L}^2(\Omega)$, there exists $\boldsymbol{v}\in \boldsymbol{L}^2(\Omega)$ 
and a subsequence of $\{\boldsymbol{v}_{n,0}\}$, still denoted by $\{\boldsymbol{v}_{n,0}\}$, such that
\begin{align*}
\Vert \boldsymbol{v}_{n,0} - \boldsymbol{v} \Vert_{0,\Omega} \longrightarrow 0,\qquad \text{as}\; n\rightarrow \infty.
\end{align*}
Since $\boldsymbol{v}$ coincides with the weak limit of $\boldsymbol{v}_{n,0}$, $\boldsymbol{v}$ belongs to $\boldsymbol{\mathcal{Z}}(\Omega)$.

On the other hand, by Lemma \ref{L:regul}, $\boldsymbol{v}_{n,0} \in \boldsymbol{H}^{1/2+s}(\Omega)$. Furthermore, due to the de Rham complex, 
$\textbf{curl}\,\boldsymbol{v}_{n,0} = \textbf{curl}\,\boldsymbol{v}_n \in W_{h_n}$. Consequently, (2.4) of \cite{Hsiao2002} is applicable, 
which guarantees that the interpolation operator $\pi_{h_n}^1$ is well-defined for $\boldsymbol{v}_{n,0}$, and for each element $K\in \tau_h$, it holds that
\begin{align}
\Vert (I-\pi_{h_n}^1)\boldsymbol{v}_{n,0}\Vert_{0,K}
\leqslant C\Big(h_n^{1/2+s}\Vert \boldsymbol{v}_{n,0}\Vert_{1/2+s,K} + h_n\Vert \textbf{curl}\,\boldsymbol{v}_{n,0}\Vert_{0,K}\Big).
\label{IE:I-pi}
\end{align}
For this reason, $\pi_{h_n}^1\nabla p^n$ is also well-defined. By using \eqref{E:de Rham}, we have that
\begin{align*}
\textbf{curl}\,\pi_{h_n}^1\nabla p^n = \pi_{h_n}^2 \textbf{curl}\,\nabla p^n = \boldsymbol{0}.
\end{align*}
Since $\pi_{h_n}^1\nabla p^n$ belongs to $V_{h_n}$, there exists $\phi_n\in U_{h_n}$ such that $\pi_{h_n}^1\nabla p^n = \nabla \phi_n$. Combining \eqref{E:vn} and \eqref{E:pn}, we obtain that
\begin{align*}
(\epsilon_r\nabla p^n,\nabla p^n) = (\epsilon_r\nabla p^n,(I-\pi_{h_n}^1)\nabla p^n) = -(\epsilon_r\nabla p^n,(I-\pi_{h_n}^1)\boldsymbol{v}_{n,0}),
\end{align*}
which implies that
\begin{align*}
c\Vert \nabla p^n\Vert_{0,\Omega} 
&\leqslant \Vert (I-\pi_{h_n}^1)\boldsymbol{v}_{n,0}\Vert_{0,\Omega} = \Big(\sum_K \Vert (I-\pi_{h_n}^1)\boldsymbol{v}_{n,0}\Vert^2_{0,K}\Big)^{1/2}.
\end{align*}
Using the above inequality, \eqref{IE:I-pi} and Lemma \ref{L:regul}, we have that
\begin{align*}
\Vert \nabla p^n\Vert_{0,\Omega} &\leqslant C\Big(h_n^{1/2+s}\Vert\boldsymbol{v}_{n,0}\Vert_{1/2+s,\Omega} + h_n\Vert \textbf{curl}\,\boldsymbol{v}_{n,0}\Vert_{0,\Omega}\Big)
\\
&\leqslant C\Big(h_n^{1/2+s}\Vert\boldsymbol{v}_{n,0}\Vert_{\textbf{curl},\Omega} + h_n\Vert \boldsymbol{v}_{n,0}\Vert_{\textbf{curl},\Omega}\Big)
\\
&\leqslant C h_n^{1/2+s} \longrightarrow 0,
\end{align*}
as $n \rightarrow \infty$.
Therefore,
\begin{align*}
\Vert \boldsymbol{v}_n - \boldsymbol{v}\Vert_{0,\Omega} \leqslant \Vert \boldsymbol{v}_{n,0} - \boldsymbol{v}\Vert_{0,\Omega} + \Vert \nabla p^n\Vert_{0,\Omega} \longrightarrow 0,\qquad \text{as}\; n\rightarrow \infty.
\end{align*}
The proof is complete. 
\end{proof}

\begin{definition}
If for each bounded set $\mathcal{A}\subset \boldsymbol{L}^2(\Omega)$, 
$\{K_h\boldsymbol{v}\,\big|\,\forall \boldsymbol{v}\in \mathcal{A},\;\forall h\in \Lambda\}$ is relatively compact in $\boldsymbol{L}^2(\Omega)$, 
we say that the set of bounded linear operators $\{K_h\}_{h\in\Lambda}$ is collectively compact from $\boldsymbol{L}^2(\Omega)$ to $\boldsymbol{L}^2(\Omega)$.
\end{definition} 
By the same arguments for Theorem 7.18 of \cite{Monk2003}, we can deduce from the discrete compactness of $\{\boldsymbol{\mathcal{Z}}_h\}_{h\in \Lambda}$ the collectively compactness of $\{K_h\}_{h\in \Lambda}$.
\begin{lemma}
\label{L:Kh}
$\{K_h\}_{h\in \Lambda}$ is collectively compact from $\boldsymbol{L}^2(\Omega)$ to $\boldsymbol{L}^2(\Omega)$.
\end{lemma}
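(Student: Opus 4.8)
The plan is to unwind the definition of collective compactness into a sequential statement and then feed it the discrete compactness of Lemma~\ref{L:Zh} together with the uniform $\textbf{curl}$-bound on $K_h$. Fix a bounded set $\mathcal{A}\subset\boldsymbol{L}^2(\Omega)$, say $\Vert\boldsymbol{g}\Vert_{0,\Omega}\leqslant M$ for all $\boldsymbol{g}\in\mathcal{A}$, and put $\mathcal{S}=\{K_h\boldsymbol{g}\,|\,\boldsymbol{g}\in\mathcal{A},\ h\in\Lambda\}$. Since $\boldsymbol{L}^2(\Omega)$ is a metric space, it suffices to show that every sequence in $\mathcal{S}$ has an $\boldsymbol{L}^2(\Omega)$-convergent subsequence. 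Write such a sequence as $\boldsymbol{w}_n=K_{h_n}\boldsymbol{g}_n$ with $h_n\in\Lambda$ and $\boldsymbol{g}_n\in\mathcal{A}$. By Lemma~\ref{L:equiv_h} (see also Lemma~\ref{L:zh}) we have $\boldsymbol{w}_n\in\boldsymbol{\mathcal{Z}}_{h_n}$ and $\Vert\boldsymbol{w}_n\Vert_{\textbf{curl},\Omega}\leqslant C\Vert\boldsymbol{g}_n\Vert_{0,\Omega}\leqslant CM$ with $C$ independent of $n$.

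Next I would distinguish two cases according to the index sequence $\{h_n\}$. If some mesh size is repeated infinitely often, say $h_n=h_0$ for $n$ in an infinite set, then the corresponding $\boldsymbol{w}_n$ all lie in the fixed finite-dimensional space $\boldsymbol{\mathcal{Z}}_{h_0}\subset V_{h_0}$ and are bounded there (recall $\Vert\cdot\Vert_{0,\Omega}\leqslant\Vert\cdot\Vert_{\textbf{curl},\Omega}$), hence admit a convergent subsequence by finite-dimensionality. Otherwise every mesh size occurs only finitely often, so after passing to a subsequence we may assume the $h_n$ are pairwise distinct and therefore $h_n\to 0$; then $\{\boldsymbol{w}_n\}$ is a uniformly $\textbf{curl}$-bounded sequence with $\boldsymbol{w}_n\in\boldsymbol{\mathcal{Z}}_{h_n}$, and the argument of Lemma~\ref{L:Zh} — which only uses that the mesh sizes tend to zero, not that they exhaust $\Lambda$ — yields $\boldsymbol{v}\in\boldsymbol{\mathcal{Z}}(\Omega)\subset\boldsymbol{L}^2(\Omega)$ and a further subsequence with $\Vert\boldsymbol{w}_n-\boldsymbol{v}\Vert_{0,\Omega}\to 0$. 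In either case an $\boldsymbol{L}^2(\Omega)$-convergent subsequence has been extracted, so $\mathcal{S}$ is relatively compact; this reproduces the reasoning of Theorem~7.18 of \cite{Monk2003}.

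The substantive ingredients are already in place: the uniform bound $\Vert K_h\boldsymbol{g}\Vert_{\textbf{curl},\Omega}\leqslant C\Vert\boldsymbol{g}\Vert_{0,\Omega}$ and the discrete compactness of $\{\boldsymbol{\mathcal{Z}}_h\}_{h\in\Lambda}$. Consequently the only point requiring care — and the main, essentially minor, obstacle — is the bookkeeping around the double index $(h,\boldsymbol{g})$: one must notice that a bounded sequence living in a \emph{single} finite element space needs the finite-dimensional compactness argument rather than discrete compactness, and that, when the mesh sizes genuinely go to zero, the discrete compactness statement may legitimately be invoked along the extracted subsequence because its proof relies only on $h_n\to 0$ and on the fixed compact embedding $\boldsymbol{\mathcal{Z}}(\Omega)\hookrightarrow\boldsymbol{L}^2(\Omega)$.
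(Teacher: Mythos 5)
Your argument is correct and is essentially the paper's own: the paper proves this lemma by invoking the same ingredients you use, namely the uniform bound $\Vert K_h\boldsymbol{g}\Vert_{\textbf{curl},\Omega}\leqslant C\Vert\boldsymbol{g}\Vert_{0,\Omega}$ with $K_h\boldsymbol{g}\in\boldsymbol{\mathcal{Z}}_h$ together with the discrete compactness of $\{\boldsymbol{\mathcal{Z}}_h\}_{h\in\Lambda}$, following the argument of Theorem~7.18 of \cite{Monk2003}. Your explicit bookkeeping (finite-dimensionality when a mesh size repeats, re-running the Lemma~\ref{L:Zh} argument when the mesh sizes tend to zero) is exactly the detail that citation suppresses, and it is sound.
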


Now we are in the position to prove the well-posedness of \eqref{E:uh_Kh} and the error estimate for $\boldsymbol{u}_h$. 
Notice that when $\boldsymbol{g}$ in \eqref{E:Kg1}-\eqref{E:Kg2} belongs to $\boldsymbol{\mathcal{Z}}(\Omega)$ and 
$\boldsymbol{g}$ in \eqref{E:Kh g1}-\eqref{E:Kh g2} belongs to $\boldsymbol{\mathcal{Z}}_h$, 
by letting $\boldsymbol{v} = \nabla \phi$ and $\boldsymbol{v}_h = \nabla \phi_h$, 
we see that $\phi = 0$ in $H^1(\Omega)/\mathbb{C}$ and $\phi_h = 0$ in $U_h/\mathbb{C}$.
\begin{theorem}
\label{T:uh}
There exists a unique solution $\boldsymbol{u}_h\in \boldsymbol{\mathcal{Z}}_h$ of \eqref{E:uh_Kh}. Furthermore,
\begin{align*}
\Vert \boldsymbol{u} - \boldsymbol{u}_h\Vert_{\normalfont{\textbf{curl}},\Omega} \leqslant C h^{1/2}\Vert \boldsymbol{f}\Vert_{0,\Gamma} + C\Vert \boldsymbol{f} - \boldsymbol{f}_h\Vert_{0,\Gamma}.
\end{align*}
\end{theorem}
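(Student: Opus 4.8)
The plan is to establish the well-posedness of \eqref{E:uh_Kh} via the theory of collectively compact operator approximation, and then derive the error estimate by combining the discrete stability with the approximation results already available. Concretely, I would proceed in four stages: (i) show $(I+K_h)$ is uniformly invertible on $\boldsymbol{\mathcal{Z}}_h$ for $h$ small, (ii) deduce existence, uniqueness and uniform stability of $\boldsymbol{u}_h$, (iii) establish a quasi-optimal estimate $\Vert\boldsymbol{u}-\boldsymbol{u}_h\Vert_{\textbf{curl},\Omega}$ in terms of best-approximation errors of $\boldsymbol{z}$ and $K\boldsymbol{u}$ plus the data error $\Vert\boldsymbol{f}-\boldsymbol{f}_h\Vert_{0,\Gamma}$, and (iv) insert the interpolation estimate \eqref{IE:I-pi} together with the regularity bounds from Lemma~\ref{L:regul_Ku,z} to obtain the explicit rate $h^{1/2}$.

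\textbf{Step (i): uniform invertibility.} Since $\kappa^2$ is not a Neumann eigenvalue, $I+K$ is injective on $\boldsymbol{L}^2(\Omega)$, hence invertible by the Fredholm alternative (Lemma~\ref{L:K}). By Lemma~\ref{L:Kh}, $\{K_h\}_{h\in\Lambda}$ is collectively compact, and by Lemma~\ref{L:Zh} combined with Lemma~\ref{L:zh} one gets the pointwise convergence $K_h\boldsymbol{g}\to K\boldsymbol{g}$ in $\boldsymbol{L}^2(\Omega)$ for every fixed $\boldsymbol{g}$ (the best-approximation error of $K\boldsymbol{g}$ tends to zero, and $\nabla\phi=0$ here because $\boldsymbol{g}\in\boldsymbol{\mathcal{Z}}_h$ or $\boldsymbol{\mathcal{Z}}(\Omega)$ as noted before the theorem). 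Collective compactness plus pointwise convergence is precisely the hypothesis of the classical Anselone--Atkinson theorem: it gives $\Vert(K-K_h)K_h\Vert_{\boldsymbol{L}^2\to\boldsymbol{L}^2}\to 0$, and consequently $(I+K_h)^{-1}$ exists and is uniformly bounded on $\boldsymbol{\mathcal{Z}}_h$ for all $h$ sufficiently small. This yields the uniform stability $\Vert\boldsymbol{u}_h\Vert_{0,\Omega}\leqslant C\Vert\boldsymbol{z}_h\Vert_{0,\Omega}\leqslant C\Vert\boldsymbol{f}_h\Vert_{0,\Gamma}$, and then, taking $\boldsymbol{v}_h=\boldsymbol{u}_h$ in \eqref{E:uh} and using the Gårding-type inequality as in the proof of Lemma~\ref{L:K}, the full bound $\Vert\boldsymbol{u}_h\Vert_{\textbf{curl},\Omega}\leqslant C\Vert\boldsymbol{f}_h\Vert_{0,\Gamma}$.

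\textbf{Step (iii)--(iv): the error estimate.} Writing $\boldsymbol{u}=\boldsymbol{z}-K\boldsymbol{u}$ and $\boldsymbol{u}_h=\boldsymbol{z}_h-K_h\boldsymbol{u}_h$, I would split
\[
\boldsymbol{u}-\boldsymbol{u}_h=(\boldsymbol{z}-\boldsymbol{z}_h)-(K\boldsymbol{u}-K_h\boldsymbol{u})-(K_h\boldsymbol{u}-K_h\boldsymbol{u}_h),
\]
and absorb the last term by the uniform boundedness of $K_h$ in the $\textbf{curl}$-norm together with \eqref{IE:u_0}-type control of $\Vert\boldsymbol{u}-\boldsymbol{u}_h\Vert_{0,\Omega}$ obtained from Step~(i) (standard bootstrap: the $\boldsymbol{L}^2$-error is bounded by the consistency terms, which then feed back). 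For the first two terms, Lemma~\ref{L:zh} gives
\[
\Vert\boldsymbol{z}-\boldsymbol{z}_h\Vert_{\textbf{curl},\Omega}\leqslant C\inf_{\boldsymbol{v}_h\in V_h}\Vert\boldsymbol{z}-\boldsymbol{v}_h\Vert_{\textbf{curl},\Omega}+C\Vert\boldsymbol{f}-\boldsymbol{f}_h\Vert_{0,\Gamma}
\]
and $\Vert(K-K_h)\boldsymbol{u}\Vert_{\textbf{curl},\Omega}\leqslant C\inf_{\boldsymbol{v}_h}\Vert K\boldsymbol{u}-\boldsymbol{v}_h\Vert_{\textbf{curl},\Omega}$ (the $\nabla\phi$ term vanishes since $\boldsymbol{u}\in\boldsymbol{\mathcal{Z}}(\Omega)$). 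Now I choose $\boldsymbol{v}_h=\pi_h^1\boldsymbol{z}$ and $\boldsymbol{v}_h=\pi_h^1 K\boldsymbol{u}$; this is legitimate because Lemma~\ref{L:regul_Ku,z} supplies $\boldsymbol{z},\,\textbf{curl}\,\boldsymbol{z}\in\boldsymbol{H}^{1/2+s}(\Omega)\cap\boldsymbol{H}^{1/2}(\Omega)$ and $K\boldsymbol{u},\,\textbf{curl}\,K\boldsymbol{u}\in\boldsymbol{H}^{1/2+s}(\Omega)$ with norms controlled by $\Vert\boldsymbol{f}\Vert_{0,\Gamma}$, so the interpolants are well-defined. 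Summing the elementwise estimate \eqref{IE:I-pi} over $K\in\tau_h$ (and the analogous estimate for the $\textbf{curl}$, i.e. the $\boldsymbol{H}(\text{div})$-interpolation error of $\textbf{curl}\,\boldsymbol{z}$, which also has regularity $1/2$) yields $\inf_{\boldsymbol{v}_h}\Vert\boldsymbol{z}-\boldsymbol{v}_h\Vert_{\textbf{curl},\Omega}\leqslant C h^{1/2}\Vert\boldsymbol{f}\Vert_{0,\Gamma}$ and likewise for $K\boldsymbol{u}$. Collecting everything gives the claimed bound.

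\textbf{The main obstacle} I expect is Step~(i): making the collectively-compact machinery deliver \emph{uniform} invertibility of $I+K_h$ on $\boldsymbol{\mathcal{Z}}_h$ rather than merely for each fixed $h$. The delicate points are (a) verifying the pointwise convergence $K_h\to K$ strongly — which requires knowing that the best approximation of $K\boldsymbol{g}$ in $V_h$ converges for \emph{every} $\boldsymbol{g}\in\boldsymbol{L}^2(\Omega)$, not just for smooth data, so one uses density of $\boldsymbol{\mathcal{Z}}(\Omega)\cap(\text{regular functions})$ together with the uniform bound $\Vert K_h\Vert\leqslant C$; and (b) checking that the spaces $\boldsymbol{\mathcal{Z}}_h$, despite not being nested in $\boldsymbol{\mathcal{Z}}(\Omega)$, still allow the norm-gap argument — this is exactly what the discrete compactness of Lemma~\ref{L:Zh} is for, and it must be invoked carefully to conclude $\Vert(K-K_h)K_h\Vert\to 0$. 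Once uniform invertibility is in hand, the remaining estimates are routine given the regularity and interpolation results already proved.
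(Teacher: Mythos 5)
Your proposal is correct and follows essentially the same route as the paper: well-posedness and the $\boldsymbol{L}^2$ bound via the collectively compact operator theory (Lemma \ref{L:Kh} together with Theorem 2.51 of \cite{Monk2003}), the splitting $\boldsymbol{u}-\boldsymbol{u}_h=(\boldsymbol{z}-\boldsymbol{z}_h)-(K-K_h)\boldsymbol{u}-K_h(\boldsymbol{u}-\boldsymbol{u}_h)$ with the last term absorbed through $\Vert K_h(\boldsymbol{u}-\boldsymbol{u}_h)\Vert_{\normalfont{\textbf{curl}},\Omega}\leqslant C\Vert\boldsymbol{u}-\boldsymbol{u}_h\Vert_{0,\Omega}$, and then Lemma \ref{L:zh} combined with the regularity of Lemma \ref{L:regul_Ku,z}. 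The only deviation is in the final interpolation step: where you re-derive the $h^{1/2}$ bound by summing \eqref{IE:I-pi} (which, as stated, presupposes that the curl of the interpolated function lies in $W_h$) and handling the curl part through the commuting diagram, the paper simply invokes Lemma 5.1 of \cite{Bermudez2005}, which is precisely the weakened-regularity interpolation estimate needed for $\boldsymbol{z}$ and $K\boldsymbol{u}$, whose curls are only in $\boldsymbol{H}^{1/2}(\Omega)$.
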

\begin{proof}
By Lemma \ref{L:Kh} and Theorem 2.51 of \cite{Monk2003}, there exists a unique solution of \eqref{E:uh} with
\begin{align*}
\Vert \boldsymbol{u}_h\Vert_{0,\Omega} \leqslant C\Vert \boldsymbol{z}_h\Vert_{0,\Omega}\leqslant C\Vert \boldsymbol{f}_h\Vert_{0,\Gamma}
\end{align*}
and
\begin{align*}
\Vert \boldsymbol{u} - \boldsymbol{u}_h\Vert_{0,\Omega}\leqslant C\Big(\Vert \boldsymbol{z} - \boldsymbol{z}_h\Vert_{0,\Omega} + \Vert(K - K_h)\boldsymbol{u}\Vert_{0,\Omega}\Big).
\end{align*}
By \eqref{E:uh_Kh}, it holds that
\begin{align*}
\Vert \boldsymbol{u}_h\Vert_{\textbf{curl},\Omega} = \Vert \boldsymbol{z}_h - K_h\boldsymbol{u}_h\Vert_{\textbf{curl},\Omega} \leqslant C\Vert\boldsymbol{f}_h\Vert_{0,\Gamma} + C\Vert \boldsymbol{u}_h\Vert_{0,\Omega} \leqslant C\Vert \boldsymbol{f}_h\Vert_{0,\Gamma}.
\end{align*}
Meanwhile, using \eqref{E:u_K} and \eqref{E:uh_Kh}, we have that
\begin{align*}
\Vert \boldsymbol{u} - \boldsymbol{u}_h\Vert_{\textbf{curl},\Omega} &= \Vert \boldsymbol{z} - K\boldsymbol{u} - \boldsymbol{z}_h + K_h\boldsymbol{u}_h\Vert_{\textbf{curl},\Omega}
\\
&\leqslant \Vert\boldsymbol{z} - \boldsymbol{z}_h\Vert_{\textbf{curl},\Omega} + \Vert(K - K_h)\boldsymbol{u}\Vert_{\textbf{curl},\Omega} + \Vert K_h(\boldsymbol{u}-\boldsymbol{u}_h)\Vert_{\textbf{curl},\Omega}.
\end{align*}
Due to the well-posedness of \eqref{E:Kh g1}-\eqref{E:Kh g2}, 
there exists a constant $C$ independent of $h$ such that $\Vert K_h(\boldsymbol{u} - \boldsymbol{u}_h)\Vert_{\textbf{curl},\Omega} \leqslant C \Vert \boldsymbol{u} - \boldsymbol{u}_h\Vert_{0,\Omega}$.
Combining the above results, we get
\begin{align*}
\Vert \boldsymbol{u} - \boldsymbol{u}_h\Vert_{\normalfont{\textbf{curl}},\Omega} \leqslant C\Big(\Vert \boldsymbol{z} - \boldsymbol{z}_h\Vert_{\normalfont{\textbf{curl}},\Omega} + \Vert (K - K_h)\boldsymbol{u}\Vert_{\normalfont{\textbf{curl}},\Omega}\Big).
\end{align*}
Together with Lemma \ref{L:zh}, Lemma 5.1 of \cite{Bermudez2005} and the regularity results given in Lemma \ref{L:regul_Ku,z}, we obtain the desired estimate.
\end{proof}

\section{The Eigenvalue Problem and its FE Approximation}
The modified Maxwell's Stekloff eigenvalue problem is to find $\lambda\in \mathbb{C}$ and non-trivial $\boldsymbol{u}\in \boldsymbol{H}(\textbf{curl};\Omega)$ such that
\begin{align}
\label{E:eig}
a(\boldsymbol{u},\boldsymbol{v}) = -\lambda\langle\boldsymbol{\mathcal{S}}\boldsymbol{u}_T,\boldsymbol{v}_T\rangle,\qquad \forall \boldsymbol{v}\in \boldsymbol{H}(\textbf{curl};\Omega).
\end{align}
Here $\boldsymbol{\mathcal{S}}$ is defined by
%\footnote
%{
%Let $\boldsymbol{\mathcal{L}}$ and $\boldsymbol{\mathcal{L}}_h$ be respectively the solution operator $\boldsymbol{\mathcal{L}} : \boldsymbol{H}(\text{div}_{\Gamma}^0;\Gamma)\rightarrow \boldsymbol{H}(\textbf{curl};\Omega)$ and $\boldsymbol{\mathcal{L}}_h : \boldsymbol{H}(\text{div}_{\Gamma}^0;\Gamma) \rightarrow V_h$ such that
%$a(\boldsymbol{\mathcal{L}}\boldsymbol{f},\boldsymbol{v}) = \langle\boldsymbol{f},\boldsymbol{v}_T\rangle$ for all $\boldsymbol{v}\in \boldsymbol{H}(\textbf{curl};\Omega)$ and $a(\boldsymbol{\mathcal{L}}_h\boldsymbol{f},\boldsymbol{v}_h) = \langle\boldsymbol{f},\boldsymbol{v}_{h,T}\rangle$ for all $\boldsymbol{v}_h\in V_h$.
%$\boldsymbol{\mathcal{S}}$ and $\boldsymbol{\mathcal{S}}_h$ can be defined on $\boldsymbol{L}^2_t(\Gamma)$ because $\boldsymbol{\gamma}_T\boldsymbol{\mathcal{L}}\boldsymbol{H}(\text{div}_{\Gamma}^0;\Gamma)\cup \boldsymbol{\gamma}_T\boldsymbol{\mathcal{L}}_h\boldsymbol{H}(\text{div}_{\Gamma}^0;\Gamma) \subset \boldsymbol{L}_t^2(\Gamma)$. The subset of $\text{dom}\, \boldsymbol{\mathcal{S}}$ that is used in the analysis of $q$ is $\boldsymbol{H}(\text{curl}_{\Gamma},\Gamma)$.
%}
\begin{align*}
\boldsymbol{\mathcal{S}}\; : \; \boldsymbol{L}^2_t(\Gamma)\; \longrightarrow \; \boldsymbol{H}(\text{div}^0_{\Gamma};\Gamma),\qquad \boldsymbol{\mu}\; \longmapsto \; \textbf{curl}_{\Gamma}\, q,
\end{align*}
where $q \in H^1(\Gamma)/\mathbb{C}$ is the solution of the problem
\begin{align}
\langle \textbf{curl}_{\Gamma}\, q,\textbf{curl}_{\Gamma}\, \psi\rangle = \langle \boldsymbol{\mu},\textbf{curl}_{\Gamma}\,\psi\rangle,\qquad \forall \psi\in H^1(\Gamma)/\mathbb{C}.
\label{E:q}
\end{align}
Let $\boldsymbol{u}$ be the solution of \eqref{E:u} with $\boldsymbol{f}\in \boldsymbol{H}(\text{div}_{\Gamma}^0;\Gamma)$. We define the Neumann-to-Dirichlet operator $\boldsymbol{T}$ of \eqref{E:u} by
\begin{align*}
\boldsymbol{T}\; : \;\boldsymbol{H}(\text{div}_{\Gamma}^0;\Gamma)\;\longrightarrow \;\boldsymbol{H}(\text{div}_{\Gamma}^0;\Gamma),\qquad \boldsymbol{f}\;\longmapsto\;\boldsymbol{\mathcal{S}}\boldsymbol{u}_T.
\end{align*}
Using the similar arguments as in \cite{Camano2017}, we can show that $\boldsymbol{T}$ is compact and self-adjoint for Lipschitz polyhedra.

\begin{lemma}
\label{L:T}
The operator $\boldsymbol{T} : \boldsymbol{H}(\normalfont{\text{div}}_{\Gamma}^0;\Gamma) \rightarrow \boldsymbol{H}(\normalfont{\text{div}}_{\Gamma}^0;\Gamma)$ is compact and self-adjoint. 
\end{lemma}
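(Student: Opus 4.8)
The plan is to factor $\boldsymbol{T}$ through operators whose mapping properties are already available, and then verify compactness and self-adjointness separately. Write $\boldsymbol{T}\boldsymbol{f} = \boldsymbol{\mathcal{S}}\boldsymbol{u}_T$ where $\boldsymbol{u}$ solves \eqref{E:u}. By Lemma \ref{L:K} and Lemma \ref{L:regul_Ku,z}, the solution map $\boldsymbol{f}\mapsto \boldsymbol{u}$ is bounded from $\boldsymbol{H}(\text{div}_\Gamma^0;\Gamma)$ into $\boldsymbol{\mathcal{Z}}(\Omega)$, and in fact $\boldsymbol{u}\in\boldsymbol{H}^{1/2+s}(\Omega)$ with $\textbf{curl}\,\boldsymbol{u}\in\boldsymbol{H}^{1/2}(\Omega)$, all with norms controlled by $\Vert\boldsymbol{f}\Vert_{0,\Gamma}$. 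First I would use the extra regularity to get compactness of $\boldsymbol{f}\mapsto\boldsymbol{u}_T$: the tangential trace $\boldsymbol{u}_T$ lies in a space with positive Sobolev smoothness on $\Gamma$ (using the trace theorem on each face together with $\text{curl}_\Gamma\,\boldsymbol{u}_T=\boldsymbol{\nu}\cdot\textbf{curl}\,\boldsymbol{u}\in H^{s}(\Gamma)$ for $s<1/2$, which controls the tangential component beyond $\boldsymbol{L}^2_t$), and this space embeds compactly into $\boldsymbol{L}^2_t(\Gamma)$. Since $\boldsymbol{\mathcal{S}}:\boldsymbol{L}^2_t(\Gamma)\to\boldsymbol{H}(\text{div}_\Gamma^0;\Gamma)$ is bounded (from \eqref{E:q} with $\psi=q$ one gets $\Vert\textbf{curl}_\Gamma\,q\Vert_{0,\Gamma}\le\Vert\boldsymbol{\mu}\Vert_{0,\Gamma}$), the composition $\boldsymbol{T}=\boldsymbol{\mathcal{S}}\circ(\boldsymbol{f}\mapsto\boldsymbol{u}_T)$ is compact as a bounded operator precomposed with a compact one.

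For self-adjointness I would compute $\langle\boldsymbol{T}\boldsymbol{f},\boldsymbol{g}\rangle_{0,\Gamma}$ for $\boldsymbol{f},\boldsymbol{g}\in\boldsymbol{H}(\text{div}_\Gamma^0;\Gamma)$, letting $\boldsymbol{u},\boldsymbol{w}$ be the corresponding solutions of \eqref{E:u}. The key identity is that for $\boldsymbol{\mu}\in\boldsymbol{L}^2_t(\Gamma)$ and any $\boldsymbol{h}\in\boldsymbol{H}(\text{div}_\Gamma^0;\Gamma)$ one has $\langle\boldsymbol{\mathcal{S}}\boldsymbol{\mu},\boldsymbol{h}\rangle = \langle\textbf{curl}_\Gamma\,q,\boldsymbol{h}\rangle$; since $\boldsymbol{h}$ is surface-divergence-free one can write $\boldsymbol{h}=\textbf{curl}_\Gamma\,p$ for a suitable scalar potential $p\in H^1(\Gamma)/\mathbb{C}$ (using simple connectivity of $\Gamma$), and then the adjoint relations for $\textbf{curl}_\Gamma$/$\text{curl}_\Gamma$ plus \eqref{E:q} give $\langle\boldsymbol{\mathcal{S}}\boldsymbol{\mu},\boldsymbol{h}\rangle=\langle\textbf{curl}_\Gamma\,q,\textbf{curl}_\Gamma\,p\rangle=\langle\boldsymbol{\mu},\textbf{curl}_\Gamma\,p\rangle=\langle\boldsymbol{\mu},\boldsymbol{h}\rangle$. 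Hence $\langle\boldsymbol{T}\boldsymbol{f},\boldsymbol{g}\rangle=\langle\boldsymbol{u}_T,\boldsymbol{g}\rangle=\langle\boldsymbol{u}_T,\boldsymbol{w}_T$-duality paired with $\boldsymbol{g}\rangle$, i.e.\ one reduces to the duality pairing $\langle\boldsymbol{g},\boldsymbol{u}_T\rangle$. Using \eqref{E:u} for $\boldsymbol{w}$ with test function $\boldsymbol{v}=\boldsymbol{u}$ gives $\langle\boldsymbol{g},\boldsymbol{u}_T\rangle = a(\boldsymbol{w},\boldsymbol{u})$; symmetrically $\langle\boldsymbol{T}\boldsymbol{g},\boldsymbol{f}\rangle$ reduces to $\langle\boldsymbol{f},\boldsymbol{w}_T\rangle = a(\boldsymbol{u},\boldsymbol{w})$. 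Since $\epsilon_r$ is real and the bilinear form $a$ is symmetric in the real-coefficient sense, $a(\boldsymbol{w},\boldsymbol{u})=\overline{a(\boldsymbol{u},\boldsymbol{w})}$, which yields $\langle\boldsymbol{T}\boldsymbol{f},\boldsymbol{g}\rangle=\overline{\langle\boldsymbol{T}\boldsymbol{g},\boldsymbol{f}\rangle}=\langle\boldsymbol{f},\boldsymbol{T}\boldsymbol{g}\rangle$, i.e.\ $\boldsymbol{T}$ is self-adjoint.

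The main obstacle I anticipate is the bookkeeping around the different meanings of $\langle\cdot,\cdot\rangle$ — namely reconciling the $\boldsymbol{L}^2_t(\Gamma)$ inner product used to define $\boldsymbol{T}$ as an operator on $\boldsymbol{H}(\text{div}_\Gamma^0;\Gamma)$ with the $\boldsymbol{H}^{-1/2}(\text{div}_\Gamma)$–$\boldsymbol{H}^{-1/2}(\text{curl}_\Gamma)$ duality appearing in \eqref{E:u}. The excerpt's remark that these coincide when the arguments additionally lie in $\boldsymbol{L}^2_t(\Gamma)$ is exactly what makes the identification legitimate: $\boldsymbol{u}_T\in\boldsymbol{L}^2_t(\Gamma)$ by Lemma \ref{L:regul_Ku,z}'s regularity and $\boldsymbol{f},\boldsymbol{g}\in\boldsymbol{H}(\text{div}_\Gamma^0;\Gamma)\subset\boldsymbol{L}^2_t(\Gamma)$, so every pairing I use can be read as an $\boldsymbol{L}^2_t(\Gamma)$ inner product. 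The secondary technical point is justifying the scalar potential representation $\boldsymbol{h}=\textbf{curl}_\Gamma\,p$ for surface-divergence-free fields on a Lipschitz polyhedral $\Gamma$; this is a standard Hodge-decomposition fact on simply connected boundaries, for which one can cite \cite{Buffa2001a, Buffa2000}. Once these identifications are pinned down, both compactness and self-adjointness are short.
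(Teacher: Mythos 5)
Your proof is correct, and the self-adjointness half is essentially the paper's argument: both reduce $\langle\boldsymbol{T}\boldsymbol{f},\boldsymbol{g}\rangle$ to $a(\boldsymbol{u},\boldsymbol{v})$ via the identity $\langle\boldsymbol{\mathcal{S}}\boldsymbol{\mu},\boldsymbol{h}\rangle=\langle\boldsymbol{\mu},\boldsymbol{h}\rangle$ for surface-divergence-free $\boldsymbol{h}$ and then use Hermitian symmetry of $a$ (real $\kappa,\epsilon_r$); you make explicit the potential representation $\boldsymbol{h}=\textbf{curl}_{\Gamma}\,p$ and the $\boldsymbol{L}^2_t$/duality identification, which the paper uses silently, and your sesquilinear bookkeeping, though written loosely in one spot, is fine. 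The compactness half, however, takes a genuinely different route. The paper works on the boundary: it shows $\text{curl}_{\Gamma}\,\boldsymbol{u}_T=\boldsymbol{\nu}\cdot\textbf{curl}\,\boldsymbol{u}\in L^2(\Gamma)$ via Costabel's regularity, then invokes elliptic regularity for the surface problem \eqref{E:q} on the polyhedral boundary (Theorem 8 of \cite{Buffa2002} together with (2.2) of \cite{Hiptmair2002}) to get $q\in H^{1+t}(\Gamma)/\mathbb{C}$ with the quantitative bound \eqref{IE:q}, so that $\boldsymbol{T}\boldsymbol{f}=\textbf{curl}_{\Gamma}\,q$ lands in the compactly embedded space $\boldsymbol{H}^t(\text{div}^0_{\Gamma};\Gamma)$. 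You instead exploit the volume regularity $\boldsymbol{u}\in\boldsymbol{H}^{1/2+s}(\Omega)$ from Lemmas \ref{L:regul} and \ref{L:K}, deduce by the trace theorem (face by face, with $\boldsymbol{\nu}$ piecewise constant) that $\boldsymbol{f}\mapsto\boldsymbol{u}_T$ is compact into $\boldsymbol{L}^2_t(\Gamma)$, and compose with the bounded projection $\boldsymbol{\mathcal{S}}$ (your bound $\Vert\boldsymbol{\mathcal{S}}\boldsymbol{\mu}\Vert_{0,\Gamma}\leqslant\Vert\boldsymbol{\mu}\Vert_{0,\Gamma}$ is exactly right); the appeal to $\text{curl}_{\Gamma}\,\boldsymbol{u}_T$ in that step is not actually needed. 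Your route is more elementary — it avoids the surface elliptic regularity theory on polyhedra entirely — and suffices for the lemma as stated; what the paper's route buys is precisely the quantitative estimate \eqref{IE:q} on $q$, which is reused later in Lemma \ref{L:T1} and Theorem \ref{T:T} to obtain the $O(h^t)$ convergence rates, so your qualitative compactness argument would still leave that regularity to be established separately.
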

\begin{proof}
Given $\boldsymbol{f}\in \boldsymbol{H}(\text{div}_{\Gamma}^0;\Gamma)$, 
let $\boldsymbol{u}$ be the solution of \eqref{E:u} and $q\in H^1(\Gamma)/\mathbb{C}$ the solution of \eqref{E:q} with $\boldsymbol{\mu} = \boldsymbol{u}_T$. 
By the regularity result in \cite{Costabel1990}, $\text{curl}_{\Gamma}\,\boldsymbol{u}_T \in \boldsymbol{L}^2_t(\Gamma)$ and
\begin{align*}
\Vert\text{curl}_{\Gamma}\,\boldsymbol{u}_T\Vert_{0,\Gamma} = \Vert\boldsymbol{\nu}\cdot\textbf{curl}\,\boldsymbol{u}\Vert_{0,\Gamma} &\leqslant C(\Vert\textbf{curl}\,\boldsymbol{u}\Vert_{0,\Omega} + \Vert\textbf{curl}\,\textbf{curl}\,\boldsymbol{u}\Vert_{0,\Omega} + \Vert\boldsymbol{\nu}\times\textbf{curl}\,\boldsymbol{u}\Vert_{0,\Gamma})
\\
&\leqslant C\Vert \boldsymbol{f}\Vert_{0,\Gamma}.
\end{align*}
We apply Theorem 8 of \cite{Buffa2002} to claim that $q$ belongs to $H^{1+t}(\Gamma)/\mathbb{C}$ for $0 \leqslant t < s_3$ where
\begin{equation}\label{s3}
s_3 := \min\{s_{\Gamma},1\}
\end{equation} 
with $s_{\Gamma} > 0$ depending on the geometry of $\Gamma$.
%\footnote
%{
%Here $s_{\Gamma} := \min\{2\pi/L,1/2\}$ with $L = \max_{v\in \Gamma}\{|\partial w_v|\}$ denoting the maximal boundary length (in radians) of the spherical domains $w_v \subset \mathbb{S}^2$ corresponding to the vertices $v$ of the polyhedron $\Omega$. For example, when $\Omega$ is a cube, $s_{\Gamma} = 4/3$, and when $\Omega$ is the remaining part of a cube after a small cube is cut off from one of its corner, $s_{\Gamma} = 4/7$?
%}
In addition, (2.2) of \cite{Hiptmair2002} shows that
\begin{align}
\Vert q\Vert_{H^{1+t}(\Gamma)/\mathbb{C}}\leqslant C\Vert \text{curl}_{\Gamma}\,\boldsymbol{u}_T\Vert_{0,\Gamma} \leqslant C\Vert \boldsymbol{f}\Vert_{0,\Gamma}.
\label{IE:q}
\end{align}
Hence $\boldsymbol{T}\boldsymbol{f} = \boldsymbol{\mathcal{S}}\boldsymbol{u}_T = \textbf{curl}_{\Gamma}\, q \in \boldsymbol{H}^{t}(\text{div}^0_{\Gamma},\Gamma)$, which implies that $\boldsymbol{T}$ is compact.

Given $\boldsymbol{f}$, $\boldsymbol{g}\in \boldsymbol{H}(\text{div}_{\Gamma}^0;\Gamma)$, to see that $\boldsymbol{T}$ is self-adjoint, 
let $\boldsymbol{u}$ and $\boldsymbol{v}$ be the solutions of \eqref{E:u} with data $\boldsymbol{f}$ and $\boldsymbol{g}$, respectively. 
Then 
\[
\langle \boldsymbol{T}\boldsymbol{f},\boldsymbol{g}\rangle = \langle \boldsymbol{\mathcal{S}}\boldsymbol{u}_T,\boldsymbol{g}\rangle = \langle \boldsymbol{u}_T,\boldsymbol{g}\rangle = \overline{\langle\boldsymbol{g},\boldsymbol{u}_T\rangle} = \overline{a(\boldsymbol{v},\boldsymbol{u})} = a(\boldsymbol{u},\boldsymbol{v}) = \langle\boldsymbol{f},\boldsymbol{v}_T\rangle = \langle\boldsymbol{f},\boldsymbol{\mathcal{S}}\boldsymbol{v}_T\rangle = \langle \boldsymbol{f},\boldsymbol{T}\boldsymbol{g}\rangle.
\] 
The proof is complete.
\end{proof}

There is a one-to-one correspondence between the eigenpairs of \eqref{E:eig} and those of $\boldsymbol{T}$. 
In fact, if $(\lambda,\boldsymbol{u})$ is an eigenpair of \eqref{E:eig}, 
then $\boldsymbol{T}(-\lambda\boldsymbol{\mathcal{S}}\boldsymbol{u}_T) = \boldsymbol{\mathcal{S}}\boldsymbol{u}_T$.
Hence $(-1/\lambda,\boldsymbol{\mathcal{S}}\boldsymbol{u}_T)$ is an eigenpair of $\boldsymbol{T}$. 
On the other hand, if $(\mu,\boldsymbol{g})$ is an eigenpair of $\boldsymbol{T}$, then letting $\boldsymbol{w}$ be the solution of \eqref{E:u} with data $\boldsymbol{g}$, 
we see that $\boldsymbol{\mathcal{S}}\boldsymbol{w}_T = \boldsymbol{T}\boldsymbol{g} = \mu\boldsymbol{g}$. Thus $(-1/\mu,\boldsymbol{w})$ is an eigenpair of \eqref{E:eig}.
Then the existence of a discrete set of eigenvalues of \eqref{E:eig} is guaranteed by Lemma \ref{L:T}.

In the following we propose a finite element method for \eqref{E:eig}.
To approximate the operator $\boldsymbol{\mathcal{S}}$, an equivalent form of $\boldsymbol{\mathcal{S}}$ is considered in \cite{Camano2017}:
\begin{align*}
\boldsymbol{\mathcal{S}}\boldsymbol{\mu} = \boldsymbol{\mu} + \nabla_{\Gamma}\, p,
\end{align*}
where $p\in H^1(\Gamma)/\mathbb{C}$ is the solution of 
\begin{align*}
\langle \nabla_{\Gamma}\, p,\nabla_{\Gamma}\,\psi\rangle = -\langle \boldsymbol{\mu},\nabla_{\Gamma}\, \psi\rangle,\qquad \forall \psi\in H^1(\Gamma)/\mathbb{C}.
\end{align*}
The approximation of $\boldsymbol{\mathcal{S}}$ in \cite{Camano2017} is defined as
\begin{align*}
\boldsymbol{\mathcal{S}}^+_h\boldsymbol{\mu}_h = \boldsymbol{\mu}_h + \nabla_{\Gamma}\, p_h
\end{align*}
with $p_h\in \partial U_h/\mathbb{C}$ being the solution of
\begin{align*}
\langle \nabla_{\Gamma}\, p_h,\nabla_{\Gamma}\, \psi_h\rangle = -\langle \boldsymbol{\mu}_h,\nabla_{\Gamma}\,\psi_h\rangle,\qquad \forall \psi_h\in \partial U_h/\mathbb{C}.
\end{align*}
Here $\partial U_h$ represents the finite element space of $H^1(\Gamma)$. 
Unfortunately, the range of $\boldsymbol{\mathcal{S}}^+_h$ is no longer a subset of $\boldsymbol{H}(\text{div}_{\Gamma}^0;\Gamma)$,
which complicates the analysis. Nonetheless, the numerical results show that the use of $\boldsymbol{\mathcal{S}}^+_h$ computes correct eigenvalues.

To this end, we define a different discrete operator $\boldsymbol{\mathcal{S}}_h$ based on the original expression of $\boldsymbol{\mathcal{S}}$
(see also \cite{Halla2019approximation}):
\begin{align}
\boldsymbol{\mathcal{S}}_h\; : \; \boldsymbol{L}^2_t(\Gamma)\; \longrightarrow \; \boldsymbol{H}(\text{div}^0_{\Gamma};\Gamma),\quad \boldsymbol{\mu}_h\; \longmapsto \; \textbf{curl}_{\Gamma}\, q_h,
\label{E:Sh}
\end{align}
where $q_h\in \partial U_h/\mathbb{C}$ is the solution of
\begin{align}
\langle \textbf{curl}_{\Gamma}\, q_h,\textbf{curl}_{\Gamma}\, \psi_h\rangle = \langle\boldsymbol{\mu}_h,\textbf{curl}_{\Gamma}\, \psi_h\rangle,\qquad \forall \psi_h\in \partial U_h/\mathbb{C}.
\label{E:qh}
\end{align}
The discrete Stekloff eigenvalue problem is then to find $(\lambda_h,\boldsymbol{u}_h)\in \mathbb{C}\times V_h$ such that
\begin{align}
a(\boldsymbol{u}_h,\boldsymbol{v}_h) = -\lambda_h\langle \boldsymbol{\mathcal{S}}_h\boldsymbol{u}_{h,T},\boldsymbol{v}_{h,T}\rangle,\qquad \forall \boldsymbol{v}_h\in V_h,
\label{E:eig_h}
\end{align}
where $\boldsymbol{v}_{h,T} := (\boldsymbol{v}_h)_T$.
Let $\boldsymbol{u}_h$ be the solution of \eqref{E:uh} given $\boldsymbol{f}_h = \boldsymbol{f} \in \boldsymbol{H}(\text{div}_{\Gamma}^0;\Gamma)$. 
The corresponding discrete Neumann-to-Dirichlet operator $\boldsymbol{T}_h$ can be defined as
\begin{align*}
\boldsymbol{T}_h\; : \;\boldsymbol{H}(\text{div}^0_{\Gamma};\Gamma)\; \longrightarrow \;\boldsymbol{H}(\text{div}^0_{\Gamma};\Gamma),\qquad  \boldsymbol{f}\;\longmapsto \;\boldsymbol{\mathcal{S}}_h\boldsymbol{u}_{h,T}.
\end{align*}
It is easy to check that if $(\lambda_h,\boldsymbol{u}_h)$ is an eigenpair of \eqref{E:eig_h}, then $(-1/\lambda_h,\boldsymbol{\mathcal{S}}_h\boldsymbol{u}_{h,T})$ is an eigenpair of $\boldsymbol{T}_h$; if $(\mu_h,\boldsymbol{g}_h)$ is an eigenpair of $\boldsymbol{T}_h$, then $(-1/\mu_h,\boldsymbol{v}_h)$ is an eigenpair of \eqref{E:eig_h}, where $\boldsymbol{v}_h$ is the solution of \eqref{E:uh} with data $\boldsymbol{g}_h$.
%\footnote
%{
%Actually, if $(\lambda_h,\boldsymbol{u}_h)$ is an eigenpair of \eqref{E:eig_h}, then $\boldsymbol{T}_h(-\lambda_h \boldsymbol{\mathcal{S}}_h\boldsymbol{u}_{h,T}) = \boldsymbol{\mathcal{S}}_h\boldsymbol{u}_{h,T}$, which implies $(-1/\lambda_h,\boldsymbol{\mathcal{S}}_h\boldsymbol{u}_{h,T})$ is an eigenpair of $\boldsymbol{T}_h$; on the other hand, if $(\mu_h,\boldsymbol{g}_h)$ is an eigenpair of $\boldsymbol{T}_h$, then $\boldsymbol{\mathcal{S}}_h\boldsymbol{w}_h = \boldsymbol{T}_h\boldsymbol{g}_h = \mu_h\boldsymbol{g}_h$ with $\boldsymbol{w}_h$ the solution of \eqref{E:uh} given $\boldsymbol{f}_h = \boldsymbol{g}_h$, hence $(-1/\mu_h,\boldsymbol{w}_h)$ is an eigenpair of \eqref{E:eig_h}.
%}
Note that $\boldsymbol{T}_h$ is not self-adjoint.
%\footnote
%{
%But as the eigenproblem \eqref{E:eig_h} is finite-dimensional, so we don't worry about the existence of eigenvalues.
%} 
Its adjoint operator is $\boldsymbol{T}_h^* : \boldsymbol{H}(\text{div}^0_{\Gamma};\Gamma)\; \rightarrow \;\boldsymbol{H}(\text{div}^0_{\Gamma};\Gamma)$, $\boldsymbol{g}\mapsto\boldsymbol{\mathcal{S}}\boldsymbol{v}_{h,T}$ with $\boldsymbol{v}_h$ the solution of \eqref{E:uh} with data $\boldsymbol{\mathcal{S}}_h\boldsymbol{g}$. 
In fact, given $\boldsymbol{f}, \boldsymbol{g}\in \boldsymbol{H}(\text{div}^0_{\Gamma};\Gamma)$, 
we let $\boldsymbol{u}_h$ and $\boldsymbol{v}_h$ be, respectively, the solutions of \eqref{E:uh} with data $\boldsymbol{f}$ and $\boldsymbol{\mathcal{S}}_h \boldsymbol{g}$. 
Then $\langle \boldsymbol{T}_h\boldsymbol{f},\boldsymbol{g}\rangle = \langle\boldsymbol{\mathcal{S}}_h\boldsymbol{u}_{h,T},\boldsymbol{g}\rangle = \langle \boldsymbol{\mathcal{S}}_h\boldsymbol{u}_{h,T},\boldsymbol{\mathcal{S}}_h\boldsymbol{g}\rangle = \langle\boldsymbol{u}_{h,T},\boldsymbol{\mathcal{S}}_h\boldsymbol{g}\rangle = \overline{\langle\boldsymbol{\mathcal{S}}_h\boldsymbol{g},\boldsymbol{u}_{h,T}\rangle} = \overline{a(\boldsymbol{v}_h,\boldsymbol{u}_h)} = a(\boldsymbol{u}_h,\boldsymbol{v}_h) = \langle \boldsymbol{f},\boldsymbol{v}_{h,T}\rangle = \langle\boldsymbol{f},\boldsymbol{\mathcal{S}}\boldsymbol{v}_{h,T}\rangle$.

To estimate $\boldsymbol{T}-\boldsymbol{T}_h$, 
we split the error $(\boldsymbol{T} - \boldsymbol{T}_h)\boldsymbol{f}$ into $(\boldsymbol{\mathcal{S}} - \boldsymbol{\mathcal{S}}_h)\boldsymbol{u}_T$ 
and $\boldsymbol{\mathcal{S}}_h(\boldsymbol{u}_T - \boldsymbol{u}_{h,T})$, and treat them separately.

\begin{lemma}
\label{L:T1}
Let $\boldsymbol{u}\in \boldsymbol{\mathcal{Z}}(\Omega)$ be the solution of \eqref{E:u} with $\boldsymbol{f}\in \boldsymbol{H}(\normalfont{\text{div}}_{\Gamma}^0;\Gamma)$. 
Then, for $0\leqslant t<s_3$, where $s_3$ is given by \eqref{s3}, it holds that
\begin{align*}
\Vert (\boldsymbol{\mathcal{S}} - \boldsymbol{\mathcal{S}}_h)\boldsymbol{u}_T\Vert_{0,\Gamma} \leqslant Ch^t \Vert \boldsymbol{f}\Vert_{0,\Gamma}.
\end{align*}
\end{lemma}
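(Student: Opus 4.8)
The plan is to recognize $(\boldsymbol{\mathcal{S}}-\boldsymbol{\mathcal{S}}_h)\boldsymbol{u}_T$ as the surface curl of the error in a conforming Galerkin approximation of a coercive scalar problem on $\Gamma$, and then to combine C\'ea's lemma with a standard interpolation estimate and the regularity already established in the proof of Lemma~\ref{L:T}.

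First, let $q\in H^1(\Gamma)/\mathbb{C}$ and $q_h\in\partial U_h/\mathbb{C}$ solve \eqref{E:q} and \eqref{E:qh}, respectively, both with datum $\boldsymbol{\mu}=\boldsymbol{u}_T$ (legitimate since $\boldsymbol{u}_T\in\boldsymbol{L}^2_t(\Gamma)$ by Lemma~\ref{L:regul}); then by definition $(\boldsymbol{\mathcal{S}}-\boldsymbol{\mathcal{S}}_h)\boldsymbol{u}_T=\textbf{curl}_{\Gamma}(q-q_h)$. Since $\textbf{curl}_{\Gamma}\psi$ is the rotation of $\nabla_{\Gamma}\psi$ by a right angle about $\boldsymbol{\nu}$, one has $\Vert\textbf{curl}_{\Gamma}\psi\Vert_{0,\Gamma}=\Vert\nabla_{\Gamma}\psi\Vert_{0,\Gamma}$ for every $\psi\in H^1(\Gamma)$, so it suffices to control $\Vert\nabla_{\Gamma}(q-q_h)\Vert_{0,\Gamma}$, i.e. the $H^1(\Gamma)/\mathbb{C}$ error. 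The same identity shows that the sesquilinear form $\langle\textbf{curl}_{\Gamma}\cdot,\textbf{curl}_{\Gamma}\cdot\rangle$ is bounded and, by the Poincar\'e--Wirtinger inequality on the connected surface $\Gamma$, coercive on $H^1(\Gamma)/\mathbb{C}$. As $\partial U_h/\mathbb{C}\subset H^1(\Gamma)/\mathbb{C}$ is conforming and the right-hand side of \eqref{E:qh} is the restriction of that of \eqref{E:q}, problem \eqref{E:qh} is exactly the Galerkin discretization of \eqref{E:q}, and C\'ea's lemma gives
\[
\Vert q-q_h\Vert_{H^1(\Gamma)/\mathbb{C}}\leqslant C\inf_{\psi_h\in\partial U_h/\mathbb{C}}\Vert q-\psi_h\Vert_{H^1(\Gamma)/\mathbb{C}}.
\]

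Next, by \eqref{IE:q} we already know that, for $0\leqslant t<s_3$ with $s_3$ given by \eqref{s3}, $q\in H^{1+t}(\Gamma)/\mathbb{C}$ with $\Vert q\Vert_{H^{1+t}(\Gamma)/\mathbb{C}}\leqslant C\Vert\boldsymbol{f}\Vert_{0,\Gamma}$. Because $\Omega$ is a polyhedron, the triangulation of $\Gamma$ subordinates its flat faces $\Gamma_j$, so the global nodal interpolant $I_hq\in\partial U_h$ is well defined (each $q|_{\Gamma_j}\in H^{1+t}(\Gamma_j)\hookrightarrow C^0(\overline{\Gamma_j})$ since $1+t>1$) and globally continuous (the one-dimensional traces along an edge shared by two faces agree because $q\in H^1(\Gamma)$). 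A standard Lagrange interpolation estimate, requiring only shape regularity and applied face by face (see, e.g., \cite{Ciarlet2016, Monk2003}), yields $\Vert q-I_hq\Vert_{1,\Gamma_j}\leqslant Ch^t\Vert q\Vert_{1+t,\Gamma_j}$ for $0<t\leqslant 1$ and $k=1$; summing over $j$ and passing to the quotient (the $H^1/\mathbb{C}$ seminorm only sees $\nabla_{\Gamma}$) gives $\inf_{\psi_h}\Vert q-\psi_h\Vert_{H^1(\Gamma)/\mathbb{C}}\leqslant Ch^t\Vert q\Vert_{1+t,\Gamma}$. Chaining the three estimates produces $\Vert(\boldsymbol{\mathcal{S}}-\boldsymbol{\mathcal{S}}_h)\boldsymbol{u}_T\Vert_{0,\Gamma}\leqslant Ch^t\Vert\boldsymbol{f}\Vert_{0,\Gamma}$.

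The only delicate point is the interpolation step: $\Gamma$ is merely piecewise flat and $q$ merely piecewise $H^{1+t}$, so one must use this piecewise structure to construct a globally conforming interpolant and to keep the estimate independent of any quasi-uniformity assumption; matching the admissible range $t<s_3\leqslant 1$ with the polynomial degree $k=1$ of $\partial U_h$ is automatic, and the remaining ingredients --- the $\textbf{curl}_{\Gamma}$/$\nabla_{\Gamma}$ isometry, coercivity, and C\'ea's lemma --- are routine.
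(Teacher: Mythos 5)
Your proposal is correct and takes essentially the same route as the paper's proof: identify $(\boldsymbol{\mathcal{S}}-\boldsymbol{\mathcal{S}}_h)\boldsymbol{u}_T=\textbf{curl}_{\Gamma}(q-q_h)$ with $q$, $q_h$ solving \eqref{E:q} and \eqref{E:qh} for the datum $\boldsymbol{u}_T$, apply C\'ea's lemma (the paper invokes the best-approximation property directly in the $\textbf{curl}_{\Gamma}$ seminorm rather than passing through coercivity in $H^1(\Gamma)/\mathbb{C}$), bound the interpolation error by $Ch^t\Vert q\Vert_{H^{1+t}(\Gamma)/\mathbb{C}}$, and conclude with the regularity estimate \eqref{IE:q}. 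Your additional care in constructing the face-by-face conforming interpolant is a more detailed rendering of the paper's $\pi_h^{\Gamma}$ step, not a different argument.
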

\begin{proof}
Let $q\in H^1(\Gamma)/\mathbb{C}$ and $\widehat{q}_h\in \partial U_h/\mathbb{C}$ be the solutions of \eqref{E:q} and \eqref{E:qh} with 
$\boldsymbol{\mu} = \boldsymbol{u}_T$ and $\boldsymbol{\mu}_h = \boldsymbol{u}_T$, respectively.
By Cea's lemma,
%\footnote
%{
%$\Vert q - \pi_h^{\Gamma}q\Vert_{H^1(\Gamma)/\mathbb{C}}\leqslant \Vert \tilde{q} - \pi_h^{\Gamma}\tilde{q}\Vert_{1,\Gamma}$,
%where $\tilde{q}$ is any shift of $q$ by a constant, namely $\tilde{q} = q - c$ for any $c\in \mathbb{C}$. Applying the interpolation error estimate on the last term of the above inequality gives us
%$
%\Vert \tilde{q} - \pi_h^{\Gamma}\tilde{q}\Vert_{1,\Gamma}^2 = \sum_j \Vert \tilde{q}_j - \pi_h^{\Gamma}\tilde{q}_j\Vert_{1,\Gamma_j}^2 \leqslant \sum_{j} C h^{2t}\Vert\tilde{q}_j\Vert^2_{1+t,\Gamma_j} \leqslant C h^{2t} \Vert\tilde{q}\Vert_{1+t,\Gamma}^2,
%$
%where we recall that $\Gamma_j$ are the polygon faces of the polyhedron $\Omega$, $\tilde{q}_j = \tilde{q}|_{\Gamma_j}$. 
%}
\begin{align*}
\Vert \textbf{curl}_{\Gamma}\,q - \textbf{curl}_{\Gamma}\, \widehat{q}_h\Vert_{0,\Gamma} &\leqslant \Vert\textbf{curl}_{\Gamma}\,q - \textbf{curl}_{\Gamma}\,\pi^{\Gamma}_h q\Vert_{0,\Gamma}
\leqslant \Vert q - \pi_h^{\Gamma}q\Vert_{H^1(\Gamma)/\mathbb{C}}\leqslant C h^t\Vert q\Vert_{H^{1+t}(\Gamma)/\mathbb{C}},
\end{align*}
where $\pi^{\Gamma}_h$ stands for the interpolation from $H^1(\Gamma)$ to $\partial U_h$. Using \eqref{IE:q}, we have that
\[
\Vert\textbf{curl}_{\Gamma}\, q - \textbf{curl}_{\Gamma}\, \widehat{q}_h\Vert_{0,\Gamma} \leqslant C h^t \Vert\boldsymbol{f}\Vert_{0,\Gamma},
\]
which is the desired inequality.
\end{proof}

\begin{lemma}
\label{L:T2}
Let $\boldsymbol{u}\in \boldsymbol{\mathcal{Z}}(\Omega)$ and $\boldsymbol{u}_h\in \boldsymbol{\mathcal{Z}}_h$ 
be, respectively, the solutions of \eqref{E:u} and \eqref{E:uh} with the same data $\boldsymbol{f}\in \boldsymbol{H}(\normalfont{\text{div}}_{\Gamma}^0;\Gamma)$. 
Then
\begin{align*}
\Vert \boldsymbol{\mathcal{S}}_h(\boldsymbol{u}_T - \boldsymbol{u}_{h,T})\Vert_{0,\Gamma}
\leqslant C h^{1/2} \Vert \boldsymbol{f}\Vert_{0,\Gamma}.
\end{align*}
\end{lemma}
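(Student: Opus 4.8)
The plan is to bound $\Vert\boldsymbol{\mathcal{S}}_h(\boldsymbol{u}_T-\boldsymbol{u}_{h,T})\Vert_{0,\Gamma}$ by first observing that $\boldsymbol{\mathcal{S}}_h$ is uniformly bounded from $\boldsymbol{L}^2_t(\Gamma)$ to $\boldsymbol{H}(\text{div}^0_\Gamma;\Gamma)$: taking $\psi_h=q_h$ in \eqref{E:qh} gives $\Vert\textbf{curl}_\Gamma\,q_h\Vert_{0,\Gamma}\leqslant\Vert\boldsymbol{\mu}_h\Vert_{0,\Gamma}$, hence $\Vert\boldsymbol{\mathcal{S}}_h\boldsymbol{\mu}_h\Vert_{0,\Gamma}\leqslant\Vert\boldsymbol{\mu}_h\Vert_{0,\Gamma}$ with constant independent of $h$. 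Therefore it suffices to estimate $\Vert\boldsymbol{u}_T-\boldsymbol{u}_{h,T}\Vert_{0,\Gamma}$. The naive approach via the trace theorem would bound this by $\Vert\boldsymbol{u}-\boldsymbol{u}_h\Vert_{1/2+s,\Omega}$, which is not controlled; instead one uses a trace estimate that only costs the curl norm up to a half power of $h$, exploiting that $\boldsymbol{u}-\boldsymbol{u}_h$ (or a suitable surrogate) lies in a discrete space and the curl lives in $W_h$.

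Concretely, I would split $\boldsymbol{u}_T-\boldsymbol{u}_{h,T}=(\boldsymbol{u}-\pi_h^1\boldsymbol{u})_T+(\pi_h^1\boldsymbol{u}-\boldsymbol{u}_h)_T$ where the interpolant $\pi_h^1\boldsymbol{u}$ is well-defined by Lemma~\ref{L:regul_Ku,z} (since $\boldsymbol{u}\in\boldsymbol{H}^{1/2+s}(\Omega)$ and $\textbf{curl}\,\boldsymbol{u}\in\boldsymbol{H}^{1/2}(\Omega)\subset\boldsymbol{H}^{1/2+s}(\Omega)$, and in fact $\textbf{curl}\,\boldsymbol{u}\in W_h$ is not needed — the regularity suffices). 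For the first term, the trace inequality on each boundary face together with the local interpolation estimate \eqref{IE:I-pi} (and its analogue for $\textbf{curl}$) yields $\Vert(\boldsymbol{u}-\pi_h^1\boldsymbol{u})_T\Vert_{0,\Gamma}\leqslant Ch^{s+1/2-1/2}(\dots)$; more carefully, using the scaled trace theorem $\Vert w\Vert_{0,\partial K}^2\leqslant C(h_K^{-1}\Vert w\Vert_{0,K}^2+h_K\Vert\nabla w\Vert_{0,K}^2)$ applied to $w=\boldsymbol{u}-\pi_h^1\boldsymbol{u}$ and combined with \eqref{IE:I-pi}, gives a bound of order $h^{s}$ (or better) times $\Vert\boldsymbol{f}\Vert_{0,\Gamma}$, which is absorbed into $O(h^{1/2})$ when $s\geqslant 1/2$ fails; the genuine bottleneck is the worst case $s<1/2$, but there the regularity $\textbf{curl}\,\boldsymbol{u}\in\boldsymbol{H}^{1/2}$ rescues the curl contribution. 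For the second term, $\boldsymbol{w}_h:=\pi_h^1\boldsymbol{u}-\boldsymbol{u}_h\in V_h$ is a finite element function, so the discrete trace inequality $\Vert\boldsymbol{w}_{h,T}\Vert_{0,\Gamma}\leqslant Ch^{-1/2}\Vert\boldsymbol{w}_h\Vert_{0,\Omega}\leqslant Ch^{-1/2}\Vert\boldsymbol{w}_h\Vert_{\textbf{curl},\Omega}$ applies, and $\Vert\boldsymbol{w}_h\Vert_{\textbf{curl},\Omega}\leqslant\Vert\boldsymbol{u}-\pi_h^1\boldsymbol{u}\Vert_{\textbf{curl},\Omega}+\Vert\boldsymbol{u}-\boldsymbol{u}_h\Vert_{\textbf{curl},\Omega}\leqslant Ch^{1/2}\Vert\boldsymbol{f}\Vert_{0,\Gamma}$ by Theorem~\ref{T:uh} (with $\boldsymbol{f}_h=\boldsymbol{f}$) and Lemma~5.1 of \cite{Bermudez2005}. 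Multiplying, $\Vert\boldsymbol{w}_{h,T}\Vert_{0,\Gamma}\leqslant Ch^{-1/2}\cdot h^{1/2}\Vert\boldsymbol{f}\Vert_{0,\Gamma}=C\Vert\boldsymbol{f}\Vert_{0,\Gamma}$ — which is only $O(1)$, not $O(h^{1/2})$.

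This is the crux of the difficulty, and it is exactly the ``missing half order'' phenomenon flagged in the introduction (cf. \cite{Liu2019,You2019}): the boundary $L^2$ error of $\boldsymbol{u}-\boldsymbol{u}_h$ is a full order worse than the curl error, so one cannot get $h^{1/2}$ from the trace alone. The resolution is that we do not need $\Vert\boldsymbol{u}_T-\boldsymbol{u}_{h,T}\Vert_{0,\Gamma}$ itself but only $\Vert\boldsymbol{\mathcal{S}}_h(\boldsymbol{u}_T-\boldsymbol{u}_{h,T})\Vert_{0,\Gamma}$, and $\boldsymbol{\mathcal{S}}_h$ annihilates surface gradients: writing $\boldsymbol{\mathcal{S}}_h\boldsymbol{\mu}_h=\textbf{curl}_\Gamma\,q_h$ with $q_h$ solving \eqref{E:qh}, the right-hand side $\langle\boldsymbol{\mu}_h,\textbf{curl}_\Gamma\,\psi_h\rangle$ depends on $\boldsymbol{\mu}_h$ only through its surface-vector-curl-tested part. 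More precisely, $\langle\boldsymbol{\mathcal{S}}_h\boldsymbol{\mu}_h,\boldsymbol{\zeta}\rangle$ for test functions $\boldsymbol{\zeta}$ in the relevant space only sees $\text{curl}_\Gamma\,\boldsymbol{\mu}_h$ in a weak sense, and $\text{curl}_\Gamma(\boldsymbol{u}_T-\boldsymbol{u}_{h,T})=\boldsymbol{\nu}\cdot\textbf{curl}(\boldsymbol{u}-\boldsymbol{u}_h)$ which, because $\textbf{curl}\,\boldsymbol{u}\in\boldsymbol{H}^{1/2}(\Omega)$ with $\boldsymbol{\nu}\times\textbf{curl}\,\boldsymbol{u}=\boldsymbol{0}$ and $\textbf{curl}\,\boldsymbol{u}_h\in W_h$, has an $L^2(\Gamma)$ error of the improved order $h^{1/2}$. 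Thus I would rewrite $\Vert\boldsymbol{\mathcal{S}}_h(\boldsymbol{u}_T-\boldsymbol{u}_{h,T})\Vert_{0,\Gamma}$ using a duality/commuting argument to replace the dependence on $\boldsymbol{u}_T-\boldsymbol{u}_{h,T}$ by a dependence on $\text{curl}_\Gamma(\boldsymbol{u}_T-\boldsymbol{u}_{h,T})=\boldsymbol{\nu}\cdot(\textbf{curl}\,\boldsymbol{u}-\textbf{curl}\,\boldsymbol{u}_h)$, and then estimate the latter by $Ch^{1/2}\Vert\boldsymbol{f}\Vert_{0,\Gamma}$ via the scaled trace theorem applied face-by-face to $\textbf{curl}\,\boldsymbol{u}-\pi_h^2\textbf{curl}\,\boldsymbol{u}$ (interpolation error of order $h^{1/2}$ since $\textbf{curl}\,\boldsymbol{u}\in\boldsymbol{H}^{1/2}$) plus the discrete trace inequality applied to $\pi_h^2\textbf{curl}\,\boldsymbol{u}-\textbf{curl}\,\boldsymbol{u}_h\in W_h$, whose $L^2(\Omega)$-norm is $O(h^{1/2})\Vert\boldsymbol{f}\Vert_{0,\Gamma}$ by the curl-norm estimate of Theorem~\ref{T:uh}. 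The main obstacle is making the duality step rigorous — identifying precisely in which space the pairing $\langle\boldsymbol{\mathcal{S}}_h\boldsymbol{\mu}_h,\cdot\rangle$ lives and justifying that $\boldsymbol{\mathcal{S}}_h$ factors through $\text{curl}_\Gamma\,\boldsymbol{\mu}_h$ so that only the normal-curl trace enters — after which the estimates are routine scaled-trace and interpolation bounds.
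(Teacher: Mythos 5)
Your two correct observations are the uniform bound $\Vert\boldsymbol{\mathcal{S}}_h\boldsymbol{\mu}\Vert_{0,\Gamma}\leqslant\Vert\boldsymbol{\mu}\Vert_{0,\Gamma}$ and the fact that $\boldsymbol{\mathcal{S}}_h$ annihilates surface gradients; but the way you deploy the second one does not close the argument. If you convert the estimate into one for $\text{curl}_{\Gamma}(\boldsymbol{u}_T-\boldsymbol{u}_{h,T})=\boldsymbol{\nu}\cdot\textbf{curl}\,(\boldsymbol{u}-\boldsymbol{u}_h)$ and then follow your own recipe, the numbers do not add up: the discrete trace inequality for $\pi_h^2\textbf{curl}\,\boldsymbol{u}-\textbf{curl}\,\boldsymbol{u}_h\in W_h$ costs $h^{-1/2}$, while its $\boldsymbol{L}^2(\Omega)$-norm is only $O(h^{1/2})\Vert\boldsymbol{f}\Vert_{0,\Gamma}$, so that term is $O(1)$; likewise the scaled trace bound for $\textbf{curl}\,\boldsymbol{u}-\pi_h^2\textbf{curl}\,\boldsymbol{u}$ with only $\boldsymbol{H}^{1/2}$ regularity gives $h^{-1/2}\cdot h^{1/2}=O(1)$ per element (and $\boldsymbol{H}^{1/2}$ is exactly the borderline case where the elementwise trace/interpolation estimates you invoke become delicate). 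So your route reproduces precisely the half-order loss you set out to avoid, and the ``duality step'' you yourself flag as the main obstacle — that $\boldsymbol{\mathcal{S}}_h$ factors through $\text{curl}_{\Gamma}\boldsymbol{\mu}$ with an $L^2(\Gamma)$ (rather than a weaker) norm on the data — is never actually carried out. As written, the proposal proves at best $\Vert\boldsymbol{\mathcal{S}}_h(\boldsymbol{u}_T-\boldsymbol{u}_{h,T})\Vert_{0,\Gamma}\leqslant C\Vert\boldsymbol{f}\Vert_{0,\Gamma}$.

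The missing idea is to apply the annihilation property to the \emph{continuous} Helmholtz-type decomposition of the discrete solution rather than to the surface curl of the error. Write $\boldsymbol{u}_h=\boldsymbol{u}_{h,0}+\nabla p^h$ with $\boldsymbol{u}_{h,0}\in\boldsymbol{\mathcal{Z}}(\Omega)$ by Lemma~\ref{L:decomp}; since $(\nabla p^h)_T=\nabla_{\Gamma}p^h$ and $\langle\nabla_{\Gamma}p^h,\textbf{curl}_{\Gamma}\,\psi_h\rangle=0$, one has $\boldsymbol{\mathcal{S}}_h(\boldsymbol{u}_T-\boldsymbol{u}_{h,T})=\boldsymbol{\mathcal{S}}_h(\boldsymbol{u}_T-\boldsymbol{u}_{h,0,T})$. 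The crucial gain is that $\boldsymbol{u}-\boldsymbol{u}_{h,0}\in\boldsymbol{\mathcal{Z}}(\Omega)$, so Lemma~\ref{L:regul} provides the $h$-independent \emph{continuous} trace bound $\Vert\boldsymbol{u}_T-\boldsymbol{u}_{h,0,T}\Vert_{0,\Gamma}\leqslant C\Vert\boldsymbol{u}-\boldsymbol{u}_{h,0}\Vert_{\textbf{curl},\Omega}$, and no discrete trace inequality (hence no factor $h^{-1/2}$) ever enters. Then $\Vert\boldsymbol{u}-\boldsymbol{u}_{h,0}\Vert_{\textbf{curl},\Omega}\leqslant\Vert\nabla p^h\Vert_{0,\Omega}+\Vert\boldsymbol{u}-\boldsymbol{u}_h\Vert_{\textbf{curl},\Omega}$, where $\Vert\nabla p^h\Vert_{0,\Omega}\leqslant Ch^{1/2+s}\Vert\boldsymbol{u}_{h,0}\Vert_{\textbf{curl},\Omega}$ by the interpolation argument of Lemma~\ref{L:Zh} and the second term is $Ch^{1/2}\Vert\boldsymbol{f}\Vert_{0,\Gamma}$ by Theorem~\ref{T:uh}; combined with the uniform bound on $\boldsymbol{\mathcal{S}}_h$ this yields the stated $O(h^{1/2})$ estimate.
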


\begin{proof}
Let $\boldsymbol{u}_h = \boldsymbol{u}_{h,0} + \nabla p^h$ be the decomposition of $\boldsymbol{u}_h$ due to Lemma \ref{L:decomp}. 
We estimate the norm of $\nabla p^h$ following the same procedure as Lemma \ref{L:Zh} and use Theorem \ref{T:uh} to deduce that
\begin{align*}
\Vert \boldsymbol{u} - \boldsymbol{u}_{h,0}\Vert_{\textbf{curl},\Omega} &\leqslant \Vert \nabla p^h\Vert_{0,\Omega} + \Vert \boldsymbol{u} - \boldsymbol{u}_h\Vert_{\textbf{curl},\Omega} \leqslant C h^{1/2+s}\Vert \boldsymbol{u}_{h,0}\Vert_{\textbf{curl},\Omega} + C h^{1/2}\Vert \boldsymbol{f}\Vert_{0,\Gamma}
\\
&\leqslant Ch^{1/2}\Big(\Vert \boldsymbol{u}_h\Vert_{\textbf{curl},\Omega} + \Vert\boldsymbol{f}\Vert_{0,\Gamma} \Big) \leqslant Ch^{1/2} \Vert\boldsymbol{f}\Vert_{0,\Gamma}.
\end{align*}
By the definition of $\boldsymbol{\mathcal{S}}_h$, we have 
\begin{align*}
\Vert\boldsymbol{\mathcal{S}}_h(\boldsymbol{u}_T - \boldsymbol{u}_{h,T})\Vert_{0,\Gamma} = \Vert\boldsymbol{\mathcal{S}}_h(\boldsymbol{u}_T - \boldsymbol{u}_{h,0,T})\Vert_{0,\Gamma} 
&\leqslant \Vert \boldsymbol{u}_T - \boldsymbol{u}_{h,0,T}\Vert_{0,\Gamma} 
\\
&\leqslant C\Vert \boldsymbol{u} - \boldsymbol{u}_{h,0}\Vert_{\textbf{curl},\Omega} \leqslant C h^{1/2}\Vert \boldsymbol{f}\Vert_{0,\Gamma},
\end{align*}
where we have used Lemma \ref{L:regul} for $\boldsymbol{u} - \boldsymbol{u}_{h,0}$.
\end{proof}

Combining Lemmas \ref{L:T1} and  \ref{L:T2}, we obtain the convergence of $\boldsymbol{T}_h$ to $\boldsymbol{T}$.
\begin{theorem}
\label{T:T}
For $0\leqslant t < s_3$, we have
\begin{align*}
\Vert \boldsymbol{T} - \boldsymbol{T}_h\Vert \leqslant Ch^{\min\{1/2,t\}}.
\end{align*}
\end{theorem}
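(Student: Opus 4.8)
The plan is to bound the operator norm $\Vert \boldsymbol{T} - \boldsymbol{T}_h\Vert$ by estimating $(\boldsymbol{T} - \boldsymbol{T}_h)\boldsymbol{f}$ in the $\boldsymbol{L}^2_t(\Gamma)$-norm (which is the norm on $\boldsymbol{H}(\text{div}^0_{\Gamma};\Gamma)$) uniformly over $\boldsymbol{f}$ with $\Vert \boldsymbol{f}\Vert_{0,\Gamma}\leqslant 1$. Given such an $\boldsymbol{f}$, let $\boldsymbol{u}\in \boldsymbol{\mathcal{Z}}(\Omega)$ solve \eqref{E:u} and let $\boldsymbol{u}_h\in \boldsymbol{\mathcal{Z}}_h$ solve \eqref{E:uh} with $\boldsymbol{f}_h = \boldsymbol{f}$. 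By definition, $\boldsymbol{T}\boldsymbol{f} = \boldsymbol{\mathcal{S}}\boldsymbol{u}_T$ and $\boldsymbol{T}_h\boldsymbol{f} = \boldsymbol{\mathcal{S}}_h\boldsymbol{u}_{h,T}$, so the first step is the triangle-inequality splitting
\begin{align*}
\Vert (\boldsymbol{T} - \boldsymbol{T}_h)\boldsymbol{f}\Vert_{0,\Gamma} = \Vert \boldsymbol{\mathcal{S}}\boldsymbol{u}_T - \boldsymbol{\mathcal{S}}_h\boldsymbol{u}_{h,T}\Vert_{0,\Gamma} \leqslant \Vert (\boldsymbol{\mathcal{S}} - \boldsymbol{\mathcal{S}}_h)\boldsymbol{u}_T\Vert_{0,\Gamma} + \Vert \boldsymbol{\mathcal{S}}_h(\boldsymbol{u}_T - \boldsymbol{u}_{h,T})\Vert_{0,\Gamma},
\end{align*}
which is exactly the decomposition announced just before Lemma \ref{L:T1}.

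The second step is to invoke the two lemmas directly. Lemma \ref{L:T1} gives $\Vert (\boldsymbol{\mathcal{S}} - \boldsymbol{\mathcal{S}}_h)\boldsymbol{u}_T\Vert_{0,\Gamma}\leqslant Ch^t\Vert \boldsymbol{f}\Vert_{0,\Gamma}$ for every $0\leqslant t < s_3$, and Lemma \ref{L:T2} gives $\Vert \boldsymbol{\mathcal{S}}_h(\boldsymbol{u}_T - \boldsymbol{u}_{h,T})\Vert_{0,\Gamma}\leqslant Ch^{1/2}\Vert \boldsymbol{f}\Vert_{0,\Gamma}$. Adding these and using $\Vert\boldsymbol{f}\Vert_{0,\Gamma}\leqslant 1$ yields, for $0\leqslant t < s_3$,
\begin{align*}
\Vert (\boldsymbol{T} - \boldsymbol{T}_h)\boldsymbol{f}\Vert_{0,\Gamma} \leqslant Ch^t + Ch^{1/2} \leqslant C h^{\min\{1/2,t\}},
\end{align*}
the last step because for $h$ bounded (say $h\leqslant 1$) the smaller exponent dominates. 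Taking the supremum over $\boldsymbol{f}$ in the unit ball gives $\Vert \boldsymbol{T} - \boldsymbol{T}_h\Vert\leqslant Ch^{\min\{1/2,t\}}$, which is the claim.

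There is essentially no obstacle here, since all the work was done in Lemmas \ref{L:T1} and \ref{L:T2}; the only point requiring a word of care is the matching of norms. One must note that the $\boldsymbol{L}^2_t(\Gamma)$-norm $\Vert\cdot\Vert_{0,\Gamma}$ in which the two lemmas deliver their estimates is precisely the norm with which $\boldsymbol{H}(\text{div}^0_{\Gamma};\Gamma)$ is equipped (as fixed in Section 2), so that the sum of the two bounds genuinely controls the operator norm of $\boldsymbol{T} - \boldsymbol{T}_h$ as a map $\boldsymbol{H}(\text{div}^0_{\Gamma};\Gamma)\to\boldsymbol{H}(\text{div}^0_{\Gamma};\Gamma)$. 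It is also worth recording that all constants are independent of $h$ because the constants in Lemmas \ref{L:T1}, \ref{L:T2} and in Theorem \ref{T:uh} are, and that the half-order loss relative to the optimal rate $h^t$ comes solely through Lemma \ref{L:T2}, i.e.\ through the boundary error $\boldsymbol{u}_T - \boldsymbol{u}_{h,T}$ inheriting only the $\boldsymbol{H}(\textbf{curl};\Omega)$-rate $h^{1/2}$ from Theorem \ref{T:uh}.
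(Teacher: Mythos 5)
Your proposal is correct and is exactly the argument the paper intends: the splitting $(\boldsymbol{T}-\boldsymbol{T}_h)\boldsymbol{f} = (\boldsymbol{\mathcal{S}}-\boldsymbol{\mathcal{S}}_h)\boldsymbol{u}_T + \boldsymbol{\mathcal{S}}_h(\boldsymbol{u}_T-\boldsymbol{u}_{h,T})$ announced before Lemma \ref{L:T1}, followed by the triangle inequality and the two lemmas, with the supremum over the unit ball of $\boldsymbol{H}(\text{div}^0_{\Gamma};\Gamma)$ in the $\Vert\cdot\Vert_{0,\Gamma}$ norm. The paper simply states "Combining Lemmas \ref{L:T1} and \ref{L:T2}" without writing out these details, so your write-up matches its proof.
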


We have shown that $\boldsymbol{T}$ and $\boldsymbol{T}_h$ are compact and $\boldsymbol{T}$ is self-adjoint. 
In addition, $\boldsymbol{T}_h$ converges to $\boldsymbol{T}$ in norm. 
In the following, we apply the Babu\v{s}ka-Osborn theory \cite{Babuska1991} to show the convergence order of the eigenvalues of $\boldsymbol{T}_h$. 
Let $\mu$ be a non-zero
%\footnote
%{
%By the assumption of $\kappa^2$.
%} 
eigenvalue of $\boldsymbol{T}$ with multiplicity $m$ and $\mu_{j,h}, j = 1, \ldots, m,$ be the eigenvalues of $\boldsymbol{T}_h$ that approximate $\mu$. 
For a simple closed curve $\Gamma \subset \rho(\boldsymbol{T})$ which encloses only one eigenvalue $\mu$ of $\boldsymbol{T}$, we denote the projection operator $E(\mu)$ by
\begin{align*}
E(\mu) = \frac{1}{2\pi i}\int_{\Gamma}(z - \boldsymbol{T})^{-1} dz.
\end{align*}
Let $\boldsymbol{f}_1,\dots,\boldsymbol{f}_m\in E := E(\mu)\boldsymbol{H}(\text{div}_{\Gamma}^0;\Gamma) \subset \boldsymbol{H}(\text{div}_{\Gamma}^0;\Gamma)$ 
be a basis of eigenvectors of $\mu$ with $\Vert\boldsymbol{f}_i\Vert_{0,\Gamma} = 1$ for $i = 1,\dots, m$. 
Since $\boldsymbol{T}$ is self-adjoint, $E$ is the span of $\boldsymbol{f}_1,\dots,\boldsymbol{f}_m$.
Define
\begin{equation}\label{hatmuh}
\widehat{\mu}_h = \frac{1}{m}\sum_{j=1}^m \mu_{j,h}.
\end{equation}
The following theorem shows that $\widehat{\mu}_h$ converges to $\mu$ with order at least $\min\{1,2t\}$.
\begin{theorem}
\label{T:mu}
Let $\mu$ be an eigenvalue of $\boldsymbol{T}$ and $\widehat{\mu}_h$ be defined by \eqref{hatmuh}. It holds that, for $0\leqslant t < s_3$,
\begin{align*}
|\mu - \widehat{\mu}_h| \leqslant C h^{\min\{1,2t\}}.
\end{align*}
\end{theorem}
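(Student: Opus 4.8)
The plan is to apply the Babu\v{s}ka--Osborn theory \cite{Babuska1991} to the operator $\boldsymbol{T}$, which is compact and self-adjoint on the Hilbert space $\boldsymbol{H}(\text{div}_{\Gamma}^0;\Gamma)$ by Lemma \ref{L:T} and is approximated in norm by $\boldsymbol{T}_h$ with $\Vert\boldsymbol{T}-\boldsymbol{T}_h\Vert\leqslant Ch^{\min\{1/2,t\}}$ by Theorem \ref{T:T}. Norm convergence plus compactness ensures that exactly $m$ eigenvalues $\mu_{1,h},\dots,\mu_{m,h}$ of $\boldsymbol{T}_h$, counted with multiplicity, converge to $\mu$, so $\widehat{\mu}_h$ in \eqref{hatmuh} is well defined; since the eigenspace of $\boldsymbol{T}^*=\boldsymbol{T}$ for $\mu$ equals $E$, taking $\boldsymbol{f}_1,\dots,\boldsymbol{f}_m$ to be $\boldsymbol{L}^2_t(\Gamma)$-orthonormal the theory gives
\[
|\mu-\widehat{\mu}_h|\;\leqslant\;\frac{1}{m}\sum_{j=1}^m\big|\langle(\boldsymbol{T}-\boldsymbol{T}_h)\boldsymbol{f}_j,\boldsymbol{f}_j\rangle\big|\;+\;C\big\Vert(\boldsymbol{T}-\boldsymbol{T}_h)|_{E}\big\Vert\,\big\Vert(\boldsymbol{T}^*-\boldsymbol{T}_h^*)|_{E}\big\Vert.
\]
The product term is handled immediately: since $(\boldsymbol{T}-\boldsymbol{T}_h)^*=\boldsymbol{T}^*-\boldsymbol{T}_h^*$ and an operator on a Hilbert space has the same norm as its adjoint, Theorem \ref{T:T} yields $\Vert(\boldsymbol{T}-\boldsymbol{T}_h)|_{E}\Vert\,\Vert(\boldsymbol{T}^*-\boldsymbol{T}_h^*)|_{E}\Vert\leqslant\Vert\boldsymbol{T}-\boldsymbol{T}_h\Vert^2\leqslant Ch^{2\min\{1/2,t\}}=Ch^{\min\{1,2t\}}$. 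Thus it remains to prove that $|\langle(\boldsymbol{T}-\boldsymbol{T}_h)\boldsymbol{f}_i,\boldsymbol{f}_j\rangle|\leqslant Ch^{\min\{1,2t\}}$ for all $i,j$, which is where the convergence rate doubles relative to Theorem \ref{T:T}.

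Fix $i,j$ and let $\boldsymbol{u}\in\boldsymbol{\mathcal{Z}}(\Omega)$ and $\boldsymbol{u}_h\in\boldsymbol{\mathcal{Z}}_h$ be the solutions of \eqref{E:u} and \eqref{E:uh} with data $\boldsymbol{f}_i$, so $\boldsymbol{T}\boldsymbol{f}_i=\boldsymbol{\mathcal{S}}\boldsymbol{u}_T$ and $\boldsymbol{T}_h\boldsymbol{f}_i=\boldsymbol{\mathcal{S}}_h\boldsymbol{u}_{h,T}$. Split
\[
\langle(\boldsymbol{T}-\boldsymbol{T}_h)\boldsymbol{f}_i,\boldsymbol{f}_j\rangle=\langle(\boldsymbol{\mathcal{S}}-\boldsymbol{\mathcal{S}}_h)\boldsymbol{u}_T,\boldsymbol{f}_j\rangle+\langle\boldsymbol{\mathcal{S}}_h(\boldsymbol{u}_T-\boldsymbol{u}_{h,T}),\boldsymbol{f}_j\rangle.
\]
For the first term I use an Aubin--Nitsche duality on $\Gamma$. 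As in Lemma \ref{L:T1}, $(\boldsymbol{\mathcal{S}}-\boldsymbol{\mathcal{S}}_h)\boldsymbol{u}_T=\textbf{curl}_{\Gamma}(q-\widehat{q}_h)$, where the Galerkin orthogonality $\langle\textbf{curl}_{\Gamma}(q-\widehat{q}_h),\textbf{curl}_{\Gamma}\psi_h\rangle=0$ holds for all $\psi_h\in\partial U_h/\mathbb{C}$. Since $\boldsymbol{f}_j$ is an eigenfunction, $\boldsymbol{f}_j=\mu^{-1}\boldsymbol{T}\boldsymbol{f}_j=\textbf{curl}_{\Gamma}\,r_j$ with $r_j\in H^{1+t}(\Gamma)/\mathbb{C}$ and $\Vert r_j\Vert_{H^{1+t}(\Gamma)/\mathbb{C}}\leqslant C$, by the regularity estimate \eqref{IE:q} established in the proof of Lemma \ref{L:T}. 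Hence, choosing $\psi_h=\pi_h^{\Gamma}r_j$ and using the Cauchy--Schwarz inequality, the Cea-type bound of Lemma \ref{L:T1} together with \eqref{IE:q}, and the interpolation error of $\pi_h^{\Gamma}$,
\[
\big|\langle(\boldsymbol{\mathcal{S}}-\boldsymbol{\mathcal{S}}_h)\boldsymbol{u}_T,\boldsymbol{f}_j\rangle\big|=\big|\langle\textbf{curl}_{\Gamma}(q-\widehat{q}_h),\textbf{curl}_{\Gamma}(r_j-\pi_h^{\Gamma}r_j)\rangle\big|\leqslant Ch^t\Vert\boldsymbol{f}_i\Vert_{0,\Gamma}\cdot h^t\Vert r_j\Vert_{H^{1+t}(\Gamma)/\mathbb{C}}\leqslant Ch^{2t}.
\]

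For the second term I use a duality argument in $\Omega$. The symmetry of the bilinear form in \eqref{E:qh} shows that $\boldsymbol{\mathcal{S}}_h$ is the $\boldsymbol{L}^2_t(\Gamma)$-orthogonal projection onto $\{\textbf{curl}_{\Gamma}\,\psi_h:\psi_h\in\partial U_h/\mathbb{C}\}$; in particular $\boldsymbol{\mathcal{S}}_h$ is self-adjoint with $\Vert\boldsymbol{\mathcal{S}}_h\Vert\leqslant1$, so $\langle\boldsymbol{\mathcal{S}}_h(\boldsymbol{u}_T-\boldsymbol{u}_{h,T}),\boldsymbol{f}_j\rangle=\langle\boldsymbol{u}_T-\boldsymbol{u}_{h,T},\boldsymbol{\mathcal{S}}_h\boldsymbol{f}_j\rangle$. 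Let $\widetilde{\boldsymbol{w}}$ and $\widetilde{\boldsymbol{w}}_h$ solve \eqref{E:u} and \eqref{E:uh} with the \emph{same} surface-divergence-free data $\boldsymbol{\mathcal{S}}_h\boldsymbol{f}_j\in\boldsymbol{H}(\text{div}_{\Gamma}^0;\Gamma)$. Testing the equation for $\widetilde{\boldsymbol{w}}$ with $\boldsymbol{u}$ and the one for $\widetilde{\boldsymbol{w}}_h$ with $\boldsymbol{u}_h\in V_h$, and recalling that $a$ is Hermitian (as used in the proof of Lemma \ref{L:T}), gives $\langle\boldsymbol{u}_T,\boldsymbol{\mathcal{S}}_h\boldsymbol{f}_j\rangle=a(\boldsymbol{u},\widetilde{\boldsymbol{w}})$ and $\langle\boldsymbol{u}_{h,T},\boldsymbol{\mathcal{S}}_h\boldsymbol{f}_j\rangle=a(\boldsymbol{u}_h,\widetilde{\boldsymbol{w}}_h)$. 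The Galerkin orthogonalities $a(\boldsymbol{u}-\boldsymbol{u}_h,\boldsymbol{v}_h)=0$ and $a(\widetilde{\boldsymbol{w}}-\widetilde{\boldsymbol{w}}_h,\boldsymbol{v}_h)=0$ for all $\boldsymbol{v}_h\in V_h$ (immediate from \eqref{E:u}--\eqref{E:uh}), together with the Hermitian symmetry of $a$, reduce $a(\boldsymbol{u},\widetilde{\boldsymbol{w}})-a(\boldsymbol{u}_h,\widetilde{\boldsymbol{w}}_h)$ to $a(\boldsymbol{u}-\boldsymbol{u}_h,\widetilde{\boldsymbol{w}}-\widetilde{\boldsymbol{w}}_h)$. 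Then, by the boundedness of $a$, Theorem \ref{T:uh} applied twice with matching discrete and continuous data (so the data-error term drops), and $\Vert\boldsymbol{\mathcal{S}}_h\boldsymbol{f}_j\Vert_{0,\Gamma}\leqslant\Vert\boldsymbol{f}_j\Vert_{0,\Gamma}=1$,
\[
\big|\langle\boldsymbol{\mathcal{S}}_h(\boldsymbol{u}_T-\boldsymbol{u}_{h,T}),\boldsymbol{f}_j\rangle\big|\leqslant C\Vert\boldsymbol{u}-\boldsymbol{u}_h\Vert_{\textbf{curl},\Omega}\Vert\widetilde{\boldsymbol{w}}-\widetilde{\boldsymbol{w}}_h\Vert_{\textbf{curl},\Omega}\leqslant Ch^{1/2}\cdot h^{1/2}=Ch.
\]
Adding the two estimates gives $|\langle(\boldsymbol{T}-\boldsymbol{T}_h)\boldsymbol{f}_i,\boldsymbol{f}_j\rangle|\leqslant C(h^{2t}+h)\leqslant Ch^{\min\{1,2t\}}$, and combining with the product term yields the claimed bound $|\mu-\widehat{\mu}_h|\leqslant Ch^{\min\{1,2t\}}$.

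The main obstacle is squeezing the doubled rate out of the consistency term. For the $\Gamma$-contribution one needs the elliptic regularity $r_j\in H^{1+t}(\Gamma)$ of the stream potential of an eigenfunction (a bootstrap using Lemma \ref{L:T}) and the surface Aubin--Nitsche trick. For the $\Omega$-contribution the delicate point is that $\widetilde{\boldsymbol{w}}$ and $\widetilde{\boldsymbol{w}}_h$ must be defined with the same data $\boldsymbol{\mathcal{S}}_h\boldsymbol{f}_j$ rather than $\boldsymbol{f}_j$, so that Galerkin orthogonality applies and the difference collapses to $a(\boldsymbol{u}-\boldsymbol{u}_h,\widetilde{\boldsymbol{w}}-\widetilde{\boldsymbol{w}}_h)$, while the uniform bounds $\boldsymbol{\mathcal{S}}_h\boldsymbol{f}_j\in\boldsymbol{H}(\text{div}_{\Gamma}^0;\Gamma)$ and $\Vert\boldsymbol{\mathcal{S}}_h\Vert\leqslant1$ keep everything controllable. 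The indefiniteness of $a$ is irrelevant here, since only its boundedness, its Hermitian symmetry, and Galerkin orthogonality (which hold without coercivity) are used.
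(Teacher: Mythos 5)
Your proof is correct, and its skeleton is the paper's: the Babu\v{s}ka--Osborn bound \eqref{E:B-O}, with the $O(h)$ part of the consistency term obtained exactly as in the paper's proof by introducing auxiliary continuous and discrete solutions with the \emph{same} data $\boldsymbol{\mathcal{S}}_h\boldsymbol{f}_j\in\boldsymbol{H}(\text{div}_{\Gamma}^0;\Gamma)$ (so the data-error term of Theorem \ref{T:uh} drops) and collapsing the difference to $a(\boldsymbol{u}-\boldsymbol{u}_h,\widetilde{\boldsymbol{w}}-\widetilde{\boldsymbol{w}}_h)$ by Galerkin orthogonality and Hermitian symmetry of $a$. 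You deviate in two places, both legitimately. First, for the consistency term you split the operator error as $(\boldsymbol{\mathcal{S}}-\boldsymbol{\mathcal{S}}_h)\boldsymbol{u}_T+\boldsymbol{\mathcal{S}}_h(\boldsymbol{u}_T-\boldsymbol{u}_{h,T})$ and bound the first piece by surface Galerkin orthogonality tested against the stream potential of the eigenfunction, $\boldsymbol{f}_j=\textbf{curl}_{\Gamma}\,r_j$ with $r_j\in H^{1+t}(\Gamma)/\mathbb{C}$ by \eqref{IE:q}, obtaining $Ch^{2t}$; the paper instead splits the second argument as $\boldsymbol{f}_i=\boldsymbol{\mathcal{S}}_h\boldsymbol{f}_i+(\boldsymbol{f}_i-\boldsymbol{\mathcal{S}}_h\boldsymbol{f}_i)$, uses $\boldsymbol{f}_i-\boldsymbol{\mathcal{S}}_h\boldsymbol{f}_i=(\boldsymbol{\mathcal{S}}-\boldsymbol{\mathcal{S}}_h)\boldsymbol{u}^i_T/\mu$ and pairs it with $(\boldsymbol{T}-\boldsymbol{T}_h)\boldsymbol{f}_j$, getting $Ch^{\min\{1/2,t\}+t}$; both exceed $\min\{1,2t\}$, so the final rate is unchanged (your piece is marginally sharper when $t>1/2$, and your use of the self-adjoint projection property of $\boldsymbol{\mathcal{S}}_h$ is the same fact the paper uses implicitly in Lemma \ref{L:T2} and in identifying $\boldsymbol{T}_h^*$). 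Second, and more substantially, you dispose of the factor $\Vert(\boldsymbol{T}-\boldsymbol{T}_h^*)|_{E}\Vert$ in one line via $\Vert(\boldsymbol{T}^*-\boldsymbol{T}_h^*)|_{E}\Vert\leqslant\Vert(\boldsymbol{T}-\boldsymbol{T}_h)^*\Vert=\Vert\boldsymbol{T}-\boldsymbol{T}_h\Vert\leqslant Ch^{\min\{1/2,t\}}$, which is valid because $\boldsymbol{T}=\boldsymbol{T}^*$ and $\boldsymbol{T}_h^*$ is the Hilbert adjoint in $\boldsymbol{H}(\text{div}_{\Gamma}^0;\Gamma)$; the paper instead estimates $\Vert(\boldsymbol{T}-\boldsymbol{T}_h^*)\boldsymbol{f}_i\Vert_{0,\Gamma}$ explicitly by repeating the decomposition argument of Lemma \ref{L:T2}, arriving at the same $Ch^{\min\{1/2,t\}}$. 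Your shortcut buys a visibly shorter proof at no loss of order; the paper's explicit computation yields only extra information about the adjoint on eigenvectors that the theorem does not require. The only cosmetic caveats are that the nodal interpolant $\pi_h^{\Gamma}r_j$ requires $t>0$ (the same implicit assumption as in Lemma \ref{L:T1}, harmless since the bound is trivial at $t=0$), and that your orthonormal choice of $\boldsymbol{f}_1,\dots,\boldsymbol{f}_m$ is in fact needed to write the diagonal pairing in \eqref{E:B-O}, a point you state more carefully than the paper does.
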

\begin{proof}
By the Babu\v{s}ka-Osborn theory \cite{Babuska1991}, 
\begin{align}
|\mu - \widehat{\mu}_h|\leqslant \frac{1}{m}\sum_{j=1}^m |\langle (\boldsymbol{T} - \boldsymbol{T}_h)\boldsymbol{f}_j,\boldsymbol{f}_j\rangle| + C\Vert (\boldsymbol{T} - \boldsymbol{T}_h)|_{E}\Vert \Vert(\boldsymbol{T} - \boldsymbol{T}_h^*)|_{E}\Vert.
\label{E:B-O}
\end{align}
Let $\boldsymbol{u}^j$ and $\boldsymbol{u}^j_h$ be the solutions of \eqref{E:u} and \eqref{E:uh} with $\boldsymbol{f} = \boldsymbol{f}_j$ and
$\boldsymbol{f}_h = \boldsymbol{f}_j$, respectively.
Let $\widetilde{\boldsymbol{u}}^i$ and $\widetilde{\boldsymbol{u}}^i_h$ be the solutions of \eqref{E:u} and \eqref{E:uh} 
with $\boldsymbol{f} = \boldsymbol{\mathcal{S}}_h\boldsymbol{f}_i$ and $\boldsymbol{f}_h = \boldsymbol{\mathcal{S}}_h\boldsymbol{f}_i$,  respectively. We have that
\begin{align}
a(\boldsymbol{u}^j,\boldsymbol{v}) = \langle\boldsymbol{f}_j,\boldsymbol{v}_T\rangle,\qquad\quad\;\, a(\widetilde{\boldsymbol{u}}^i,\boldsymbol{v}) = \langle\boldsymbol{\mathcal{S}}_h\boldsymbol{f}_i,\boldsymbol{v}_T\rangle,\qquad\;\;\, &\forall \boldsymbol{v}\in \boldsymbol{H}(\textbf{curl};\Omega),
\label{E:u^j}
\\
a(\boldsymbol{u}^j_h,\boldsymbol{v}_h) = \langle\boldsymbol{f}_j,\boldsymbol{v}_{h,T}\rangle,\qquad a(\widetilde{\boldsymbol{u}}^i_h,\boldsymbol{v}_h) = \langle\boldsymbol{\mathcal{S}}_h\boldsymbol{f}_i,\boldsymbol{v}_{h,T}\rangle,\qquad &\forall \boldsymbol{v}_h\in V_h,
\label{E:u^j_h}
\end{align}
Using \eqref{E:u^j}-\eqref{E:u^j_h} and Theorem \ref{T:uh}, we obtain that
\begin{align*}
\vert \langle (\boldsymbol{T} - \boldsymbol{T}_h) \boldsymbol{f}_j,\boldsymbol{\mathcal{S}}_h\boldsymbol{f}_i\rangle\vert &= \vert \langle \boldsymbol{\mathcal{S}}\boldsymbol{u}^j_T - \boldsymbol{\mathcal{S}}_h\boldsymbol{u}^j_{h,T},\boldsymbol{\mathcal{S}}_h\boldsymbol{f}_i\rangle\vert = \vert \langle\boldsymbol{u}_T^j - \boldsymbol{u}_{h,T}^j,\boldsymbol{\mathcal{S}}_h\boldsymbol{f}_i\rangle\vert = \vert a(\boldsymbol{u}^j - \boldsymbol{u}_h^j,\widetilde{\boldsymbol{u}}^i)\vert
\\
&= \vert a(\boldsymbol{u}^j - \boldsymbol{u}^j_h,\widetilde{\boldsymbol{u}}^i - \widetilde{\boldsymbol{u}}^i_h)\vert \leqslant \Vert \boldsymbol{u}^j - \boldsymbol{u}^j_h\Vert_{\textbf{curl},\Omega}\Vert \widetilde{\boldsymbol{u}}^i - \widetilde{\boldsymbol{u}}^i_h\Vert_{\textbf{curl},\Omega} 
\\
&\leqslant Ch^{1/2}\Vert \boldsymbol{f}_j\Vert_{0,\Gamma} \, Ch^{1/2}\Vert \boldsymbol{\mathcal{S}}_h\boldsymbol{f}_i\Vert_{0,\Gamma} \leqslant Ch.
\end{align*}
Notice that $\boldsymbol{f}_i = \boldsymbol{\mathcal{S}}\boldsymbol{u}^i_T/\mu$ and $\boldsymbol{\mathcal{S}}_h\boldsymbol{\mathcal{S}} = \boldsymbol{\mathcal{S}}_h$. 
We apply Theorem \ref{T:T} and Lemma \ref{L:T1} to get
\begin{align*}
\vert \langle (\boldsymbol{T} - \boldsymbol{T}_h) \boldsymbol{f}_j,\boldsymbol{f}_i - \boldsymbol{\mathcal{S}}_h\boldsymbol{f}_i\rangle\vert &\leqslant \Vert (\boldsymbol{T} - \boldsymbol{T}_h)\boldsymbol{f}_j\Vert_{0,\Gamma} \Vert (\boldsymbol{\mathcal{S}} - \boldsymbol{\mathcal{S}}_h)\boldsymbol{u}^i_T/\mu\Vert_{0,\Gamma}
\\
&\leqslant C h^{\min\{1/2,t\}}\Vert \boldsymbol{f}_j\Vert_{0,\Gamma} \,C h^t \Vert \boldsymbol{f}_i\Vert_{0,\Gamma} \\
& \leqslant C h^{\min\{1/2,t\}+t}.
\end{align*}
The above two inequalities imply that
\begin{align*}
\frac{1}{m}\sum_{j=1}^m\vert \langle (\boldsymbol{T}-\boldsymbol{T}_h)\boldsymbol{f}_j,\boldsymbol{f}_j\rangle\vert \leqslant C h^{\min\{1,2t\}}.
\end{align*}
On the other hand, $\Vert (\boldsymbol{T} - \boldsymbol{T}_h)|_E\Vert \leqslant \Vert\boldsymbol{T} - \boldsymbol{T}_h\Vert \leqslant Ch^{\min\{1/2,t\}}$ by Theorem \ref{T:T}. 

For $\Vert(\boldsymbol{T} - \boldsymbol{T}_h^*)|_{E}\Vert$, we apply $\boldsymbol{T} - \boldsymbol{T}_h^*$ on the eigenvector $\boldsymbol{f}_i$. 
Decompose $\widetilde{\boldsymbol{u}}^i_h$ into $\widetilde{\boldsymbol{u}}^i_h = \widetilde{\boldsymbol{u}}^i_{h,0} + \nabla p^h$ 
according to Lemma \ref{L:decomp} and use the arguments of Lemma \ref{L:T2} to deduce
\begin{align*}
&\Vert (\boldsymbol{T} - \boldsymbol{T}_h^*)\boldsymbol{f}_i\Vert_{0,\Gamma} = \Vert \boldsymbol{\mathcal{S}}(\boldsymbol{u}^i_T - \widetilde{\boldsymbol{u}}_{h,T}^i)\Vert_{0,\Gamma} = \Vert \boldsymbol{\mathcal{S}}(\boldsymbol{u}^i_T - \widetilde{\boldsymbol{u}}^i_{h,0,T})\Vert_{0,\Gamma} 
\\
\leqslant\;& \Vert \boldsymbol{u}^i_T - \widetilde{\boldsymbol{u}}^i_{h,0,T}\Vert_{0,\Gamma}
\leqslant C\Vert \boldsymbol{u}^i - \widetilde{\boldsymbol{u}}^i_{h,0}\Vert_{\textbf{curl},\Omega} \leqslant C \Vert \nabla p^h\Vert_{0,\Omega} + C\Vert \boldsymbol{u}^i - \widetilde{\boldsymbol{u}}^i_h\Vert_{\textbf{curl},\Omega}
\\
\leqslant\;& Ch^{1/2+s}\Vert\widetilde{\boldsymbol{u}}^i_{h,0}\Vert_{\textbf{curl},\Omega} + Ch^{1/2}\Vert \boldsymbol{f}_i\Vert_{0,\Gamma} + C\Vert \boldsymbol{f}_i - \boldsymbol{\mathcal{S}}_h\boldsymbol{f}_i\Vert_{0,\Gamma}
\\
\leqslant\;& Ch^{1/2}\Vert \boldsymbol{f}_i\Vert_{0,\Gamma} + C\Vert(\boldsymbol{\mathcal{S}} - \boldsymbol{\mathcal{S}}_h)\boldsymbol{u}^i_T/\mu\Vert_{0,\Gamma} \leqslant C h^{\min\{1/2,t\}}\Vert \boldsymbol{f}_i\Vert_{0,\Gamma}.
\end{align*}
Consequently, $\Vert (\boldsymbol{T} - \boldsymbol{T}_h^*)|_E\Vert \leqslant Ch^{\min\{1/2,t\}}$.
%\footnote
%{
%$\Vert(\boldsymbol{T}-\boldsymbol{T}_h^*)|_E\Vert = \sup_{\boldsymbol{\alpha}}\Vert \sum_{i=1}^m(\boldsymbol{T}-\boldsymbol{T}_h^*)\alpha_i\boldsymbol{f}_i\Vert/\Vert\sum_{i=1}^m\alpha_i\boldsymbol{f}_i\Vert \leqslant \sup_{\boldsymbol{\alpha}} \sum_{i=1}^m\vert \alpha_i\vert \Vert(\boldsymbol{T}-\boldsymbol{T}_h^*)\boldsymbol{f}_i\Vert/|\boldsymbol{\alpha}| \\
%\leqslant (\sum_{i=1}^m\Vert (\boldsymbol{T} - \boldsymbol{T}_h^*)\boldsymbol{f}_i\Vert^2)^{1/2}$.
%}
The $\min\{1,2t\}$ order convergence is obtained by plugging all the estimates into \eqref{E:B-O}.
\end{proof}

Define $\lambda = -\mu^{-1}$, $\lambda_{j,h} = -\mu_{j,h}^{-1}$, $j = 1,\dots,m,$ and 
\begin{align}
\widehat{\lambda}_h = \frac{1}{m}\sum_{j=1}^m \lambda_{j,h}.
\label{E:lambda_h}
\end{align}
We conclude this section with the estimate of $\lambda - \widehat{\lambda}_h$ using Remark 7.3 of \cite{Babuska1991}.
%\footnote
%{
%The inequality (7.12) of \cite{Babuska1991} implies that $|\mu^{-1} - \sum_j\mu_{j,h}^{-1}/m| \leqslant C \sum_{i,j=1}^m|\langle(\boldsymbol{T} - \boldsymbol{T}_h)\boldsymbol{f}_j,\boldsymbol{f}_i\rangle| + \Vert(\boldsymbol{T} - \boldsymbol{T}_h)|_{E}\Vert \Vert (\boldsymbol{T} - \boldsymbol{T}_h^*)_E\Vert$.
%}

\begin{corollary}
\label{T:lambda}
Given an eigenvalue $\lambda$ of \eqref{E:eig} and $\widehat{\lambda}_h$ be defined by \eqref{E:lambda_h}, it holds that, for $0\leqslant t < s_3$,
%\footnote
%{
%The analysis of convergence order is not optimal as the quadratic order is seen from the numerical experiments. The $2-\min\{1,2t\}$ order's gap, roughly speaking, is ``lost'' in deriving the error of $\boldsymbol{\mathcal{S}}_h$, or ``lost'' by the lack of regularity of $\boldsymbol{u}$ or $\textbf{curl}\,\boldsymbol{u}$.
%}
\begin{align*}
|\lambda - \widehat{\lambda}_h| \leqslant C h^{\min\{1,2t\}}.
\end{align*}
\end{corollary}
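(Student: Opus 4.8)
The plan is to reduce everything to Theorem~\ref{T:mu} through the reciprocal relations $\lambda=-\mu^{-1}$ and $\lambda_{j,h}=-\mu_{j,h}^{-1}$, which is precisely the transfer indicated by Remark~7.3 of \cite{Babuska1991}. First I would record that, since $\mu\neq 0$ and $\boldsymbol{T}_h\to\boldsymbol{T}$ in operator norm by Theorem~\ref{T:T}, the approximating eigenvalues $\mu_{1,h},\dots,\mu_{m,h}$ all converge to $\mu$; hence there is $h_0>0$ such that $|\mu_{j,h}|\geqslant |\mu|/2$ for every $j$ and every $h\leqslant h_0$, and the map $z\mapsto -1/z$ is Lipschitz on a neighbourhood of $\mu$ containing all the $\mu_{j,h}$.

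Next I would expand
\begin{align*}
\widehat{\lambda}_h-\lambda
=\frac{1}{m}\sum_{j=1}^m\Big(\frac{1}{\mu}-\frac{1}{\mu_{j,h}}\Big)
=\frac{1}{\mu^2}\,(\widehat{\mu}_h-\mu)-\frac{1}{m}\sum_{j=1}^m\frac{(\mu_{j,h}-\mu)^2}{\mu^2\,\mu_{j,h}},
\end{align*}
using the elementary identity $\mu^{-1}\mu_{j,h}^{-1}=\mu^{-2}-\mu^{-2}\mu_{j,h}^{-1}(\mu_{j,h}-\mu)$. The first term is bounded by $|\mu|^{-2}|\widehat{\mu}_h-\mu|\leqslant Ch^{\min\{1,2t\}}$ directly from Theorem~\ref{T:mu}. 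For the second term I would invoke the individual eigenvalue estimate of the Babu\v{s}ka--Osborn theory: since $\boldsymbol{T}$ is self-adjoint (Lemma~\ref{L:T}), the eigenvalue $\mu$ has ascent one, so $|\mu-\mu_{j,h}|\leqslant C\Vert(\boldsymbol{T}-\boldsymbol{T}_h)|_E\Vert\leqslant C h^{\min\{1/2,t\}}$ by Theorem~\ref{T:T}. Squaring and using $|\mu_{j,h}|\geqslant|\mu|/2$, the second term is $\leqslant C h^{2\min\{1/2,t\}}=C h^{\min\{1,2t\}}$. Adding the two contributions yields the claimed bound for $0\leqslant t<s_3$.

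The delicate point is exactly this quadratic remainder. Theorem~\ref{T:mu} only controls the \emph{averaged} error $\widehat{\mu}_h-\mu$ at the rate $h^{\min\{1,2t\}}$, whereas the individual $\mu_{j,h}$ converge only at the slower rate $h^{\min\{1/2,t\}}$, so a naive termwise estimate for $\widehat{\lambda}_h-\lambda$ would lose half an order. What saves the argument is that the nonlinearity $z\mapsto -1/z$ produces a remainder that is quadratic in $\mu_{j,h}-\mu$, and $\big(h^{\min\{1/2,t\}}\big)^2=h^{\min\{1,2t\}}$ matches the target order exactly; the single fact that must be checked with care is that the individual eigenvalue estimate with ascent one applies, which rests on the self-adjointness of $\boldsymbol{T}$. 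Equivalently, one may simply cite Remark~7.3 of \cite{Babuska1991}, which packages this computation.
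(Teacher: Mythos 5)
Your proof is correct and takes essentially the same route as the paper: the paper establishes the corollary simply by invoking Remark~7.3 of \cite{Babuska1991}, and your reciprocal expansion—with the linear term controlled by Theorem~\ref{T:mu} and the quadratic remainder controlled by the individual ascent-one eigenvalue bound $|\mu-\mu_{j,h}|\leqslant C\Vert(\boldsymbol{T}-\boldsymbol{T}_h)|_{E}\Vert\leqslant Ch^{\min\{1/2,t\}}$ from Theorem~\ref{T:T}—is precisely the computation that remark packages. No gaps.
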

 
%Note here that if we remove the assumption that $\epsilon_r$ is real from Theorem \ref{T:mu} and \ref{T:lambda}, only $\min\{1/2,t\}$-order convergence can be obtained through our arguments.

\section{Numerical results}
In this section we present some numerical examples. 
We show the computed eigenvalues of both
\begin{align}
a(\boldsymbol{u}_h,\boldsymbol{v}_h) = -\lambda_h\langle \boldsymbol{\mathcal{S}}_h\boldsymbol{u}_{h,T},\boldsymbol{v}_{h,T}\rangle,\qquad \forall \boldsymbol{v}_h\in V_h,
\label{E:eig_u_h}
\end{align}
and 
\begin{align}
a(\boldsymbol{u}^+_h,\boldsymbol{v}_h) = -\lambda^+_h\langle \boldsymbol{\mathcal{S}}^+_h\boldsymbol{u}^+_{h,T},\boldsymbol{v}_{h,T}\rangle,\qquad \forall \boldsymbol{v}_h\in V_h.
\label{E:eig_u+_h}
\end{align}
Write $\boldsymbol{x}<a$ if $x_1<a, x_2<a$ and $x_3 < a$. 
Consider three different domains including the unit cube $\Omega_1 = \{\boldsymbol{x}\in \mathbb{R}^3\,|\, 0< \boldsymbol{x}< 1\}$, the ``L-shaped'' domain $\Omega_2 = \Omega_1 \cap \{\boldsymbol{x}\in \mathbb{R}^3\,|\, 1/2 \leqslant \boldsymbol{x} \leqslant 1\}^c$ and the unit ball $\Omega_3 = \{\boldsymbol{x}\in \mathbb{R}^3\,|\, |\boldsymbol{x}| < 1\}$. 
The tetrahedral meshes are generated by Gmsh. By the definition of $\boldsymbol{\mathcal{S}}_h$ in \eqref{E:Sh}, the equation \eqref{E:eig_u_h} becomes
\begin{align*}
a(\boldsymbol{u}_h,\boldsymbol{v}_h) &= -\lambda_h\langle \textbf{curl}\, q_h,\boldsymbol{v}_{h,T}\rangle, \qquad\qquad\;\;\, \forall \boldsymbol{v}_h\in V_h,
\\
0 &= \langle \textbf{curl}\, q_h - \boldsymbol{u}_{h,T},\textbf{curl}\, \psi_h\rangle,\qquad \forall \psi_h\in \partial U_h/\mathbb{C}.
\end{align*}
We use the linear edge element of the first family for $\boldsymbol{u}_h$, $\boldsymbol{v}_h$ and the linear Lagrange element for $q_h$, $\psi_h$. 
%To make $q_h \in \partial U_h/\mathbb{C}$, we simply let $q_h(\textbf{v}_0) = 0$ for some fixed vertex $\textbf{v}_0$.
%\footnote
%{
%It seems that different choices of $\textbf{v}_0$ make no difference to the result.
%} 
Denote by $\mathsf{u}$ and $\mathsf{q}$ the column vectors of the unknown coefficients of $\boldsymbol{u}_h$ and $q_h$, respectively,
and by $\mathsf{u_b}$ and $\mathsf{u_i}$ the parts of $\mathsf{u}$ that belong to the boundary and interior degrees of freedom, respectively. 
Then the matrix form of \eqref{E:eig_u_h} reads
\begin{align}
\left(
\begin{matrix}
\mathsf{A_{ii}}& \mathsf{A_{ib}}& \mathsf{O}
\\
\mathsf{A_{bi}}& \mathsf{A_{bb}}& \mathsf{O}
\\
\mathsf{O}& \mathsf{O}& \mathsf{O}
\end{matrix}
\right)
\left(
\begin{matrix}
\mathsf{u_i}
\\
\mathsf{u_b}
\\
\mathsf{q}
\end{matrix}
\right)
= -\lambda_h \left(
\begin{matrix}
\mathsf{O}& \mathsf{O}& \mathsf{O}
\\
\mathsf{O}& \mathsf{O}& \mathsf{B_b}
\\
\mathsf{O}& \mathsf{B_b^{\top}}& -\mathsf{M_{H_1}}
\end{matrix}
\right)
\left(
\begin{matrix}
\mathsf{u_i}
\\
\mathsf{u_b}
\\
\mathsf{q}
\end{matrix}
\right),
\label{E:matrix}
\end{align}
where the subscripts $\mathsf{_b}$ and/or $\mathsf{_i}$ stand for the interior or boundary indices. 
Note that $\mathsf{B_i}$ (which represents $\langle \textbf{curl}\, q_h,\boldsymbol{v}_{h,T}\rangle$ with interior bases $\boldsymbol{v}_h$) 
equals the zero matrix due to $\boldsymbol{v}_{h,T} = \boldsymbol{0}$ for interior bases $\boldsymbol{v}_h$ (see, e.g., Lemma 5.35 of \cite{Monk2003}). 
To use only the boundary degrees of freedom, we can write \eqref{E:matrix} as
\begin{align}
(\mathsf{A_{bb}} - \mathsf{A_{bi}}\mathsf{A_{ii}}^{\!\!-1}\mathsf{A_{ib}})\mathsf{u_b} = -\lambda_h \mathsf{B_b}\mathsf{M_{H_1}}^{\!\!\!-1}\mathsf{B_b^{\top}}\mathsf{u_b}.
\label{E:matrix2}
\end{align}
However, since \eqref{E:matrix2} contains the inverse of a matrix, we solve \eqref{E:matrix} rather than \eqref{E:matrix2}.
%\footnote
%{
%However, the inverse matrix will require more storage, so we stick with the full system and discard all the zero and infinite eigenvalues thus introduced. Is the full system reliable, namely can we make sure that the full system won't produce other undesirable eigenvalues?
%}

In the same way, the matrix form for the alternative eigenvalue problem \eqref{E:eig_u+_h} reads
\begin{align*}
\left(
\begin{matrix}
\mathsf{A_{ii}}& \mathsf{A_{ib}}& \mathsf{O}
\\
\mathsf{A_{bi}}& \mathsf{A_{bb}}& \mathsf{O}
\\
\mathsf{O}& \mathsf{O}& \mathsf{O}
\end{matrix}
\right)
\left(
\begin{matrix}
\mathsf{u_i}
\\
\mathsf{u_b}
\\
\mathsf{q}
\end{matrix}
\right)
= -\lambda^+_h \left(
\begin{matrix}
\mathsf{O}& \mathsf{O}& \mathsf{O}
\\
\mathsf{O}& \mathsf{M_{curl,bb}}& \mathsf{C_b}
\\
\mathsf{O}& \mathsf{C_b^{\top}}& \mathsf{M_{H_1}}
\end{matrix}
\right)
\left(
\begin{matrix}
\mathsf{u_i}
\\
\mathsf{u_b}
\\
\mathsf{q}
\end{matrix}
\right).
\end{align*}
And the corresponding compact form is
\begin{align*}
(\mathsf{A_{bb}} - \mathsf{A_{bi}}\mathsf{A_{ii}}^{\!\!-1}\mathsf{A_{ib}})\mathsf{u_b} = -\lambda^+_h (\mathsf{M_{curl,bb}} - \mathsf{C_b}\mathsf{M_{H_1}}^{\!\!\!-1}\mathsf{C_b^{\top}})\mathsf{u_b}.
\end{align*}

We show the average of the computed eigenvalues (and the convergence order) for the unit cube and the L-shaped domain
in Table \ref{t:cube} and Table \ref{t:L}, respectively. 
Since the exact eigenvalues of $\Omega_1$ and $\Omega_2$ are unknown, we use the relative errors and define the convergence order by
\begin{align*}
r_{\ell} = -\log\frac{|\lambda_{h_{\ell}}-\lambda_{h_{\ell-1}}|}{|\lambda_{h_{\ell-1}}-\lambda_{h_{\ell-2}}|}\left/\log\frac{\sqrt{N_{\ell}}}{\sqrt{N_{\ell-1}}}\right.,
\end{align*}
where $N_{\ell} \propto h_{\ell}^{-2}$ is the number of the edges on the boundary. 
\begin{table}[htp]
\centering
\begin{tabular}{ccc|c|c|c|c}
\hline
&&$N_{\ell}$& 360 & 1656 & 6174 & 23868
\\
\hline
\hline
Avg.\! of &$\lambda_{j,h_{\ell}}$ &$j = 1,2,3$&     -2.3373 &  -2.2184 &  -2.1840 (1.89)  & -2.1747 (1.92)
\\
\hline
&$\lambda^+_{j,h_{\ell}}$ & & -2.2288 &  -2.1862 &  -2.1747 (1.98) &  -2.1722 (2.32)
\\
\hline
\hline
&$\lambda_{j,h_{\ell}}$ &$j = 4,5$&     -3.0413 &  -2.6891 &  -2.6082 (2.24) &  -2.5875 (2.01)
\\
\hline
&$\lambda^+_{j,h_{\ell}}$ && -2.7418 &  -2.6199 &  -2.5893 (2.10)&  -2.5826 (2.25)
\\
\hline
\hline
&$\lambda_{j,h_{\ell}}$ &$j = 6,7,8$&   -6.5322 &  -5.5094 &  -5.1086 (1.42) &  -4.9932 (1.84)
\\
\hline
&$\lambda^+_{j,h_{\ell}}$ &&  -5.6449 &  -5.1702 &  -5.0162 (1.71)&  -4.9693 (1.76)
\\
\hline
\end{tabular}
\caption{Average of computed eigenvalues for $\Omega_{1}$ and the convergence orders.}
\label{t:cube}
\end{table}

\begin{table}[htp]
\centering
\begin{tabular}{ccc|c|c|c|c}
\hline
&&$N_{\ell}$& 405   &     1584   &     6183  &     24168
\\
\hline
\hline
Avg.\! of &$\lambda_{j,h_{\ell}}$ &$j = 1$&     -1.3769 &  -1.2488 &  -1.1799 (0.91) &  -1.1537 (1.41)
\\
\hline
&$\lambda^+_{j,h_{\ell}}$ & & -1.2714  & -1.2117 &  -1.1696 (0.52) &  -1.1505 (1.15)
\\
\hline
\hline
&$\lambda_{j,h_{\ell}}$ &$j = 2,3$&     -2.5634 &  -2.3926 &  -2.3381 (1.68)&  -2.3217 (1.76)
\\
\hline
&$\lambda^+_{j,h_{\ell}}$ && -2.3906  & -2.3420 &  -2.3237 (1.43)&  -2.3178 (1.67)
\\
\hline
\hline
&$\lambda_{j,h_{\ell}}$ &$j = 4$&-3.9772 &  -3.3877 &  -3.2077 (1.74)&  -3.1562 (1.83)
\\
\hline
&$\lambda^+_{j,h_{\ell}}$ &&  -3.2803 &  -3.2001 &  -3.1584 (0.96)&  -3.1420 (1.36)
\\
\hline
\end{tabular}
\caption{Average of computed eigenvalues for $\Omega_{2}$ and the convergence orders.}
\label{t:L}
\end{table}

%We see from these tables that both the eigenvalues of \eqref{E:eig_u_h} and \eqref{E:eig_u+_h} converge from below. 
For the unit cube, the convergence orders of the first two eigenvalues are approximately two, which is optimal.
%\footnote
%{
%We are not sure whether the deficiency of order for the third cluster of eigenvalues owes to that the mesh is not sufficiently small to see the expected full order, or the order is actually not full due to the lack of regularity.
%}
While for the ``L-shaped'' case, because of the singularity of the domain, the convergence orders are deteriorated. 
Among all the eigenvalues the converging order of the first one is the lowest. 
% It probably because the regularity of the first eigenfunction is the worst and depicts the singularity of the domain. 
This phenomenon is consistent with standard results for elliptic eigenvalue problems on reentrant domains.
%\footnote
%{
%Is it? \textcolor{red}{yes!}.
%}

For $\Omega_3$, since the exact eigenvalues $\lambda_*$'s are given in \cite{Camano2017}, the convergence order is defined as
\begin{align*}
r_{\ell} = -\log\frac{|\lambda_{h_{\ell}}-\lambda_*|}{|\lambda_{h_{\ell-1}}-\lambda_*|}\left/\log\frac{\sqrt{N_{\ell}}}{\sqrt{N_{\ell-1}}}\right..
\end{align*}

\begin{table}[htp]
\centering
\begin{tabular}{ccc|c|c|c|c}
\hline
&&$N_{\ell}$& 597    &    3276   &     9431   &    21185
\\
\hline
\hline
Avg.\! of &$\lambda_{j,h_{\ell}}$ &$j = 1,\dots,3$& -1.2034  &  -1.1185 (1.96) &  -1.1016 (2.05)&  -1.0956 (2.08)
\\
\hline
&$\lambda^+_{j,h_{\ell}}$ & & -1.1934 &  -1.1163 (1.94)&  -1.1007 (2.05)&  -1.0951 (2.08)
\\
\hline
\hline
&$\lambda_{j,h_{\ell}}$ &$j = 4,\dots,8$&  -2.7631 &  -2.4809 (2.04)&  -2.4293 (2.02)&  -2.4106 (2.09)
\\
\hline
&$\lambda^+_{j,h_{\ell}}$ && -2.5370 &  -2.4298 (1.85)&  -2.4073 (1.93)&  -2.3985 (2.12)
\\
\hline
\hline
&$\lambda_{j,h_{\ell}}$ &$j = 36,\dots,48$&  -24.1948 &  -9.5396 (2.67)&  -7.9114 (2.16)&  -7.3732 (2.09)
\\
\hline
&$\lambda^+_{j,h_{\ell}}$ &&  -9.8043 &  -7.8168 (1.52)&  -7.2672 (1.75)&  -7.0371 (1.92)
\\
\hline
\end{tabular}
\caption{Average of computed eigenvalues for $\Omega_{3}$ and the convergence orders.}
\label{t:ball}
\end{table}
Similar results are observed in Table \ref{t:ball}. 
The convergence orders are all approximately two, which is optimal.

\section{Conclusion and Future Work}
In this paper, we propose a finite element method for a Maxwell's equation with surface-divergence-free Neumann data. 
The discrete compactness property of the edge element spaces is proved and used to derive the error estimate. 
Furthermore, we show the convergence of a finite element method for the modified Maxwell's Stekloff eigenvalue problem. 

The convergence order we have proved is suboptimal, which is partially owing to the lack of sharp regularity results.
We plan to investigate the possibility to improve the order.
Another interesting problem is the error estimate for the finite element method using \eqref{E:eig_u+_h} proposed in \cite{Camano2017}. 
The numerical examples suggest that this method converges and possesses correct convergence order, which makes it worthwhile for further study. 

\paragraph{Acknowledgements}
The research of B. Gong is supported partially by China Postdoctoral Science Foundation Grant 2019M650460. The research of X. Wu is supported partially by the NSFC grants 11971120 and 91730302, and by Shanghai Science and Technology Commission Grant 17XD1400500.

\end{document}